\documentclass[11pt,reqno]{amsart}
\usepackage{fullpage}
\usepackage{xcolor}
\usepackage[T1]{fontenc}                                   
\usepackage{amsfonts}
\usepackage[utf8]{inputenc}                               
\usepackage{comment}                                       
\usepackage{mparhack}                                      
\usepackage{amsmath,amssymb,amsthm,mathrsfs,eucal}                      

\usepackage{mathtools}
\usepackage{booktabs}                                                                          
\usepackage{graphicx,subfig}                                                                
\usepackage{wrapfig}                                                                            
\usepackage[bookmarks=true,colorlinks=true, citecolor=blue]{hyperref}                      
\usepackage{bm}     

\usepackage[normalem]{ulem}

\usepackage{dsfont}
\usepackage{esint}

\newtheorem{thm}{Theorem}[section]
\newtheorem{prop}{Proposition}[section]
\newtheorem{lemma}{Lemma}[section]
\newtheorem{cor}{Corollary}[section]

\theoremstyle{definition}
\newtheorem{definition}{Definition}[section]

\newtheorem{rem}{Remark}[section]

\DeclarePairedDelimiter{\abs}{\lvert}{\rvert}

\DeclarePairedDelimiter{\norm}{\lVert}{\rVert}
\newcommand{\R}{\mathbb{R}}
\newcommand{\N}{\mathbb{N}}
\newcommand{\de}{\partial}
\newcommand{\intd}{\int_{\R^d}}
\renewcommand{\d}{\,\text{d}}

\newcommand{\calP}{\mathcal{P}}

\newcommand{\calS}{\mathcal{S}}

\newcommand{\calW}{\mathcal{W}}
\newcommand{\calF}{\mathcal{F}}
\newcommand{\calG}{\mathcal{G}}
\newcommand{\calH}{\mathcal{H}}
\newcommand{\calD}{\mathcal{D}}

\title{Two-species system with nonlocal interactions driven by Riesz potentials}

\author{Simone Fagioli \and Valeria Iorio}

\begin{document}

\address{Simone Fagioli - DISIM - Department of Information Engineering, Computer Science and Mathematics, University of L'Aquila, Via Vetoio 1 (Coppito)
67100 L'Aquila (AQ) - Italy}
\email{simone.fagioli@univaq.it}

\address{Valeria Iorio - DISIM - Department of Information Engineering, Computer Science and Mathematics, University of L'Aquila, Via Vetoio 1 (Coppito)
67100 L'Aquila (AQ) - Italy}
\email{valeria.iorio1@univaq.it}

\keywords{Nonlocal interaction, fractional Laplacian operator, measure solutions, many-species system} 
\subjclass[2020]{35R11; 35A15; 35Q92; 45K05; 92C17}

\begin{abstract}
This paper investigates a system of nonlocal continuity equations modelling the interaction of two species coupled through Riesz-type potentials. 
The model incorporates self- and cross-interaction kernels of possibly different fractional orders.
By exploiting optimal transportation theory and the theory of gradient flows in Wasserstein spaces, we establish the existence of weak solutions under singularity assumptions on all interaction potentials, provided the cross-interaction ones satisfy a symmetry condition. Our analysis extends previous results available for either single-species equations or multi-species systems with smoother cross-interaction kernels. 
\end{abstract}

\maketitle

\section{Introduction}
This paper focuses on the analysis of the following system of nonlocal continuity equations
\begin{equation}
\label{eq:main_syst_1}
    \begin{dcases}
        \partial_t \rho(t,x) = \mathrm{div}\bigl(\rho(t,x)\,(\nabla K_s \ast \rho(t,x) + \nabla K_q \ast \eta(t,x))\bigr), \\
        \partial_t \eta(t,x) = \mathrm{div}\bigl(\eta(t,x)\,(\nabla K_r \ast \eta(t,x) + \nabla K_q \ast \rho(t,x))\bigr),
    \end{dcases}
\end{equation}
for $x\in\mathbb{R}^d$ and $t\geq 0$, equipped with a given initial datum $(\rho(0,x),\eta(0,x))=(\rho_0(x),\eta_0(x))$
for $x\in\mathbb{R}^d$. The model \eqref{eq:main_syst_1} describes the evolution of two interacting species $\rho$ and $\eta$ coupled through nonlocal interactions driven by the Riesz-type potentials $K_s$, $K_r$ and $K_q$, that is,
\[
    K_s(x) = C_{d,s}\,\abs{x}^{-d+2s}, \qquad s \in \left(0, \min\left\{1,\frac{d}{2}\right\}\right),
\]
where $C_{d,s}$ is a normalisation constant. In particular, $K_s$ and $K_r$ are the self-interaction potentials, modelling the behaviour between agents of the same species with exponents $s$ and $r$, respectively, while $K_q$ is the cross-interaction potential, modelling the mutual interaction between individuals of different species.

The mathematical modelling of collective motion in multi-agent systems appears in a wide range of disciplines, including biology, ecology, robotics, control theory, sociology, and economics. In the last decades, this topic has received considerable attention, and the modelling of bird flocks, fish schools, and insect swarms has been extensively studied by numerous applied mathematicians; see, for example, the classical references, \cite{boi,mogilner,okubo,topaz}.
Among the various possible modelling approaches, particular attention has been devoted to the discrete formulation, considering a discrete collection of $N$ interacting agents (or particles), whose positions $X_1(t), \ldots, X_N(t) \in \mathbb{R}^d$ evolve in time. In a classical dynamical setting, neglecting both inertial or persistence effects (as they are called in cell biology) , the motion of said particles can be formulated via an appropriate Cauchy problem, namely
\begin{equation*}
        \dot{X}_i(t) = -\sum_{j=1}^N \nabla K\bigl(X_i(t)-X_j(t)\bigr),
\end{equation*}
for given initial positions $\overline{X}_i$, $i=1,\ldots,N$, where $K$ plays the role of the interaction potential, prescribing the interaction law between the agents. Typical choices for the interaction potential in these cases include attractive-repulsive Morse potentials
$K(x)= -C_a e^{-\abs{x}/l_a} + C_r e^{-\abs{x}/l_r}$, (where $l_a$ and $l_r$ denote the attractive and repulsive ranges, respectively), attractive-repulsive Gaussian potentials $K(x)= -C_a e^{-\abs{x}^2/l_a} + C_r e^{-\abs{x}^2/l_r}$,  characteristic functions of a set $A$, $K(x)= \alpha \chi_A(x)$ or Manev potential $K(x)=C_M/\abs{x}^2 + C_c/\abs{x}$, cf.\ \cite{manev_1,manev_2,choi_j,difra_i_schm,difra_iorio}. 

It is well known that the above system of interacting particles possesses a continuum counterpart in the nonlocal interaction equation
\begin{equation*}
    \partial_t \rho = \text{div}\left( \rho \, \nabla K \ast \rho \right),
\end{equation*}
where $\rho \coloneqq \rho(t,x)$ denotes the macroscopic density of particles; see \cite{CaCho21,Carrillo2014,Golse2016,HauJab}. The well-posedness of the previous equation is by now well established in classical $L^p$ frameworks, where \emph{blow-up} phenomena are also shown to occur under singularity assumptions on the interaction potential; see \cite{bertozzi1,bertozzi2,bertozzi4,bertozzi3}. Motivated by these considerations, an optimal transport based theoretical framework has been developed, yielding global well-posedness in the Wasserstein space of probability measures under mild assumptions on $K$. Such conditions still include the most relevant examples that exhibit finite-time blow-up, including the attractive Morse potential. Some of the results in \cite{cdfls} extend the theory previously established in \cite{ags}, which applies to the case of smooth interaction potentials.

A natural extension is then to consider the case of Riesz-type potentials
\[
    K_s(x) = C_{d,s}\,\abs{x}^{-d+2s}, \qquad 
    C_{d,s} = \pi^{-d/2} 2^{-2s}\, \Gamma(d/2 - s)\,/\,\Gamma(s),
\]
where $\Gamma$ is the Euler Gamma function; see \cite{adams1999function}. By applying the Fourier transform, one obtains that the Riesz potential is a Fourier multiplier, and in particular
\[
    \widehat{K_s}(\xi)=\abs{\xi}^{-2s},
\]
with the Fourier transform of a function $f$ given by $\hat{f}(\xi)=\int_{\mathbb{R}^d} e^{-ix\cdot \xi}\, f(x)\,dx$.
In this framework, the nonlocal interaction equation can be rewritten as the fractional porous medium equation, a nonlocal diffusion equation driven by the $s$-fractional Laplacian operator, namely
\begin{equation}
    \label{eq:intro_one_species}
    \partial_t \rho = \mathrm{div}\bigl(\rho\, \nabla (-\Delta)^{-s} \rho\bigr),
\end{equation}
where $(-\Delta)^s$ is the $s$-fractional Laplacian on $\mathbb{R}^d$, defined by
\[
    (\widehat{(-\Delta)^s v})(\xi)=\abs{\xi}^{2s}\, \hat{v}(\xi).
\]
The $s$-fractional diffusion equation has been intensively studied in the last three decades. We first mention the seminal paper \cite{caff_vazq_nonlinear_pm}, where suitable initial data are considered. That result has been extended to $L^1$ positive initial data in \cite{caff_serf_vazq_reg_sol_pm} and to non-negative finite measures as initial data in \cite{Serfaty_vazq}. Classical solutions are investigated in \cite{choi_jeong_fractional}. We also mention \cite{mainini_volzone}, in which the authors consider aggregation driven by the Riesz potential and nonlinear diffusion.

Our approach is inspired by the results and techniques in \cite{lis_main_seg}, where the authors show the rigorous construction of non-negative solutions to \eqref{eq:intro_one_species} as trajectories of a gradient flow. More precisely, they use the energy functional 
\[
\mathcal{F}_s(\rho)=\frac{1}{2}\norm{\rho}^2_{\dot{H}^{-s}(\mathbb{R}^d)} \coloneqq \frac{1}{2(2\pi)^d} \int_{\mathbb{R}^d} \abs{\xi}^{-2s}\,\abs{\hat{\rho}(\xi)}^2\d \xi
= \frac{1}{2}\iint_{\mathbb{R}^d\times \mathbb{R}^d} K_s(x-y)\d\rho(x) \d \rho(y),
\]
showing that a solution to the Cauchy problem associated to \eqref{eq:intro_one_species} can be obtained via the minimising movement scheme applied to the functional $\mathcal{F}_s$ in the metric space of probability measures, in the spirit of the \emph{JKO-scheme} originally introduced in \cite{jko}.

According to the previous discussion, model \eqref{eq:main_syst_1} can be rewritten as
\[
    \begin{dcases}
        \partial_t \rho = \mathrm{div}\left[\, \rho\, \nabla\left((-\Delta)^{-s}\rho + (-\Delta)^{-q}\eta\right)\right], \\
        \partial_t \eta = \mathrm{div}\left[\, \eta\, \nabla\left((-\Delta)^{-r}\eta + (-\Delta)^{-q}\rho\right)\right],
    \end{dcases}
\]
and we may formally associate the energy functional
\begin{equation*}
    \begin{aligned}
        \mathcal{F}(\rho,\eta) = {} & \frac{1}{2}\iint_{\mathbb{R}^d \times \mathbb{R}^d} K_s(x-y)\,d\rho(x)\,d\rho(y)
        + \frac{1}{2}\iint_{\mathbb{R}^d \times \mathbb{R}^d} K_r(x-y)\,d\eta(x)\,d\eta(y) \\
        & + \iint_{\mathbb{R}^d \times \mathbb{R}^d} K_q(x-y)\,d\rho(x)\,d\eta(y).
    \end{aligned}
\end{equation*}

Regarding the multi-species setting, \cite{difrafag} shows that the Gradient Flow theory in Wasserstein spaces developed in \cite{ags, cdfls} can be extended to systems under mildly singular assumptions on all interaction kernels, namely Morse-type singularities together with a \emph{symmetry} requirement on the cross-interaction potentials, meaning that mutual interactions are governed by a single potential. When this symmetry assumption is removed, \cite{difrafag} establishes the existence of weak measure-valued solutions by employing a semi-implicit variant of the \emph{JKO-scheme}. In this case, existence can be proved under mildly singular assumptions on the self-interaction potentials and smoothness assumptions on the cross-interaction kernels. This result was further extended in \cite{choi_fagioli_iorio}, where the authors show the existence of solutions to the system with Riesz-type self-interaction potentials and smooth cross-interaction potentials, relying on an iterative adaptation of the argument introduced in \cite{choi_jeong_fractional}.

The present work advances the results mentioned above by establishing the existence of weak solutions in the case where all interaction potentials are of Riesz-type, under the restriction of symmetric cross-interactions, a setting in which one can recover a formal Gradient Flow structure. To the best of the authors' knowledge, no results in the literature cover the case of singular non-symmetric cross-interaction kernels.

The paper is organised as follows: in Section \ref{sec:preliminaries}, we present some preliminary concepts concerning optimal transportation theory and functional analysis, specifically focusing on Fourier transforms and fractional Sobolev spaces; we also state our main existence result in Theorem \ref{thm:intro_main}. Section \ref{sec:jko} is devoted to the variational framework, where we introduce the energy functional and describe the construction of the approximating sequence via the JKO scheme, as well as some a priori estimates performed at the discrete level. Finally, in Section \ref{sec:convergence}, we provide the detailed proof of the convergence of the scheme to the weak solutions to system \eqref{eq:main_syst_1}.

\section{Preliminaries} \label{sec:preliminaries}

In this section, we collect some preliminary concepts and results needed in the sequel, and we also state our main result in Theorem~\ref{thm:intro_main}. We refer the reader to the classical references \cite{ags, bahouri, santambrogio_book, villani}.

\subsection{Optimal transportation theory}
Let $\calP(\R^d)$ be the space of probability measures on $\R^d$, with $d\geq1$. Given $\mu \in \calP(\R^d)$ and a Borel map $T:\R^d \to \R^n$, the \emph{push-forward measure of $\mu$ through the map $T$} is denoted by $\nu \coloneqq T_\# \mu$ and is defined by
\[
    \nu (A) = \mu (T^{-1} (A)), \qquad \mbox{for all Borel sets $A \subset \R^n$.}
\]
The map $T$ is the \emph{transport map} and pushes the measure $\mu$ to the measure $\nu$. 
Let $\calP_2(\R^d)$ be the space of probability measures on $\R^d$ with finite second moments, that is
\[
    \calP_2 (\R^d) = \left\{ f \in \calP(\R^d) \ : \: \mathrm{m}_2[f] < +\infty \right\},
\]
where
\[
\mathrm{m}_2[f]:=\int_{\R^d} \abs{x}^2 \d f(x).
\]
We equip the space $\calP_2 (\R^d)$ with the $2$-Wasserstein distance defined on $\calP_2 (\R^d) \times \calP_2 (\R^d)$ as
\begin{equation}
    \label{eq:_prelim_dist}
    W^2_2(\mu,\nu) \coloneqq \inf_{\boldsymbol{\gamma} \in \Pi (\mu, \nu)} \left\{ \iint_{\R^d \times \R^d} \abs{x-y}^2 \d \boldsymbol{\gamma} (x,y) \right\},
\end{equation}
where $\Pi (\mu, \nu)$ is the class of transport plans between $\mu$ and $\nu$, namely $\boldsymbol{\gamma} \in \Pi (\mu, \nu)$ is the probability measure on $\R^d \times \R^d$ such that $\pi^1 _\# \boldsymbol{\gamma} = \mu$ and $\pi^2\# \boldsymbol{\gamma} = \nu$, where $\pi^i$ is the projector operator on the $i$-th component of the product space.
We then introduce the class of optimal plans between $\mu$ and $\nu$ denoted by $\Pi_o (\mu, \nu)$ as the minimizers of \eqref{eq:_prelim_dist} and we rewrite the $2$-Wasserstein distance as 
\[
    W^2_2(\mu,\nu) = \iint_{\R^d \times \R^d} \abs{x-y}^2 \d \boldsymbol{\gamma} (x,y) , \qquad \boldsymbol{\gamma} \in \Pi_o (\mu, \nu).
\]
Since in this paper we deal with a two species case, we will work on the product space $\calP(\R^d) \times \calP(\R^d)$. We then define the $2$-Wasserstein distance on the product space as
\[
    \calW_2 ^2 ((\mu_1, \mu_2), (\nu_1, \nu_2))=W_2^2 (\mu_1, \nu_1) + W_2^2 (\mu_2, \nu_2),
\]
for $(\mu_1, \mu_2), (\nu_1, \nu_2) \in \calP_2(\R^d) \times \calP_2 (\R^d)$.

\subsection{Fourier transform and Sobolev Spaces}
Let $\calS (\R^d)$ be the Schwartz space, i.e., the set of smooth functions on $\R^d$ whose derivatives are rapidly decreasing. We denote by $\calS' (\R^d)$ the dual space of tempered distributions, that are continuous and linear functionals on $\calS(\R^d)$. The Fourier transform of $f \in \calS (\R^d)$ is defined by
\[
    \hat{f} (\xi) \coloneqq \int_{\R^d} e^{i x \cdot \xi} f(x)\d x.
\]
We collect below the definitions of fractional and homogeneous fractional Sobolev spaces.
\begin{definition}
Let $s\in \R$. The fractional Sobolev space $H^s (\R^d)$ is the space of tempered distributions $f\in \calS' (\R^d)$ such that $\hat{f} \in L^1_{\text{loc}} (\R^d)$ and 
\[
    \norm{f}^2_{H^s(\R^d)} \coloneqq \frac{1}{(2\pi)^d} \intd (1+\abs{\xi}^2)^s \abs{\hat{f}(\xi)}^2 \d \xi < + \infty.
\]
The homogeneous fractional Sobolev space $\dot{H}^s (\R^d)$ is the set of tempered distributions $f \in \calS' (\R^d)$ such that $\hat{f} \in L^1_{\text{loc}} (\R^d)$ and 
\[
    \norm{f}^2_{\dot{H}^s(\R^d)} \coloneqq \frac{1}{(2\pi)^d} \intd \abs{\xi}^{2s} \abs{\hat{f}(\xi)}^2 \d \xi < + \infty.
\]
\end{definition}

Let $s \in (0,1)$, and $f,g\in\dot{H}^s(\R^d)$. We endow $\dot{H}^s(\R^d)$ with the scalar product
\[
    \langle f, g \rangle_s \coloneqq \frac{1}{(2 \pi)^d}\int_{\R^d} \abs{\xi}^{2s} \hat{f} (\xi) \overline{\hat{g} (\xi)} \d \xi.
\]
where $\displaystyle \overline{\hat{f} (\xi)}= \hat{f} (-\xi)$ denotes the conjugate of $f$. We further recall that, by Plancherel formula, for $f, g \in L^2 (\R^d)$, it holds
        \[
            \intd \hat{f} (\xi) \overline{\hat{g}(\xi)} \d \xi = (2 \pi)^d \intd f(x) g(x) \d x. 
        \]
It is easy to check that if $s_1 < s_2$, then $\norm{f}_{H^{s_1}(\R^d)} \leq \norm{f}_{H^{s_2}(\R^d)}$. Moreover, we have that for any $s>0$, $\norm{f}_{\dot{H}^s(\R^d)} \leq \norm{f}_{H^s(\R^d)}$, and for any $s<0$, $\norm{f}_{H^s(\R^d)} \leq \norm{f}_{\dot{H}^s(\R^d)}$.  
Finally, if $s=0$, then $\norm{f}_{L^2(\R^d)} = \norm{f}_{\dot{H}^s(\R^d)} = \norm{f}_{H^s(\R^d)}$.

\begin{rem}
Given $f,g \in \dot{H}^s (\R^d)$, a straightforward computation shows that the scalar product of $f$ and $g$ in $\dot{H}^s (\R^d)$ can be rephrased as
\begin{equation}
    \label{eq:product_H_s}
    \langle f, g \rangle_s = C_{d,s} \iint_{\R^d \times \R^d} \abs{x-y}^{-d-2s} (f(x)-f(y)) (g(x)-g(y)) \d x \d y.
\end{equation}
Indeed, by using the basic property of Fourier transform, Plancherel formula and recalling that $\displaystyle \hat{K}_s (\xi)=\abs{\xi}^{-2s}$, we have 
\begin{align*}
    \langle f, g \rangle_s & = \frac{1}{(2 \pi)^d} \intd \abs{\xi}^{-2(1-s)} \abs{\xi}^2 \hat{f}(\xi) \overline{\hat{g}(\xi)} \d \xi = \frac{1}{(2 \pi)^d} \intd \abs{\xi}^{-2(1-s)} \left( i \xi \hat{f}(\xi) \right) \left(-i \xi \hat{g}(-\xi) \right) \d \xi \\
    & = \frac{1}{(2 \pi)^d} \intd \hat{K}_{1-s} (\xi) \widehat{\nabla f} (\xi) \overline{\widehat{\nabla g} (\xi)} \d \xi = \frac{1}{(2 \pi)^d} \intd \widehat{K_{1-s} \ast \nabla f} (\xi)  \overline{\widehat{\nabla g} (\xi)} \d \xi \\
    & = \intd (K_{1-s} \ast \nabla f )(x) \nabla g(x) \d x  = - \intd (\Delta K_{1-s} \ast f) (x) g (x) \d x,
\end{align*}
Since $\Delta K_{1-s} = - K_{-s}= -C_{d, -s} \abs{x}^{-d-2s}$, we get the thesis.
\end{rem}

In order to keep the notation to a minimum in what follows, we introduce the spaces
   \[
        X^{s,q}_p\coloneqq \dot{H}^{-s} (\R^d) \cap \dot{H}^{-q} (\R^d) \cap \calP_p (\R^d).
   \]

\subsection{Main result}

The goal of this paper is to prove existence of weak solutions to system \eqref{eq:main_syst_1}, that are introduced in the next Definition.
\begin{definition}[Weak solution to \eqref{eq:main_syst_1}] \label{def:weak_sol} Let $0 < s,r,q < \min \{1, \frac{d}{2} \}$, and $(\rho_0, \eta_0) \in X^{s,q}_2 \times X^{r,q}_2$. An absolute continuous curve $(\rho(t, \cdot), \eta(t,\cdot)) : [0,T] \to \calP_2 (\R^d) \times \calP_2 (\R^d)$ is a \emph{weak solution to system \eqref{eq:main_syst_1}} if it satisfies
\begin{align*}
    & \int_0^T \intd \left( \de_t \varphi - \left( \nabla K_s \ast \rho + \nabla K_q \ast \eta \right) \cdot \nabla \varphi \right) \rho \d x \d t = 0, \\
    & \int_0^T \intd \left( \de_t \chi - \left( \nabla K_r \ast \eta + \nabla K_q \ast \rho \right) \cdot \nabla \chi \right) \eta \d x \d t = 0,
\end{align*}
for all $\varphi, \chi \in C_c^\infty ((0,T) \times \R^d)$.
\end{definition}

We state below the main result of the paper.
\begin{thm} \label{thm:intro_main}
    Let $T>0$ be fixed, and consider $0<s,r,q< \min \{1, \frac{d}{2} \}$, fixed exponent with
    \[
        \max \left\{ \frac{s}{2}, \frac{r}{2} \right\} < q < \min \left\{ \frac{s+1}{2}, \frac{r+1}{2} \right\}.
    \]
    Let $(\rho_0, \eta_0) \in X^{s,q}_2\times X^{r,q}_2$. Then, there exists a weak solution $(\rho, \eta)$ to \eqref{eq:main_syst_1} in the sense of Definition \ref{def:weak_sol}. Moreover, the energy dissipation inequality
    \begin{align*}
        \calF(\rho(t), \eta(t)) & +\frac{1}{2}\int_0^t\int_{\R^d}\abs{\nabla \left( K_s \ast \rho (\sigma) + K_q \ast \eta (\sigma) \right)}^2\rho\d x\d \sigma \\ 
        & \quad +  \frac{1}{2}\int_0^t\int_{\R^d}\abs{\nabla \left( K_r \ast \eta (\sigma) +  K_q \ast \rho (\sigma) \right) }^2\eta \d x\d \sigma \leq\calF(\rho_0, \eta_0).
    \end{align*}
    is satisfied for all $t\in[0,T]$. Finally, if $(\rho_0, \eta_0) \in L^p(\R^d)\times L^p(\R^d)$ for some $p\in [1,\infty]$, then 
    \begin{equation*}
            \norm{\rho (t, \cdot)}_{L^p (\R^d)} + \norm{\eta (t, \cdot)}_{L^p (\R^d)}  \leq  \norm{\rho_0}_{L^p (\R^d)} + \norm{\eta_0}_{L^p (\R^d)}.
    \end{equation*}
    for all $t\in[0,T]$.
\end{thm}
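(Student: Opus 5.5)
We construct the solution through the semi-discrete JKO (minimising movement) scheme for the energy $\calF$ on $\calP_2(\R^d)\times\calP_2(\R^d)$ with the distance $\calW_2$, following \cite{lis_main_seg}. The first step is to show that $\calF$ is bounded below and lower semicontinuous. Write
\[
\calF(\rho,\eta)=\tfrac12\norm{\rho}^2_{\dot H^{-s}}+\tfrac12\norm{\eta}^2_{\dot H^{-r}}+\langle \rho,\eta\rangle_{-q},
\qquad \langle \rho,\eta\rangle_{-q}=\tfrac{1}{(2\pi)^d}\int\abs{\xi}^{-2q}\hat\rho\overline{\hat\eta}\,d\xi .
\]
Since all Riesz kernels are positive, $\calF\ge0$, and lower semicontinuity under narrow convergence follows because each kernel is a monotone limit of continuous bounded kernels.

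For compactness we need a lower bound on $\calF$ in terms of norms. Because $\abs{\hat\rho}\le1$, splitting $\abs{\xi}\le1$ from $\abs{\xi}\ge1$ and using $q>s/2$, $q>r/2$ gives an estimate of the form
\[
\abs{\langle\rho,\eta\rangle_{-q}}\le C+\varepsilon\norm{\rho}^2_{\dot H^{-s}}+\varepsilon\norm{\eta}^2_{\dot H^{-r}}
\]
at high frequency via Young's inequality. This interpolation is where the condition $\max\{s/2,r/2\}<q$ enters. Hence $\calF$ controls $\norm{\rho}_{\dot H^{-s}}$ and $\norm{\eta}_{\dot H^{-r}}$ along the scheme. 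Existence of a minimiser for
\[
(\rho,\eta)\mapsto \calF(\rho,\eta)+\tfrac{1}{2\tau}\calW_2^2\bigl((\rho,\eta),(\rho^n_\tau,\eta^n_\tau)\bigr)
\]
then follows by the direct method, using tightness from second moments and the lower semicontinuity above.

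The discrete a priori estimates are the standard ones. The telescoping bound gives
\[
\calF(\rho^N,\eta^N)+\sum_n\tfrac{1}{2\tau}\calW_2^2\le\calF(\rho_0,\eta_0),
\]
which yields uniform second moments and $1/2$-Hölder equicontinuity in $\calW_2$. A refined Arzelà–Ascoli theorem then gives narrow convergence of the piecewise-constant interpolants to a curve $(\rho,\eta)$.

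The $L^p$ bound and the Euler–Lagrange equation come from the flow interchange technique. We estimate the dissipation of $\calF$ along the heat flow, or more generally along the flow of a displacement-convex entropy $\int U(\rho)+\int U(\eta)$ with $U(z)=z^p$. The derivative of the self terms is
\[
-\int \abs{\xi}^{2-2s}\abs{\hat\rho}^2\le0 .
\]
The cross term contributes $-2\int\abs{\xi}^{2-2q}\mathrm{Re}(\hat\rho\overline{\hat\eta})$. By Cauchy–Schwarz this is bounded by the sum of the two self contributions provided $\abs{\xi}^{2-2q}\le\abs{\xi}^{1-s}\abs{\xi}^{1-r}$, which is where the symmetric form and the ranges on $q$ matter. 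I expect to need a weaker version with a lower-order remainder, obtained by splitting frequencies and using $q<\min\{(s+1)/2,(r+1)/2\}$.

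These estimates give the key regularity bound
\[
\int_0^T \bigl(\norm{\rho}^2_{\dot H^{1-s}}+\norm{\eta}^2_{\dot H^{1-r}}\bigr)\le C ,
\]
together with monotonicity of $\norm{\rho}_{L^p}^p+\norm{\eta}_{L^p}^p$ (summed) along the scheme. Summing yields the stated $L^p$ inequality; I expect the joint sum to be the form that closes, rather than separate bounds for each species. Perturbing the minimiser by $(\mathrm{id}+\epsilon\nabla\varphi)_\#$ gives the discrete weak formulation. The nonlinear terms are written in commutator form as $\iint \nabla K_q(x-y)\cdot(\nabla\varphi(x)-\nabla\varphi(y))\,d\rho\,d\eta$ for the cross term, and as the symmetrised version for the self terms. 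Since $\abs{\nabla K_q(z)}\abs{z}\sim\abs{z}^{-d+2q}$, these are controlled by $\calF$, and the remainder is $O(\tau)$ summed.

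The main obstacle is passing to the limit in the cross term $\iint \nabla K_q(x-y)\cdot(\nabla\varphi(x)-\nabla\varphi(y))\,d\rho_\tau d\eta_\tau$, which narrow convergence alone does not handle. I would first try to prove strong convergence of $\rho_\tau$ in $L^2_tH^{-s'}_{loc}$ and of $\eta_\tau$ similarly, via an Aubin–Lions type argument combining the $L^2_t\dot H^{1-s}$ bound with $\calW_2$ equicontinuity (as in \cite{lis_main_seg}). After that, truncating the kernel near the origin and at infinity reduces the problem to weak–strong pairings. The truncation error is controlled uniformly using $\abs{z}^{-d+2q}$ integrated against $\rho\otimes\eta$, bounded by $\norm{\rho}_{\dot H^{-q'}}\norm{\eta}_{\dot H^{-q'}}$ for a slightly smaller $q'$, interpolated between $\dot H^{-s}$ and $\dot H^{1-s}$. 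Finally, the energy dissipation inequality follows from the discrete energy inequality in De Giorgi form by lower semicontinuity of the slopes $\int\abs{\nabla(K_s*\rho+K_q*\eta)}^2\rho$, using the same strong convergences.
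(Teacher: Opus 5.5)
\textbf{Gap 1: the regularity estimate.} The step meant to give $\int_0^T\bigl(\norm{\rho_\tau}^2_{\dot H^{1-s}}+\norm{\eta_\tau}^2_{\dot H^{1-r}}\bigr)\,dt\le C$ does not close. Your overall architecture is the paper's: JKO in the product space, Ascoli--Arzel\`a, flow interchange with the heat flow and the $z^p$-flow, push-forward perturbation, and De Giorgi interpolation for the energy inequality.

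Along the heat flow the dissipation is $\norm{\rho}^2_{\dot H^{1-s}}+\norm{\eta}^2_{\dot H^{1-r}}+2\langle\rho,\eta\rangle_{1-q}$. To extract the self terms you must absorb the cross term \emph{with a margin}: $2\abs{\xi}^{2-2q}\abs{\hat\rho}\abs{\hat\eta}\le(1-\varepsilon)\bigl(\abs{\xi}^{2-2s}\abs{\hat\rho}^2+\abs{\xi}^{2-2r}\abs{\hat\eta}^2\bigr)$ for $\abs{\xi}\ge R$. This holds iff $q>(s+r)/2$, which is a \emph{lower} bound on $q$. The upper bound $q<\min\{(s+1)/2,(r+1)/2\}$ that you invoke plays no role here, since low frequencies are harmless because $\abs{\hat\rho},\abs{\hat\eta}\le1$. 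The borderline comparison $\abs{\xi}^{2-2q}\le\abs{\xi}^{1-s}\abs{\xi}^{1-r}$ only yields dissipation $\ge0$, not coercivity.

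The hypotheses allow $\max\{s,r\}/2<q<(s+r)/2$, and there the step genuinely fails. For instance, take $s=r$ and $s/2<q<s$, with $\rho\propto f(1+\cos(k\cdot x))$ and $\eta\propto f(1-\cos(k\cdot x))$, where $f\ge0$ is a smooth bump. Energy, entropy and second moments stay bounded, while the dissipation behaves like $C+c(\abs{k}^{2-2s}-\abs{k}^{2-2q})\to-\infty$ with $c>0$.

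Your instinct to estimate the cross term rather than drop it is the right one, but the paper does not supply the missing idea. It drops the term by asserting $\langle\calS^\calH_{1,t}\rho,\calS^\calH_{2,t}\eta\rangle_{1-q}\ge0$. That is false for general nonnegative densities: by \eqref{eq:product_H_s} the pairing equals $-2C\iint f(x)g(y)\abs{x-y}^{-d-2(1-q)}\,dx\,dy<0$ when $f,g\ge0$ have disjoint supports. What actually makes this estimate work is the extra assumption $q>(s+r)/2$.

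The same problem affects your summed $L^p$ monotonicity. It requires $\mathsf{D}^\calG\calF\ge0$ exactly, but the mixed pairings $\langle\eta,\rho^p\rangle_{1-q}$ and $\langle\rho,\eta^p\rangle_{1-q}$ have no sign. Proposition~\ref{prop:product} only covers $\langle f,F(f)\rangle$, although the paper cites it for these. Any lower-order remainder would give a $T$-dependent constant instead of the stated contraction.

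\textbf{Gap 2: the cross term in the weak formulation.} The commutator form $\iint\nabla K_q(x-y)\cdot(\nabla\varphi(x)-\nabla\varphi(y))\,d\rho\,d\eta$ arises only if both components are pushed forward by the same map. That yields only the sum of the two equations with a common test function, whereas Definition~\ref{def:weak_sol} needs independent $\varphi,\chi$. If you perturb $\rho$ alone, the cross term is one-sided: $\iint\nabla K_q(x-y)\cdot\nabla\varphi(x)\,d\rho(x)\,d\eta(y)$. Its kernel behaves like $\abs{x-y}^{-d+2q-1}$, which $\calF$ does not control, so your truncation argument does not reach it.

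The paper handles exactly this term by bounding the velocity in $L^2((0,T)\times\R^d)$: $\norm{\nabla K_q\ast\eta}_{L^2}=\norm{\eta}_{\dot H^{1-2q}}\le\norm{\eta}^{1-\theta}_{\dot H^{-r}}\norm{\eta}^{\theta}_{\dot H^{1-r}}$ with $\theta=1+r-2q\in(0,1)$, and symmetrically with $s$ for $\nabla K_q\ast\rho$. This is where $\max\{s,r\}/2<q<\min\{(s+1)/2,(r+1)/2\}$ enters. It does not enter the coercivity of $\calF$, which is immediate from $\langle\rho,\eta\rangle_{-q}\ge0$. Your Young-inequality step is therefore unnecessary, and as written it would itself need $q\ge(s+r)/2$.

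With that $L^2$ bound, the paper computes the one-sided first variation in Fourier variables (Lemma~\ref{lemma:trasport}). It then passes to the limit by pairing weak $L^2_{t,x}$ convergence of the velocities with strong $L^2_tL^2_{\mathrm{loc}}$ convergence of the densities (Lemma~\ref{lemma:convergence}). Note that this route again rests on the $L^2_t\dot H^{1-s}$ and $L^2_t\dot H^{1-r}$ bounds from Gap 1.
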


The proof of Theorem \ref{thm:intro_main} is divided into several steps and is summarised in Section \ref{sec:convergence}.

\begin{rem}[Choice of $q$]
    The choice  of the range of $q$ will be clear later. We want to underline that, since both $s$ and $r$ belong to $(0,1)$, one can always find a $q$ that satisfies the assumption.
\end{rem}

\begin{rem}[Case with more than two species] The analysis of model \eqref{eq:main_syst_1} can be extend to the case with more than two species. Assume to deal with $N$ species. Denoting by $\rho_i$ the density of the $i$-th species, $K_{s_i}$ the self-interaction potentials and $K_{q_{ij}}$ the cross-interaction ones with the assumption $q_{ij}=q_{ji}$, the system reads
\[
    \de_t \rho_i = \mathrm{div} \left[ \rho_i \nabla \left( (-\Delta)^{-s_i}\rho_i + \sum_{\substack{j=1 \\ j \neq i}}^N (-\Delta)^{-q_{ij}} \rho_j  \right) \right], \qquad i=1, \ldots, N.
\]
In this case, the request on $q_{ij}$ writes
\[
    \max \left\{ \frac{s_i}{2}, \frac{s_j}{2} \right\} < q_{ij} < \min \left\{ \frac{s_i+1}{2}, \frac{s_j+1}{2} \right\},
\]
and it is still satisfied.
\end{rem}

\subsection{Useful inequalities}

In the next proposition we gather some inequalities we will use throughout the paper. Reader can refer to \cite{bahouri}.
\begin{prop}
\label{prop:inequalities}
Let $s_1 < s < s_2$ be given exponents. Then, the following inequalities are true:
\begin{itemize}
    \item[(i)] if $\theta\in(0,1)$ satisfies $s=(1-\theta)s_1 + \theta s_2$, the interpolation inequalities hold
    \begin{equation}
        \label{eq:interp_H_dot}
        \norm{f}_{H^s (\R^d)} \leq \norm{f}^{1-\theta}_{H^{s_1} (\R^d)} \norm{f}^\theta_{H^{s_2} (\R^d)} \qquad \mbox{and} \qquad
        \norm{f}_{\dot{H}^s (\R^d)} \leq \norm{f}^{1-\theta}_{\dot{H}^{s_1} (\R^d)} \norm{f}^\theta_{\dot{H}^{s_2} (\R^d)}.
    \end{equation}
    \item[(ii)] If $\phi \in \calS(\R^d)$ and $f \in H^s (\R^d)$, there is a constant c depending on $d, s$ and $\phi$ such that
    \[
        \norm{\phi f}_{H^s (\R^d)} \leq c \norm{f}_{H^s (\R^d)}.
    \]
    \item[(iii)] If $\phi \in \calS (\R^d)$, and let $\left\{f_n\right\}_{n\in\N}\subset H^{s_2}(\R) $ be such that $\sup_n \norm{f_n}_{H^{s_2}}(\R^d)$ is bounded. Then $\{ \phi f_n \}_{n \in \N}$ is relatively compact in $H^{s_1}(\R)$.
    \item[(iv)] If $s \in (0, \frac{d}{2})$, then, for any $f \in \dot{H}^s (\R^d)$ the fractional Sobolev inequality is satisfied, i.e.,
    \[
        \norm{f}_{L^q(\R^d)} \leq C \norm{f}_{\dot{H}^s (\R^d)},
    \]
    where $q = \frac{2d}{d-2s}$, and the constant $C$ depends on $d$ and $s$.
    \item[(v)] If $f \in \dot{H}^s (\R^d) \cap L^p (\R^d)$, then 
    \begin{equation}
        \label{ineq:intep}
        \norm{f}_{L^q(\R^d)} \leq C^\theta \norm{f}_{L^p(\R^d)}^{1-\theta} \norm{f}_{\dot{H}^s (\R^d)}^\theta,
    \end{equation}
    for $1 \leq p < q < r=\frac{2d}{d-2s}$, where $\theta = \frac{(p-q)r}{(p-r)q}$.
\end{itemize}
\end{prop}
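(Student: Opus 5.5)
The plan is to argue item by item, working on the Fourier side. Every quantity in the statement is a weighted $L^2$ norm of $\hat f$, and the physical-space inequalities (iv)--(v) can be reduced to classical results, namely Hardy--Littlewood--Sobolev and Hölder. As the paper indicates, these are standard facts from \cite{bahouri}; the outline below records why each one holds.

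For (i), write the weight as $(1+\abs{\xi}^2)^s=\bigl((1+\abs{\xi}^2)^{s_1}\bigr)^{1-\theta}\bigl((1+\abs{\xi}^2)^{s_2}\bigr)^{\theta}$. Split $\abs{\hat f}^2=\abs{\hat f}^{2(1-\theta)}\abs{\hat f}^{2\theta}$ and apply Hölder's inequality with exponents $1/(1-\theta)$ and $1/\theta$. This gives the inhomogeneous estimate, and the homogeneous one follows in the same way with weight $\abs{\xi}^{2s}$.

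For (ii), first treat $s\ge 0$ using Peetre's inequality
\[
(1+\abs{\xi}^2)^{s/2}\le 2^{\abs{s}/2}(1+\abs{\xi-\eta}^2)^{\abs{s}/2}(1+\abs{\eta}^2)^{s/2},
\]
applied to $\widehat{\phi f}=(2\pi)^{-d}\hat\phi\ast\hat f$. Young's inequality $L^1\ast L^2\subset L^2$ then gives $c=C\norm{(1+\abs{\cdot}^2)^{\abs{s}/2}\hat\phi}_{L^1}$, which is finite because $\phi\in\calS$. The same argument covers negative $s$ as well, since Peetre's inequality holds for every real $s$.

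For (iii), take a cutoff $\chi_R(\xi)=\mathds{1}_{\abs{\xi}\le R}$. By (ii), the sequence $g_n=\phi f_n$ is bounded in $H^{s_2}$. The high frequencies are uniformly small in $H^{s_1}$, since $\int_{\abs{\xi}>R}(1+\abs{\xi}^2)^{s_1}\abs{\hat g_n}^2\le (1+R^2)^{s_1-s_2}\sup_n\norm{g_n}^2_{H^{s_2}}$. On the low frequencies, $\hat g_n=(2\pi)^{-d}\hat\phi\ast\hat f_n$ is equicontinuous and bounded on $\{\abs{\xi}\le R\}$: indeed $\nabla\hat g_n=(2\pi)^{-d}\nabla\hat\phi\ast\hat f_n$ is controlled by $\norm{(1+\abs{\cdot})^{\abs{s_2}}\nabla\hat\phi}_{L^2}\norm{f_n}_{H^{s_2}}$, again via Peetre. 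Arzelà--Ascoli therefore gives uniform convergence of a subsequence on every ball, and a diagonal argument over $R\to\infty$ produces a Cauchy subsequence in $H^{s_1}$.

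For (iv), write $f=K_{s/2}\ast g$ with $\hat g=\abs{\xi}^{s}\hat f\in L^2$, using that $(-\Delta)^{-s/2}$ is convolution with a multiple of $\abs{x}^{-d+s}$. The Hardy--Littlewood--Sobolev inequality maps $L^2$ to $L^q$ with $1/q=1/2-s/d$, which is exactly $q=2d/(d-2s)$. This is the only genuinely nontrivial input, and it is the step I expect to be the main obstacle if one insisted on a self-contained proof. In that case I would try a Littlewood--Paley decomposition combined with Bernstein's inequality, following \cite{bahouri}, or simply cite HLS.

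Finally, (v) follows from Hölder interpolation between $L^p$ and $L^r$ with $r=2d/(d-2s)$, namely $\norm{f}_{L^q}\le\norm{f}_{L^p}^{1-\theta}\norm{f}_{L^r}^{\theta}$ where $1/q=(1-\theta)/p+\theta/r$. Solving for $\theta$ gives $\theta=\frac{(p-q)r}{(p-r)q}$ as in the statement, and plugging in (iv) for the $L^r$ norm completes the proof.
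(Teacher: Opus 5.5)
Your proposal is correct and follows the standard arguments that the paper simply defers to \cite{bahouri} without giving a proof of its own: H\"older on the Fourier side for (i), Peetre's inequality plus Young for (ii), a high/low frequency split with Arzel\`a--Ascoli on $\widehat{\phi f_n}$ for (iii), Hardy--Littlewood--Sobolev for (iv) (a legitimate substitute for the frequency-truncation proof of the Sobolev embedding in \cite{bahouri}), and H\"older interpolation combined with (iv) for (v). One point is worth stating explicitly: as you prove it, (v) needs $s\in(0,\frac{d}{2})$ so that (iv) is available, and the statement of (v) leaves this hypothesis implicit.
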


We state here a result that easily follows from equation \eqref{eq:product_H_s}, see \cite[Proposition 2.2]{lis_main_seg}.
\begin{prop}
    \label{prop:product}
   Let $s \in (0,1)$, $f \in \dot{H}^s (\R^d)$, and $F:\R^d \to \R^d$ be given functions.
    \begin{itemize}
        \item[(i)] If $F$ is non-decreasing, then $\langle f, F(f) \rangle_s \geq 0$.
        \item[(ii)] If $F$ is non-decreasing and Lipschitz continuous with Lipschitz constant $L$, then it holds that $F \circ f \in \dot{H}^s (\R^d)$ and
        \[
            \langle f, F(f) \rangle_s \leq L \langle f, f\rangle_s, \qquad \langle F(f), F(f) \rangle_s \leq L \langle F(f), f \rangle_s.
        \]
        \item[(iii)] If, in addition, $f$ is non-negative and $p \in (1,+\infty)$, then the Stroock-Varopoulos inequality holds, that is
        \[
            \langle f, f^p \rangle_s \geq \frac{4p}{(p+1)^2} \norm{f^{(p+1)/2}}^2_{\dot{H}^s (\R^d)}.
        \]
    \end{itemize}
\end{prop}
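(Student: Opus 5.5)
The plan is to derive all three statements from the single-integral representation \eqref{eq:product_H_s} of $\langle f,g\rangle_s$, which comes from the preceding Remark and holds for $s\in(0,1)$. I read the statement with $F:\R\to\R$ acting pointwise on scalar values. I would first prove every claim for $f\in\calS(\R^d)$, or for $f$ smooth and compactly supported, and then pass to the limit by density. Throughout I write $k(x,y)=C_{d,s}\abs{x-y}^{-d-2s}$, which is a non-negative symmetric kernel since $C_{d,s}>0$ for $s\in(0,1)$. With this notation,
\[
\langle f,F(f)\rangle_s=\iint k(x,y)\,\bigl(f(x)-f(y)\bigr)\bigl(F(f(x))-F(f(y))\bigr)\d x\d y .
\]

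For (i), monotonicity of $F$ gives $(a-b)(F(a)-F(b))\ge 0$ for all reals $a,b$. The integrand is therefore pointwise non-negative, and the claim follows.

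For (ii), the Lipschitz bound says $\abs{F(a)-F(b)}\le L\abs{a-b}$. Taking $a=f(x)$, $b=f(y)$ and integrating against $k$ gives $\norm{F(f)}_{\dot H^s}^2\le L^2\norm{f}_{\dot H^s}^2<\infty$, so $F\circ f\in\dot H^s$. Monotonicity lets me multiply without changing signs: $(a-b)(F(a)-F(b))\le L(a-b)^2$, and also $(F(a)-F(b))^2\le L(F(a)-F(b))(a-b)$. Integrating these two inequalities against $k$ yields exactly the two claimed estimates.

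For (iii), the key step is the pointwise Stroock--Varopoulos inequality: for $a,b\ge 0$ and $p>1$,
\[
(a-b)(a^p-b^p)\ge \frac{4p}{(p+1)^2}\bigl(a^{(p+1)/2}-b^{(p+1)/2}\bigr)^2 .
\]
I would prove it by writing each difference as an integral over the segment from $b$ to $a$:
\[
a^{(p+1)/2}-b^{(p+1)/2}=\frac{p+1}{2}\int_b^a t^{(p-1)/2}\d t,\qquad a^p-b^p=p\int_b^a t^{p-1}\d t .
\]
Cauchy--Schwarz then gives $\bigl(\int_b^a t^{(p-1)/2}\d t\bigr)^2\le (a-b)\int_b^a t^{p-1}\d t$. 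Substituting the two identities produces the constant $\frac{4p}{(p+1)^2}$. Integrating the pointwise inequality against $k$ proves (iii).

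The main obstacle is justification rather than the algebra. First, \eqref{eq:product_H_s} must hold for general $f\in\dot H^s$, not just smooth $f$. Second, in (iii) the function $f^p$ need not lie in $\dot H^s$, so $\langle f,f^p\rangle_s$ has to be understood as the double integral, which is well defined in $[0,\infty]$ because its integrand is non-negative. I would handle the approximation through Fourier-side density of $\calS$ in $\dot H^s$, using that the Gagliardo-type seminorm equals the Fourier norm on this dense class. Pointwise inequalities pass to the limit by Fatou's lemma applied along an a.e.-convergent subsequence, which is sufficient because every bound above compares non-negative integrands. This is the same approach as in \cite[Proposition 2.2]{lis_main_seg}.
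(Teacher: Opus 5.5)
This is the argument the paper intends: the paper gives no proof beyond saying the statement follows from the double-integral representation \eqref{eq:product_H_s} (via \cite[Proposition 2.2]{lis_main_seg}), and your pointwise inequalities, including the Cauchy--Schwarz derivation of $(a-b)(a^p-b^p)\ge\frac{4p}{(p+1)^2}\bigl(a^{(p+1)/2}-b^{(p+1)/2}\bigr)^2$, are exactly what is needed, with positivity of the kernel as the only input (this holds for the correct constant $\frac{4^s\Gamma(d/2+s)}{2\pi^{d/2}\abs{\Gamma(-s)}}$, but not for the Riesz constant $C_{d,s}$ printed in \eqref{eq:product_H_s}, whose formula turns negative for $d=1$, $s\in(\frac12,1)$). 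One correction to your justification paragraph: Fatou's lemma only gives lower semicontinuity, so it cannot carry the larger sides $L\langle F(f),f\rangle_s$ and $\langle f,f^p\rangle_s$ through a limit of smooth approximants; no approximation is needed, because the norm identity behind \eqref{eq:product_H_s} can be proved directly as an identity in $[0,\infty]$ (Plancherel in $x$ applied to $g(\cdot+h)-g$ for each fixed $h$, then Tonelli in $h$), after which you integrate your pointwise inequalities for $f$ itself.
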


\section{Energy functional and JKO scheme} \label{sec:jko}

In this section we set the variational framework adopted to prove the existence of a weak solution to system \eqref{eq:main_syst_1}. More precisely, we introduce the energy functional associated to \eqref{eq:main_syst_1} and construct recursively an approximating sequence via the well-known JKO-scheme; see \cite{jko}.

\subsection{Energy functional}

Consider the pair $(\rho, \eta) \in \calP_2(\R^d) \times \calP_2 (\R^d)$. We introduce the energy functional
\begin{equation}
\label{eq:energy_functional_1}
    \begin{aligned}
        \calF (\rho,\eta) = & \frac{1}{2} \iint_{\R^d \times \R^d} K_s (x-y)\d \rho (x) \d \rho (y) + \frac{1}{2} \iint_{\R^d \times \R^d} K_r (x-y)\d \eta (x) \d \eta (y) \\
        & + \iint_{\R^d \times \R^d} K_q (x-y)\d \rho (x) \d \eta (y).
    \end{aligned}
\end{equation}
Recalling \eqref{eq:product_H_s}, in our context it is convenient to rewrite it as
\begin{equation}
\label{eq:energy_functional_2}
        \calF (\rho,\eta) = \frac{1}{2} \norm{\rho}^2_{\dot{H}^{-s} (\R^d)} + \frac{1}{2} \norm{\eta}^2_{\dot{H}^{-r} (\R^d)} + \langle \rho, \eta \rangle_{\dot{H}^{-q}(\R^d)},
\end{equation}
that is possible since a measure in $\calP_2(\R^d)$ is a tempered distribution and its Fourier transform is a $L^1_\mathrm{loc}$ function.

\begin{prop}
\label{prop:basic_propert}
    The energy functional $\calF$ introduced in \eqref{eq:energy_functional_1}-\eqref{eq:energy_functional_2} is defined in 
    \[\calD (\calF) = X_2^{s,q} \times X_2^{r,q}
    \]
    and is non-negative  for every $(\rho,\eta) \in \calP_2 (\R^d) \times \calP_2 (\R^d)$. Moreover, $\calF$ is sequentially lower semi-continuous with respect to the narrow convergence.
\end{prop}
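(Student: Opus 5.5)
The plan is to work in Fourier variables, where all three terms of $\calF$ become weighted $L^2$ integrals of $\hat\rho$ and $\hat\eta$.

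\emph{Domain and non-negativity.} Using \eqref{eq:energy_functional_2}, write
\[
\calF(\rho,\eta)=\frac{1}{2(2\pi)^d}\intd \Bigl(\abs{\xi}^{-2s}\abs{\hat\rho}^2+\abs{\xi}^{-2r}\abs{\hat\eta}^2+2\abs{\xi}^{-2q}\,\mathrm{Re}\bigl(\hat\rho\,\overline{\hat\eta}\bigr)\Bigr)\d\xi .
\]
\begin{itemize}
\item The diagonal terms are non-negative and finite exactly when $\rho\in\dot H^{-s}$ and $\eta\in\dot H^{-r}$.
\item The cross term is absolutely convergent when $\rho,\eta\in\dot H^{-q}$, by Cauchy--Schwarz: $\abs{\langle\rho,\eta\rangle_{-q}}\le\norm{\rho}_{\dot H^{-q}}\norm{\eta}_{\dot H^{-q}}$.
\end{itemize}
This gives $\calD(\calF)=X_2^{s,q}\times X_2^{r,q}$.

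For non-negativity, recall that the Fourier transforms of probability measures satisfy $\abs{\hat\rho},\abs{\hat\eta}\le 1$. Split the frequency space into $\{\abs{\xi}\le1\}$ and $\{\abs{\xi}>1\}$ and try to bound pointwise
\[
2\abs{\xi}^{-2q}\abs{\hat\rho}\abs{\hat\eta}\le \abs{\xi}^{-2s}\abs{\hat\rho}^2+\abs{\xi}^{-2r}\abs{\hat\eta}^2 .
\]
By AM--GM this holds whenever $\abs{\xi}^{-2q}\le\abs{\xi}^{-(s+r)}$, i.e.\ whenever the cross kernel is dominated by the geometric mean of the self kernels. That condition amounts to $2q\le s+r$ on $\abs{\xi}\ge1$ and $2q\ge s+r$ on $\abs{\xi}\le1$. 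These cannot both hold unless $2q=s+r$.

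So the pointwise argument alone fails in general, and this is the main obstacle. If it can be handled at all, it is through the constraint $\max\{s,r\}/2<q<\min\{s+1,r+1\}/2$ together with $\abs{\hat\mu}\le1$ and $\hat\mu(0)=1$. I would first try interpolation, Proposition \ref{prop:inequalities}(i), writing $\norm{\cdot}_{\dot H^{-q}}$ in terms of $\dot H^{-s}$, $\dot H^{-r}$ and the bounded $L^\infty$ information on $\hat\rho,\hat\eta$.

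Honestly, I expect no such bound exists. For $\rho=\eta$ concentrated (or spread out) appropriately and with the cross term entering with a negative sign, the energy can become negative. More to the point, $\hat\rho\overline{\hat\eta}$ may have negative real part, for instance when $\eta$ is a translate of $\rho$ with $\hat\rho$ concentrated near the relevant frequency. Since $K_q>0$ and $\rho,\eta\ge0$, the physical-space form \eqref{eq:energy_functional_1} makes every term manifestly non-negative. I would therefore prove non-negativity directly from \eqref{eq:energy_functional_1}, not via the pointwise Fourier bound: all three integrands are non-negative because $K_s,K_r,K_q>0$ and $\rho,\eta$ are non-negative measures. The identification of \eqref{eq:energy_functional_1} with \eqref{eq:energy_functional_2} follows from $\hat K_q=\abs{\xi}^{-2q}$ and Plancherel, approximating $\rho,\eta$ by mollifications and passing to the limit by monotone convergence in the Fourier integrals.

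\emph{Lower semicontinuity.} Again work with \eqref{eq:energy_functional_1}. For narrowly converging $\rho_n\rightharpoonup\rho$ and $\eta_n\rightharpoonup\eta$, the product measures $\rho_n\otimes\rho_n$, $\eta_n\otimes\eta_n$ and $\rho_n\otimes\eta_n$ converge narrowly on $\R^d\times\R^d$. The kernels $(x,y)\mapsto K_\alpha(x-y)$ are non-negative and lower semicontinuous, with value $+\infty$ on the diagonal. By the Portmanteau theorem for lower semicontinuous functions bounded below, one approximates each kernel from below by the bounded continuous truncations $\min\{K_\alpha,M\}$ and lets $M\to\infty$. This gives
\[
\iint K_\alpha(x-y)\d\mu(x)\d\nu(y)\le\liminf_{n}\iint K_\alpha(x-y)\d\mu_n(x)\d\nu_n(y).
\]
Summing the three liminf inequalities (all terms non-negative) yields $\calF(\rho,\eta)\le\liminf_n\calF(\rho_n,\eta_n)$.
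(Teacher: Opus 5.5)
Your final argument is correct. For lower semicontinuity it takes a genuinely different route from the paper.

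The paper works in Fourier variables:
\begin{itemize}
\item it discards the cross term by non-negativity, to get a uniform $L^2$ bound on $\abs{\xi}^{-s}\hat\rho_n$ and $\abs{\xi}^{-r}\hat\eta_n$;
\item it extracts weak $L^2$ limits and identifies them through the pointwise convergence $\hat\rho_n\to\hat\rho$, $\hat\eta_n\to\hat\eta$ of characteristic functions;
\item it concludes, following Lisini--Mainini--Segatti, by weak lower semicontinuity of the $L^2$ norm.
\end{itemize}
You instead stay in physical space. You apply the Portmanteau inequality to the non-negative, lower semicontinuous kernels $K_\alpha(x-y)$ along the narrowly converging products $\rho_n\otimes\rho_n$, $\eta_n\otimes\eta_n$, $\rho_n\otimes\eta_n$.

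Your route is more elementary and treats all three terms the same way. In particular it covers the cross term, which the paper's written Fourier argument does not address. That term cannot be handled this way directly: $\langle\rho_n,\eta_n\rangle_{\dot H^{-q}(\R^d)}$ is not a squared norm, and a pairing of two weakly converging sequences need not be lower semicontinuous. The paper's route, in exchange, stays inside the $\dot H^{-s}$ framework used later and yields weak compactness of $\abs{\xi}^{-s}\hat\rho_n$ as a by-product.

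A few minor corrections.
\begin{itemize}
\item Delete the exploratory claim that ``the energy can become negative''; it is false. $\mathrm{Re}(\hat\rho\,\overline{\hat\eta})$ can indeed be negative at some frequencies, which is why the pointwise AM--GM bound fails. But its integral against $(2\pi)^{-d}\abs{\xi}^{-2q}$ equals $\iint K_q(x-y)\d\rho(x)\d\eta(y)\ge 0$. Your physical-space proof of non-negativity is the right one, and is what the paper means by ``easy to check''.
\item Cauchy--Schwarz is only a sufficient condition for finiteness of the cross term. So you have shown that $\calF$ is finite on $X_2^{s,q}\times X_2^{r,q}$, not that this set is exactly $\{\calF<\infty\}$. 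That is the sensible reading of the statement: for $q<s$ the set $\{\calF<\infty\}$ can be strictly larger. For example, take $\rho\in\dot H^{-s}\setminus\dot H^{-q}$ compactly supported and $\eta$ a smooth density supported away from $\mathrm{supp}\,\rho$.
\item In identifying \eqref{eq:energy_functional_1} with \eqref{eq:energy_functional_2}, monotone convergence only covers the diagonal terms. The cross term has a Fourier integrand that is not sign-definite. Handle it by polarization, $2\langle\rho,\eta\rangle_{\dot H^{-q}(\R^d)}=\norm{\rho+\eta}^2_{\dot H^{-q}(\R^d)}-\norm{\rho}^2_{\dot H^{-q}(\R^d)}-\norm{\eta}^2_{\dot H^{-q}(\R^d)}$, applied to the positive measure $\rho+\eta$.
\end{itemize}
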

\begin{proof}
    The domain and non-negativity are easy to check. Let $\{ (\rho_n, \eta_n)\}_n \subset \calP_2 (\R^d) \times \calP_2 (\R^d)$ be a sequence such that $\sup_n \calF (\rho_n, \eta_n) < + \infty$, and that converges narrowly to $(\rho,\eta) \in \calP_2 (\R^d) \times \calP_2 (\R^d)$. It holds that
    \begin{align*}
        \calF(\rho_n, \eta_n) & = \frac{1}{2} \iint_{\R^d \times \R^d} K_s (x-y) \d \rho_n (x) \d \rho_n (y) + \frac{1}{2} \iint_{\R^d \times \R^d} K_r (x-y) \d \eta (x) \d \eta (y) \\
        & \qquad + \iint_{\R^d \times \R^d} K_q (x-y) \d \rho_n (x) \d \eta_n (y) \\
        & \geq \frac{1}{2} \iint_{\R^d \times \R^d} K_s (x-y) \d \rho_n (x) \d \rho_n (y) + \frac{1}{2} \iint_{\R^d \times \R^d} K_r (x-y) \d \eta (x) \d \eta (y).
    \end{align*}
    Now, defining $A_n (\xi) \coloneqq \abs{\xi}^{-s} \hat{\rho}_n(\xi)$ and $B_n (\xi) \coloneqq \abs{\xi}^{-r} \hat{\eta}_n (\xi)$, due to the definition of $K_s$ it follows that $\sup_n ( \norm{A_n}_{L^2 (\R^d)} + \norm{B_n}_{L^2 (\R^d)} ) < + \infty$. Since $L^2(\R^d)$ is weakly compact, then both $\{A_n\}_n$ and $\{ B_n\}_n$ converge weakly in $L^2$ to $A \in L^2(\R^d)$ and $B\in L^2 (\R^d)$ respectively. Since both $\rho_n$ and $\eta_n$ converge narrowly to $\rho$ and $\eta$ respectively, then $\hat{\rho}_n$ and $\hat{\eta}_n$ converge pointwise to $\hat{\rho}$ and $\hat{\eta}$ respectively, thus $A_n (\xi) \to \abs{\xi}^{-2s} \hat{\rho} (\xi)$ and $B_n (\xi) \to \abs{\xi}^{-2r} \hat{\eta}(\xi)$ for all $\xi \in \R^d$. Following \cite{lis_main_seg}, we get the statement.
\end{proof}

\subsection{JKO-scheme}
We are now in the position of constructing our approximating sequence. Consider an initial pair $(\rho_0, \eta_0) \in \calP_2(\R^d) \times \calP_2 (\R^d)$ be such that $\calF (\rho_0, \eta_0) < + \infty$. Let $\tau >0$ be fixed. We construct the recursively sequence $\{ (\rho_\tau^n, \eta_\tau^n) \}_{n \in \N} \in \calP_2(\R^d) \times \calP_2 (\R^d)$ as follows: set $(\rho_\tau^0, \eta_\tau^0) = (\rho_0, \eta_0)$, then
\begin{equation}
    (\rho_\tau^{n+1}, \eta_\tau^{n+1}) \in \text{argmin} \bigg\{ \frac{1}{2 \tau} \calW_2^2 ((\rho_\tau^n, \eta_\tau^n), (\rho,\eta)) + \calF(\rho,\eta) \, : \, (\rho,\eta) \in \calP_2(\R^d) \times \calP_2 (\R^d) \bigg\}. \label{eq:jko_scheme}
\end{equation}

The well-posedness of the minimization problem above is an easy consequence of Proposition \ref{prop:basic_propert}, together with standard results in optimal transportation theory, cf.\ \cite{ags,santambrogio_book}. Let $T>0$ be fixed and set $N=\left \lceil \frac{T}{\tau} \right\rceil$. We define the piecewise constant interpolation of the sequence above as 
\begin{equation}
\label{eq:seq_interpol}
    (\rho_\tau (t), \eta_\tau (t))=(\rho_\tau^n, \eta_\tau^n) \qquad t \in ((n-1)\tau, n\tau]
\end{equation}
for $1\leq n \leq N-1$. We now want to establish a compactness result for the family $\{ (\rho_\tau^n, \eta_\tau^n ) \}_{\tau>0}$.

\begin{thm}
    \label{thm:conv_sol}
    Let $T >0$ be fixed, and let $(\rho_0, \eta_0) \in X_2^{s,q} \times X_2^{r,q}$ be a given initial datum. 
    Then, there is a subsequence $\{ (\rho_{\tau_k}, \eta_{\tau_k} ) \}$ of the sequence defined in \eqref{eq:seq_interpol} narrowly converging as $\tau_k \to 0$ to an absolute continuous curve $(\rho,\eta):[0,T] \to \calP_2 (\R^d)\times\calP_2(\R^d)$ uniformly in $t\in [0,T]$.
\end{thm}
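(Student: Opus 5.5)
We follow the standard route for minimizing movement schemes, namely the refined Ascoli--Arzelà theorem in \cite[Proposition 3.3.1]{ags}, applied on the product space $\calP_2(\R^d)\times\calP_2(\R^d)$ with the distance $\calW_2$ and the narrow topology. Two ingredients are needed: a uniform bound on second moments, which gives tightness and hence narrow relative compactness of $\{(\rho_\tau(t),\eta_\tau(t))\}$ for each $t$; and an approximate equi-continuity estimate of the form $\calW_2((\rho_\tau(t),\eta_\tau(t)),(\rho_\tau(t'),\eta_\tau(t')))\le C(\sqrt{|t-t'|}+\sqrt\tau)$.

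The first step is the discrete energy inequality. Compare the minimizer in \eqref{eq:jko_scheme} with the competitor $(\rho_\tau^n,\eta_\tau^n)$ to get
\[
\frac{1}{2\tau}\calW_2^2\bigl((\rho_\tau^n,\eta_\tau^n),(\rho_\tau^{n+1},\eta_\tau^{n+1})\bigr)+\calF(\rho_\tau^{n+1},\eta_\tau^{n+1})\le \calF(\rho_\tau^n,\eta_\tau^n).
\]
Summing over $n$ and using the non-negativity of $\calF$ from Proposition \ref{prop:basic_propert}, we obtain
\[
\sup_n \calF(\rho_\tau^n,\eta_\tau^n)\le\calF(\rho_0,\eta_0), \qquad \sum_{n}\calW_2^2\bigl((\rho_\tau^n,\eta_\tau^n),(\rho_\tau^{n+1},\eta_\tau^{n+1})\bigr)\le 2\tau\,\calF(\rho_0,\eta_0).
\]
The right-hand side is finite because $(\rho_0,\eta_0)\in X_2^{s,q}\times X_2^{r,q}$; finiteness of the cross term comes from Cauchy--Schwarz in $\dot H^{-q}$. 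The non-negativity of $\calF$ is the delicate point, since the cross term $\langle\rho,\eta\rangle_{\dot H^{-q}}$ is a priori of indefinite sign. Here it is simply inherited from Proposition \ref{prop:basic_propert}: for the Riesz kernels all three integrals are integrals of positive kernels against positive measures. This is the only place where one might worry, and it is dispatched by that proposition.

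Next, Cauchy--Schwarz on the telescoping sum gives, for $t\in((m-1)\tau,m\tau]$ and $t'\in((n-1)\tau,n\tau]$ with $m<n$,
\[
\calW_2\bigl((\rho_\tau(t),\eta_\tau(t)),(\rho_\tau(t'),\eta_\tau(t'))\bigr)\le \sqrt{n-m}\Bigl(\sum_{k}\calW_2^2\Bigr)^{1/2}\le \sqrt{2\calF(\rho_0,\eta_0)}\,\sqrt{|t-t'|+\tau}.
\]
Taking $t'=0$ shows that $\calW_2((\rho_\tau(t),\eta_\tau(t)),(\rho_0,\eta_0))$ is bounded uniformly in $t\in[0,T]$ and $\tau\le1$. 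Hence $\mathrm{m}_2[\rho_\tau(t)]+\mathrm{m}_2[\eta_\tau(t)]\le C(T,\rho_0,\eta_0)$, which implies tightness.

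Finally, $\calW_2$ is narrowly lower semicontinuous, and $\limsup_{\tau\to0}$ of the modulus above is $C\sqrt{|t-t'|}$. The compactness theorem of \cite{ags} therefore yields a subsequence $\tau_k\to0$ and a limit curve $(\rho,\eta)$ such that $(\rho_{\tau_k}(t),\eta_{\tau_k}(t))$ converges narrowly to $(\rho(t),\eta(t))$ for every $t\in[0,T]$. Since the limit is equicontinuous and the narrow topology on bounded-moment sets is metrizable, the convergence is uniform in $t$. The limit satisfies $\calW_2((\rho(t),\eta(t)),(\rho(t'),\eta(t')))\le C\sqrt{|t-t'|}$, so it is $1/2$-Hölder. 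Since the second moments of the limit are bounded by lower semicontinuity, the curve takes values in $\calP_2\times\calP_2$. To upgrade Hölder continuity to absolute continuity, we would use the discrete metric derivative bound, i.e.\ the $L^2(0,T)$ bound on $\calW_2(\text{consecutive steps})/\tau$ from the summed inequality, together with lower semicontinuity as in \cite[Theorem 11.1.6]{ags}. This gives a curve in $AC^2([0,T];\calP_2\times\calP_2)$.
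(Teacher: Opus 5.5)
Your proposal is correct and follows the paper's proof essentially step by step: the one-step JKO comparison summed using $\calF\ge 0$, the Cauchy--Schwarz telescoping bound $\calW_2\le C\sqrt{|t-t'|+\tau}$, the uniform second-moment bound obtained by taking $t'=0$, and the refined Ascoli--Arzel\`{a} theorem \cite[Proposition 3.3.1]{ags}. Your extra remarks on uniformity in $t$ and on $AC^2$ regularity (via the $L^2(0,T)$ bound on the discrete speeds) fill in points the paper leaves implicit. One wording fix: uniform convergence in $t$ follows from the asymptotic equicontinuity $\calW_2\le C\sqrt{|t-t'|+\tau}$ of the approximants, with e.g.\ $W_1$ metrizing narrow convergence on sets of bounded second moment, not from equicontinuity of the limit curve alone.
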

\begin{proof}
    Considering \eqref{eq:jko_scheme}, it holds that
    \[
        \frac{1}{2 \tau} \calW_2^2 ((\rho_ \tau^n, \eta_\tau^n), (\rho_\tau^{n+1}, \eta_\tau^{n+1})) \leq \calF (\rho_\tau^n, \eta_\tau^n) - \calF (\rho_\tau^{n+1}, \eta_\tau^{n+1}), 
    \]
    thus
    \begin{equation}
    \label{eq:proof_energy_1}
        \calF (\rho_\tau^{n+1}, \eta_\tau^{n+1} ) \leq \calF (\rho_0, \eta_0)
    \end{equation}
    for all $n \in \N$. 
    Taking the sum over $k$ from $m$ to $n-1$, with $m<n$, we get
    \[
        \frac{1}{2\tau}\sum_{k=m}^{n-1} \calW_2^2 ((\rho_\tau^k, \eta_\tau^k), (\rho_\tau^{k+1}, \eta_\tau^{k+1})) \leq \calF(\rho_\tau^m, \eta_\tau^m) - \calF(\rho_\tau^n, \eta_\tau^n).
    \]
    By the non-negativity of the energy functional $\calF$ and \eqref{eq:proof_energy_1}, we deduce
    \begin{equation}
    \label{eq:proof_prop_2}
        \frac{1}{2\tau}\sum_{k=m}^{n-1} \calW_2^2 ((\rho_\tau^k, \eta_\tau^k), (\rho_\tau^{k+1}, \eta_\tau^{k+1})) \leq \calF(\rho_0 \eta_0) \eqqcolon \Tilde{C}(\rho_0,\eta_0),
    \end{equation}
     where $\Tilde{C}(\rho_0,\eta_0)$ is a finite constant depending on the initial datum.
    Let $m < n$, and consider $0<s<t<T$ with $s\in ((m-1)\tau, m\tau]$ and $t \in ((n-1)\tau, n\tau]$. By using the triangular inequality we compute
    \begin{align*}
        \calW_2^2 ((\rho_\tau(s), \eta_\tau (s)), (\rho_\tau(t), \eta_\tau (t))) & = \calW_2^2 ((\rho_\tau^m, \eta_\tau^m), (\rho_\tau^n, \eta_\tau^n)) \\
        & \leq \left[ \sum_{k=m}^{n-1} \calW_2 ((\rho_\tau^k, \eta_\tau^k), (\rho_\tau^{k+1}, \eta_\tau^{k+1})) \right]^2 \\
        & \leq (n-m) \sum_{k=m}^{n-1} \calW_2^2 ((\rho_\tau^k, \eta_\tau^k), (\rho_\tau^{k+1}, \eta_\tau^{k+1})).
    \end{align*}
    If $s=0$, due to \eqref{eq:proof_prop_2} we obtain
    \[
        \calW_2^2 ((\rho_0, \eta_0), (\rho_\tau (t), \eta_\tau (t)) \leq \overline{C} (\rho_0, \eta_0, T),
    \]
    with $\overline{C}$ a constant depending only on $\rho_0, \eta_0$ and $T$, that is the second moment of $(\rho_\tau(t), \eta_\tau(t))$ is uniformly bounded in time, implying the compactness of the curves $(\rho_\tau, \eta_\tau)$ with respect to the narrow convergence. Furthermore, it holds that $\abs{n-m} \leq \frac{\abs{t-s}}{\tau} + 1$, and then
    \begin{align*}
        \calW_2 ( (\rho_\tau(s), \eta_\tau (s)), (\rho_\tau(t), \eta_\tau (t))) & \leq \abs{n-m}^{1/2} \left[ \sum_{k=m}^{n-1} \calW_2^2 ((\rho_\tau^k, \eta_\tau^k), (\rho_\tau^{k+1}, \eta_\tau^{k+1})) \right]^{1/2} \\
        & \leq C (\sqrt{\abs{t-s}} + \sqrt{\tau}),
    \end{align*}
    with $C>0$. In order to apply \cite[Proposition 3.3.1]{ags}, we notice that
    \[
        \limsup_{\tau \to 0} \calW_2 ( (\rho_\tau(s), \eta_\tau (s)), (\rho_\tau(t), \eta_\tau (t))) \leq C \sqrt{\abs{s-t}},
    \]
    and the function $g(s,t)=C \sqrt{\abs{t-s}}$ is symmetric on $[0,T]\times [0,T]$ and satisfies $g(s,t)\to 0$ as $(s,t)\to (r,r)$ for all $r\in [0,T]$. This concludes the proof. 
    \end{proof}

In what follows we will also make use of the so-called De Giorgi variational interpolation, defined as
\begin{equation}\label{e:degiorgi}
    (\tilde{\rho}_\tau(t), \tilde{\eta}_\tau(t)) \in \text{argmin} \bigg\{ \frac{\calW_2^2 ((\rho_\tau^{n-1}, \eta_\tau^{n-1}), (\rho,\eta))}{2(t-(n-1)\tau)}  + \calF(\rho,\eta) \, : \, (\rho,\eta) \in \calP_2(\R^d) \times \calP_2 (\R^d) \bigg\},
\end{equation}
for $t\in \left((n-1)\tau,n\tau\right]$ and $(\tilde{\rho}_\tau(0), \tilde{\eta}_\tau(0))=(\rho_0, \eta_0)$; see \cite[Section 3.2]{ags}.

\subsection{First flow interchange. The heat flow.} 

We are now ready to prove some regularity results for the piecewise constant curve $\{ (\rho_\tau^n, \eta_\tau^n )\}_{n \in \N}$ by using the by now classical flow interchange technique introduced in \cite{matthes2009family}, and that is reported in Appendix \ref{sec:appendix}. In order to proceed, consider the decoupled system
\begin{equation}
    \begin{dcases}
    \label{eq:syst_aux_1}
        \de_t u_1 = \Delta u_1, \\
        \de_t u_2 = \Delta u_2,
    \end{dcases}
\end{equation}
that is the gradient flow of the functional 
\[
    \calH (u_1, u_2) = \int_{\R^d} u_1 (x) \log u_1 (x) \d x + \int_{\R^d} u_2 (x) \log u_2(x)\d x
\]
in the space $(\calP_2 (\R^d) \times \calP_2 (\R^d), \calW_2^2)$. Let $\calS_t^\calH = (\calS_{1,t}^\calH, \calS_{2,t}^\calH)$ be the semigroup generated by \eqref{eq:syst_aux_1}, i.e., $(u_1(t), u_2(t))= \calS_t^\calH (u_1(0), u_2(0)) = (\calS_{1,t}^\calH u_1(0), \calS_{2,t}^\calH u_2(0))$, for all $t\in [0,T]$.

\begin{prop} \label{prop:est_auxiliary}
    Let $(\rho_0, \eta_0) \in X_2^{s,q}\times X_2^{r,q}$ such that $\calH (\rho_0, \eta_0) < + \infty$. Let $\{(\rho_\tau^n, \eta_\tau^n)\}_{n \in \N}$ be the sequence defined in \eqref{eq:jko_scheme}, and $(\rho_\tau (t), \eta_\tau(t))$ the corresponding piecewise constant interpolation defined in \eqref{eq:seq_interpol}. Then, for all $t \geq 0$ it holds
    \[
        (\rho_\tau (t), \eta_\tau (t) ) \in \dot{H}^{1-s} (\R^d) \times \dot{H}^{1-r}(\R^d).
    \]
    Furthermore, there exists a constant $C>0$ depending only on the dimension $d$ such that for all $T>0$ it holds
    \begin{align*}
    & \int_0^T   \norm{\rho_\tau (t)}^2_{\dot{H}^{1-s}(\R^d)}  \d t \leq \calH (\rho_0, \eta_0) + C \left( 1 + T \calF (\rho_0, \eta_0) + \mathrm{m}_2[\rho_0]+\mathrm{m}_2[\eta_0]\right), \\
    & \int_0^T  \norm{\eta_\tau (t)}^2_{\dot{H}^{1-r}(\R^d)} \d t \leq \calH (\rho_0, \eta_0) + C \left( 1 + T \calF (\rho_0, \eta_0) + \mathrm{m}_2[\rho_0]+\mathrm{m}_2[\eta_0]\right).
    \end{align*}
\end{prop}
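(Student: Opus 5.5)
We follow the flow interchange technique of \cite{matthes2009family}, as adapted in \cite{lis_main_seg}, using the auxiliary heat flow \eqref{eq:syst_aux_1} generated by $\calH$. Since $\calH$ is geodesically $0$-convex on $\calP_2(\R^d)\times\calP_2(\R^d)$ and the heat semigroup $\calS_t^\calH$ is its $0$-flow, the flow interchange lemma of Appendix \ref{sec:appendix} applies to the JKO minimiser $(\rho_\tau^{n+1},\eta_\tau^{n+1})$ of \eqref{eq:jko_scheme}. It gives
\[
\tau\, \calD^{\calH}\calF(\rho_\tau^{n+1},\eta_\tau^{n+1}) \leq \calH(\rho_\tau^{n},\eta_\tau^{n})-\calH(\rho_\tau^{n+1},\eta_\tau^{n+1}),
\qquad
\calD^{\calH}\calF(\mu,\nu)\coloneqq\limsup_{h\downarrow 0}\frac{\calF(\mu,\nu)-\calF(\calS_h^\calH(\mu,\nu))}{h}.
\]
Here we use that $\calH(\rho_\tau^{n+1},\eta_\tau^{n+1})$ is finite. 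This holds inductively, because the right-hand side is finite once $\calH(\rho_0,\eta_0)<+\infty$.

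The core computation is the dissipation of $\calF$ along the heat flow. Write $u_i(h)$ for the heat evolutions, which are smooth for $h>0$. Using \eqref{eq:energy_functional_2} in Fourier variables, $\hat u_i(h,\xi)=e^{-h|\xi|^2}\hat u_i(0,\xi)$, so
\[
-\frac{d}{dh}\calF(u_1,u_2) = \norm{u_1}^2_{\dot H^{1-s}}+\norm{u_2}^2_{\dot H^{1-r}}+2\langle u_1,u_2\rangle_{\dot H^{1-q}}.
\]
The cross term has no sign. We bound it by Cauchy--Schwarz in Fourier, splitting $|\xi|^{2(1-q)}=|\xi|^{1-s}|\xi|^{1-r}|\xi|^{s+r-2q}$. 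This is not directly controlled, so instead we interpolate:
\[
|\langle u_1,u_2\rangle_{\dot H^{1-q}}|\le \norm{u_1}_{\dot H^{1-q}}\norm{u_2}_{\dot H^{1-q}},
\]
and each factor is interpolated by \eqref{eq:interp_H_dot}, for instance
\[
\norm{u_1}_{\dot H^{1-q}}\le \norm{u_1}^{1-\theta}_{\dot H^{-q}}\norm{u_1}^{\theta}_{\dot H^{1-s}}
\quad\text{with } \theta=\frac{1}{1+q-s}\in(0,1),
\]
and similarly for $u_2$ with $r$. This is where the constraint on $q$ should enter: one needs $-q<1-q<1-s$, i.e.\ $q>s-\dots$, together with control of $\norm{u_i}_{\dot H^{-q}}$. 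We have $\norm{u_i}_{\dot H^{-q}}\le\norm{u_i(0)}_{\dot H^{-q}}$ along the heat flow, and the $\dot H^{-q}$ norms of the iterates must be bounded by $\calF(\rho_0,\eta_0)$. This last point is delicate since $\calF$ controls $\dot H^{-s}$, $\dot H^{-r}$ and the cross term only. Low frequencies can be handled with $|\hat\rho|\le1$, using $q<d/2$, and high frequencies by interpolation between $\dot H^{-s}$ and $\dot H^{1-s}$ using $q>s/2$-type conditions. Young's inequality then absorbs the cross term:
\[
2|\langle u_1,u_2\rangle_{\dot H^{1-q}}|\le \tfrac12\norm{u_1}^2_{\dot H^{1-s}}+\tfrac12\norm{u_2}^2_{\dot H^{1-r}}+C\bigl(1+\calF(\rho_0,\eta_0)\bigr).
\]
Letting $h\downarrow0$ and using lower semicontinuity of the $\dot H^{1-s}$ norms gives
\[
\calD^\calH\calF(\rho_\tau^{n+1},\eta_\tau^{n+1})\ge \tfrac12\bigl(\norm{\rho^{n+1}_\tau}^2_{\dot H^{1-s}}+\norm{\eta^{n+1}_\tau}^2_{\dot H^{1-r}}\bigr)-C(1+\calF(\rho_0,\eta_0)).
\]
In particular the iterates lie in $\dot H^{1-s}\times\dot H^{1-r}$.

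Summing over $n$ telescopes the entropy, leaving $\calH(\rho_0,\eta_0)-\calH(\rho_\tau^N,\eta_\tau^N)$ plus $CT(1+\calF(\rho_0,\eta_0))$. The entropy has no lower bound on $\calP_2$, so we use the standard Carleman-type estimate
\[
\calH(\mu,\nu)\ge -C\bigl(1+\mathrm m_2[\mu]+\mathrm m_2[\nu]\bigr).
\]
The second moments are uniformly bounded along the scheme by $C(\mathrm m_2[\rho_0]+\mathrm m_2[\eta_0]+T\calF(\rho_0,\eta_0))$, as in the proof of Theorem \ref{thm:conv_sol}. Combining these gives the claimed bound, after multiplying the factor $\frac12$ over to the other side.

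The main obstacle is the sign-indefinite cross term $\langle u_1,u_2\rangle_{\dot H^{1-q}}$. The interpolation exponents there must match exactly the range $\max\{s/2,r/2\}<q<\min\{(s+1)/2,(r+1)/2\}$. I would first try interpolating $\dot H^{1-q}$ between $\dot H^{-s}$ and $\dot H^{1-s}$, which requires only $s-1<\dots$. Controlling the cross term via $\norm{u_1}_{\dot H^{1-q}}^2\le\norm{u_1}_{\dot H^{-s}}^{2\alpha}\norm{u_1}_{\dot H^{1-s}}^{2(1-\alpha)}$ needs $-s\le 1-q\le 1-s$, i.e.\ $q\ge s$. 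That fails in general, so a more careful split in frequency using $q>s/2$ and $q<(s+1)/2$ will be needed.
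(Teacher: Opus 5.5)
\textbf{Overall route.} Your architecture matches the paper's proof: flow interchange with the heat flow; the identity $-\frac{d}{dh}\calF=\norm{u_1}^2_{\dot H^{1-s}}+\norm{u_2}^2_{\dot H^{1-r}}+2\langle u_1,u_2\rangle_{1-q}$; lower semicontinuity as $h\downarrow 0$; telescoping; the bound $\calH\ge -C(1+\mathrm{m}_2[\cdot]+\mathrm{m}_2[\cdot])$; and the moment bound along the scheme.

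\textbf{The gap.} The gap is the one step that carries all the difficulty. You never prove an estimate of the form $2\abs{\langle u_1,u_2\rangle_{1-q}}\le \frac12\norm{u_1}^2_{\dot H^{1-s}}+\frac12\norm{u_2}^2_{\dot H^{1-r}}+C(1+\calF(\rho_0,\eta_0))$; you only announce that a finer frequency split ``will be needed''. You are right that this term has no sign. The paper drops it by asserting $\langle u_1,u_2\rangle_{1-q}\ge 0$, but that step cannot be borrowed. For $\sigma=1-q\in(0,1)$ and $u_1,u_2\ge 0$ with disjoint supports, $\langle u_1,u_2\rangle_{\dot H^{\sigma}}=-c_{d,\sigma}\iint u_1(x)u_2(y)\abs{x-y}^{-d-2\sigma}\d x\d y<0$ with $c_{d,\sigma}>0$. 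Only the negative-order product $\langle\cdot,\cdot\rangle_{\dot H^{-q}}$ is nonnegative on measures.

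\textbf{Why a frequency split cannot work in the stated range.} Low frequencies are harmless since $\abs{\hat u_i}\le 1$. At frequency $\xi$, however, the dissipation density is the quadratic form $\abs{\xi}^{2-2s}\abs{a}^2+\abs{\xi}^{2-2r}\abs{b}^2+2\abs{\xi}^{2-2q}\mathrm{Re}(a\bar b)$. This form is indefinite for $\abs{\xi}>1$ whenever $2q<s+r$. The range $\max\{s/2,r/2\}<q<\min\{(s+1)/2,(r+1)/2\}$ always contains such $q$ (e.g.\ $s=r$, $s/2<q<s$).

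Concretely, take $s=r$, $q<s$, a Gaussian $\mu$, and normalised $\rho=\mu(1+\epsilon\cos(Nx_1))$, $\eta=\mu(1-\epsilon\cos(Nx_1))$. Then $\calF$, $\calH$ and the second moments stay bounded, while the dissipation behaves like $C_\mu+\epsilon^2\norm{\mu}^2_{L^2}(N^{2-2s}-N^{2-2q})\to-\infty$. So no bound $\mathsf{D}^\calH\calF\ge c(\norm{\rho}^2_{\dot H^{1-s}}+\norm{\eta}^2_{\dot H^{1-r}})-C$ can hold on sublevel sets. The stated range of $q$ is only what puts $\nabla K_q\ast\rho$ and $\nabla K_q\ast\eta$ in $L^2$ in Lemma~\ref{lemma:convergence}; it does not control this dissipation.

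\textbf{What does close the argument.} The extra hypothesis $2q>s+r$ suffices.
\begin{itemize}
\item Set $\delta\coloneqq q-\frac{s+r}{2}\in(0,1)$.
\item Apply Cauchy--Schwarz in Fourier with weights $\abs{\xi}^{1-s-\delta}$ and $\abs{\xi}^{1-r-\delta}$, then interpolate, to get $\abs{\langle u_1,u_2\rangle_{1-q}}\le \norm{u_1}^{\delta}_{\dot H^{-s}}\norm{u_1}^{1-\delta}_{\dot H^{1-s}}\norm{u_2}^{\delta}_{\dot H^{-r}}\norm{u_2}^{1-\delta}_{\dot H^{1-r}}$.
\item Bound the negative-order norms by $\sqrt{2\calF(\rho_0,\eta_0)}$: each term of $\calF$ is nonnegative, the heat flow decreases these norms, and $\calF$ decreases along the scheme.
\item Apply Young's inequality, which works because the total power $2-2\delta$ of the positive-order norms is $<2$.
\end{itemize}
This gives $2\abs{\langle u_1,u_2\rangle_{1-q}}\le \frac12(\norm{u_1}^2_{\dot H^{1-s}}+\norm{u_2}^2_{\dot H^{1-r}})+C\calF(\rho_0,\eta_0)$, with no need for $\dot H^{-q}$ bounds on the individual densities. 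The rest of your argument then yields the stated estimate, with $C$ depending also on $s,r,q$ and a factor $2$ in front of $\calH(\rho_0,\eta_0)$.
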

\begin{proof}
    Consider the minimising sequence $(\rho_\tau^{n+1}, \eta_\tau^{n+1})_{n \in \N}$ defined in \eqref{eq:jko_scheme}. By construction, we get
    \begin{align*}
        \begin{aligned}
            & \frac{1}{2 \tau} \calW_2^2 ((\rho_\tau^{n+1}, \eta_\tau^{n+1}), (\rho_\tau^n,\eta_\tau^n)) + \calF(\rho_\tau^{n+1},\eta_\tau^{n+1}) \\
            & \quad \leq \frac{1}{2 \tau} \calW_2^2 ((\calS_{1,h}^\calH \rho_\tau^{n+1}, \calS_{2,h}^\calH \eta_\tau^{n+1}), (\rho_\tau^n,\eta_\tau^n)) + \calF(\calS_{1,h}^\calH \rho_\tau^{n+1}, \calS_{2,h}^\calH \eta_\tau^{n+1}).
        \end{aligned}
    \end{align*}
    Then, for all $h>0$, by rearrenging the terms and dividing by $h$ we have
    \begin{align*}
        \begin{aligned}
            & \tau \frac{\calF(\rho_\tau^{n+1},\eta_\tau^{n+1}) - \calF(\calS_{1,h}^\calH \rho_\tau^{n+1}, \calS_{2,h}^\calH \eta_\tau^{n+1})}{h} \\
            & \quad \leq \frac{1}{2h} \left[ \calW_2^2 ((\calS_{1,h}^\calH \rho_\tau^{n+1}, \calS_{2,h}^\calH \eta_\tau^{n+1}), (\rho_\tau^n,\eta_\tau^n)) -  \calW_2^2 ((\rho_\tau^{n+1},    \eta_\tau^{n+1}), (\rho_\tau^n,\eta_\tau^n))\right].
        \end{aligned}
    \end{align*}
    By the definition of dissipation of $\calF$ along $\calS^\calH$, see Lemma \ref{flowinterlem} below, by taking the $\limsup$ at $h\downarrow 0$, we have
    \begin{equation}
    \label{eq:proof_diss_1}
        \tau \mathsf{D}^\calH \calF (\rho_\tau^{n+1}, \eta_\tau^{n+1} ) \leq \frac{1}{2} \frac{d^+}{dt} \left( \calW_2^2 ((\calS_{1,t}^\calH \rho_\tau^{n+1}, \calS_{2,t}^\calH \eta_\tau^{n+1}), (\rho_\tau^n,\eta_\tau^n)) \right) \bigg\lvert_{t=0}.
    \end{equation}
    By using the E.V.I. inequality \eqref{eq:evi} for the heat-flow we can bound
    \[
    \label{eq:proof_diss_2}
        \tau \mathsf{D}^\calH \calF (\rho_\tau^{n+1}, \eta_\tau^{n+1} ) \leq \calH (\rho_\tau^n,\eta_\tau^n) - \calH (\rho_\tau^{n+1},\eta_\tau^{n+1}).
    \]
    We now aim to produce a bound from below for the  dissipation term. Remember that
    \[
        \mathsf{D}^\calH \calF (\rho_\tau^{n+1}, \eta_\tau^{n+1}) = \limsup_{h \downarrow 0} \int_0^1 \left( - \frac{d}{dz} \bigg\lvert_{z=ht} \calF (\calS_{1,z}^\calH \rho_\tau^{n+1}, \calS_{2,z}^\calH \eta_\tau^{n+1}) \right) \d t.
    \]
Recalling that in Fourier variable it holds
    \[
        \de_t \widehat{\calS_{1,t}^\calH \rho_\tau^n} (\xi) = -\abs{\xi}^2 \widehat{\calS_{1,t}^\calH \rho_\tau^n} (\xi) \qquad \mbox{\and} \qquad \de_t \widehat{\calS_{2,t}^\calH \eta_\tau^n} (\xi) = -\abs{\xi}^2 \widehat{\calS_{2,t}^\calH \eta_\tau^n} (\xi)
    \]
    in $(0,T) \times \R^d$, then the time derivative of the energy functional $\calF$ along the $\calS^\calH$ is
    \begin{align*}
        \frac{d}{dt} \calF(\calS_{1,t}^\calH \rho_\tau^n, \calS_{2,t}^\calH \eta_\tau^n) 
        & = -\frac{1}{(2 \pi)^d} \intd \abs{\xi}^{2(1-s)} \abs{\widehat{\calS_{1,t}^\calH \rho_\tau^n} (\xi)}^2 \d \xi \\
        & \quad -\frac{1}{(2 \pi)^d} \intd \abs{\xi}^{2(1-r)} \abs{\widehat{\calS_{2,t}^\calH \eta_\tau^n} (\xi)}^2 \d \xi \\
        & \quad - \frac{2}{(2 \pi)^d} \intd \abs{\xi}^{2(1-q)} \widehat{\calS_{1,t}^\calH \rho_\tau^n} (\xi) \overline{\widehat{\calS_{2,t}^\calH \eta_\tau^n} (\xi)}\d \xi \\
        & = -\norm{\calS_{1,t}^\calH \rho_\tau^n}^2_{\dot{H}^{1-s}(\R^d)} -\norm{\calS_{2,t}^\calH \eta_\tau^n}^2_{\dot{H}^{1-r}(\R^d)} - 2 \left\langle \calS_{1,t}^\calH \rho_\tau^n, \calS_{2,t}^\calH \eta_  \tau^n \right\rangle_{1-q}.
    \end{align*}

Since flows $\calS_{1,t}^\calH \rho_\tau^n$ and $\calS_{2,t}^\calH \eta_\tau^n$ are solutions to the decoupled heat equations \eqref{eq:syst_aux_1}, they are non-negative and we can express them as
    \[
        \calS_{1,t}^\calH \rho_\tau^n= \Gamma_t \ast \rho_\tau^n, \qquad \mbox{and} \qquad \calS_{2,t}^\calH \eta_\tau^n= \Gamma_t \ast \eta_\tau^n,
    \]
    where $\Gamma_t$ is the heat kernel at time $t$. We can then estimate the $\dot{H}^{1-s}$-norm of $\calS_{1,t}^\calH \rho_\tau^n$ as follows
    \begin{align*}
        \norm{\calS_{1,t}^\calH \rho_\tau^n}_{\dot{H}^{1-s}(\R^d)} = \intd \abs{\xi}^{2(1-s)} \abs{\widehat{\calS_{1,t}^\calH \rho_\tau^n}(\xi)}^2 \d \xi & = \intd \abs{\xi}^{2(1-s)} \abs{\hat{\Gamma_t} (\xi)}^2 \abs{\hat{\rho}_\tau^n (\xi)} ^2\d \xi \\
        & \leq \max_\xi \abs{\xi}^2 \abs{\hat{\Gamma_t} (\xi)}^2 \intd \abs{\xi}^{-2s} \abs{\hat{\rho}_\tau^n (\xi)} ^2\d \xi \\
        & = C(t) \norm{\rho_\tau^n}_{\dot{H}^{-s}(\R^d)} \\
        & \leq 2C(t) \calF (\rho_\tau^n, \eta_\tau^n) \\
        & \leq 2C(t) \calF (\rho_0, \eta_0),
    \end{align*}
    where we used \eqref{eq:proof_energy_1}, with a positive constant $C(t)$ depending on time.
    Similarly, we can bound
    \[
        \norm{\calS_{2,t}^\calH \eta_\tau^n}_{\dot{H}^{1-r}(\R^d)} \leq 2C(t) \calF (\rho_0, \eta_0).
    \]
    We now want to estimate the scalar product between $\calS_1^\calH\rho_\tau^n$ and $\calS_2^\calH\eta_\tau^n$ in $\dot{H}^{1-q}(\R^d)$. We compute
    \begin{align*}
        \langle \calS_{1,t}^\calH\rho_\tau^n, \calS_{2,t}^\calH\eta_\tau^n \rangle_{1-q} & = 
        \intd \abs{\xi}^{2(1-q)} \widehat{\calS_{1,t}^\calH \rho_\tau^n} (\xi) \overline{\widehat{\calS_{2,t}^\calH \eta_\tau^n} (\xi)} \d \xi \\ 
        & = \intd \abs{\xi}^{2(1-q)} \widehat{\Gamma_t \ast \rho_\tau^n} (\xi) \overline{\widehat{ \Gamma_t \ast \eta_\tau^n} (\xi)} \d \xi \\
        & = \intd \abs{\xi}^{2(1-q)} \abs{\hat{\Gamma}_t (\xi )}^2 \hat{\rho}_\tau^n (\xi) \overline{ \hat{\eta}_\tau^n (\xi)} \d \xi \\
        & \leq \max_\xi \abs{\xi}^2 \abs{\hat{\Gamma}_t (\xi)}^2 \intd \abs{\xi}^{-2q} \hat{\rho}_\tau^n (\xi) \overline{ \hat{\eta}_\tau^n (\xi)} \d \xi \\
        & \leq C(t) \langle \rho_\tau^n, \eta_\tau^n \rangle_{\dot{H}^{-q}(\R^d)}.
    \end{align*}
    Therefore, $(\calS_{1,t}^\calH \rho_\tau^n, \calS_{2,t}^\calH \eta_\tau^n) \in (\dot{H}^{1-s} (\R^d) \cap \dot{H}^{1-q} (\R^d) ) \times (\dot{H}^{1-r} (\R^d) \cap \dot{H}^{1-q} (\R^d) ).$ Thus, the map $t \mapsto \calF (\calS_{1,t}^\calH \rho_\tau^n, \calS_{2,t}^\calH \eta_\tau^n)$ is differentiable in $(0, + \infty)$.
    We now want to prove that that $t \mapsto \calF (\calS_{1,t}^\calH \rho_\tau^n, \calS_{2,t}^\calH \eta_\tau^n)$ in continuous in $t=0$. We first notice that $0 \leq \hat{\Gamma} \leq 1$, and thus $\widehat{\Gamma \ast \rho_\tau^n} (\xi) = \hat{\Gamma}(\xi)\hat{\rho}_\tau^n (\xi) \leq \hat{\rho}_\tau^n (\xi)$, and $\abs{\widehat{\Gamma \ast \rho_\tau^n} (\xi)}^2 = \abs{\hat{\Gamma} (\xi) \hat{\rho}_\tau^n (\xi)}^2 \leq \abs{\hat{\rho}_\tau^n (\xi)}^2$. It follows that $\calF(\calS_1^\calH \rho_\tau^n, \calS_2^\calH \eta_\tau^n) \leq \calF (\rho_\tau^n, \eta_\tau^n)$, and due to the lower semi-continuity of $\calF$ proved in Proposition \ref{prop:basic_propert}, we deduce the continuity at $0$. 
    We can now apply the mean value Theorem, obtaining that there is $\xi (t) \in (0,t)$ such that
    \begin{align*}
        & \frac{\calF(\rho_\tau^n, \eta_\tau^n) - \calF(\calS_{1,t}^\calH\rho_\tau^n, \calS_{2,t}^\calH\eta_\tau^n)}{t} \\
        & \quad = \norm{\calS_{1,\xi(t)}^\calH \rho_\tau^{n+1}}^2_{\dot{H}^{1-s}(\R^d)} + \norm{\calS_{2,\xi(t)}^\calH \eta_\tau^{n+1}}^2_{\dot{H}^{1-r}(\R^d)} + 2 \left\langle \calS_{1,\xi(t)}^\calH \rho_\tau^{n+1}, \calS_{2,\xi(t)}^\calH \eta_\tau^{n+1} \right\rangle_{1-q}.
    \end{align*}
    By using the non-negativity of the product space in $H^{1-q}$ and the lower semi-continuity of the norm, we arrive at
    \[
    \mathsf{D}^\calH \calF (\rho_\tau^n, \eta_\tau^n) \ge \norm{\rho_\tau^n}^2_{\dot{H}^{1-s}(\R^d)} + \norm{\eta_\tau^n}^2_{\dot{H}^{1-r}(\R^d)}.
    \]
    By Lemma \ref{flowinterlem}, we get
    \[
        \norm{\rho_\tau^n}^2_{\dot{H}^{1-s}(\R^d)} + \norm{\eta_\tau^n}^2_{\dot{H}^{1-r}(\R^d)} \leq \frac{\calH (\rho_\tau^{n-1}, \eta_\tau^{n-1}) - \calH(\rho_\tau^n, \eta_\tau^n)}{\tau}.
    \]
    Now let $T>0$. The estimate above implies that
    \[
        \int_0^T \norm{\rho_\tau (t)}^2_{\dot{H}^{1-s}(\R^d)} \d t \leq \sum_{k=0}^{N-1} \tau  \norm{\rho_\tau^n}^2_{\dot{H}^{1-s}(\R^d)}   \leq \calH (\rho_0, \eta_0)-\calH(\rho_\tau^N, \eta_\tau^N).
    \]
    Following \cite[Lemma 2.9]{difra_iorio}, one can prove that for an arbitrary $\rho \in \calP_2 (\R^d)$ there is a constant $C>0$ depending only on the dimension $d$ such that
    \begin{equation}
        \label{eq:log}
        \intd \rho (\log \rho + C(\abs{x}^2 +1)\d x \geq 0.
    \end{equation}
    We then get
    \[ 
        -\calH(\rho_\tau^N, \eta_\tau^N) \leq C \left( 1+ \mathrm{m}_2\left[\rho_\tau^N\right] + \mathrm{m}_2\left[\eta_\tau^N\right] \right).
    \]
    Due to the scheme \eqref{eq:jko_scheme}, we also have that
    \[
        \calF (\rho_\tau^N, \eta_\tau^N) + \frac{1}{2} \sum_{k=0}^{N-1} \tau \frac{\calW_2^2 (\rho_\tau^n, \eta_\tau^n), (\rho_\tau^{n+1}, \eta_\tau^{n+1})}{\tau^2} \leq \calF (\rho_0, \eta_0),
    \]
    and thus we compute 
    \begin{align*}
      \mathrm{m}_2\left[\rho_\tau^N\right] + \mathrm{m}_2\left[\eta_\tau^N\right] 
        & = \calW_2^2 ( (\rho_\tau^N,\eta_\tau^N),(\delta_0, \delta_0) ) \\
        & \leq \left( \sum_{k=0}^{N-1} \calW_2 ( (\rho_\tau^k, \eta_\tau^k), (\rho_\tau^{k+1}, \eta_\tau^{k+1})) + \calW_2 ((\rho_0, \eta_0), (\delta_0, \delta_0)) \right)^2 \\
        & \leq 2 \left( \sum_{k=0}^{N-1} \tau \frac{\calW_2 ( (\rho_\tau^k, \eta_\tau^k), (\rho_\tau^{k+1}, \eta_\tau^{k+1}))}{\tau} \right)^2 + 2  \calW_2^2 ((\rho_0, \eta_0), (\delta_0, \delta_0)) \\
        & \leq 2 N \tau \sum_{k=0}^{N-1} \tau \frac{\calW_2^2 ( (\rho_\tau^k, \eta_\tau^k), (\rho_\tau^{k+1}, \eta_\tau^{k+1}))}{\tau^2} + 2  \calW_2^2 ((\rho_0, \eta_0), (\delta_0, \delta_0)) \\
        & \leq 4 T \calF (\rho_0, \eta_0) + 2  \calW_2^2 ((\rho_0, \eta_0), (\delta_0, \delta_0)),
    \end{align*}
    where we used the relations $\calF \geq 0$ and $N\tau = T$. Combining all the estimates, by renaming the constant $C$, we arrive at
    \[
        \int_0^T \norm{\rho_\tau (t)}^2_{\dot{H}^{1-s}(\R^d)} \d t \leq \calH (\rho_0, \eta_0) + C\left( 1 + T \calF (\rho_0, \eta_0) +\mathrm{m}_2\left[\rho_0\right] + \mathrm{m}_2\left[\eta_0\right] \right).
    \]
    By performing a similar argument for the second species, we complete the proof.
\end{proof}

\begin{cor}\label{cor:degiorgireg}
Under the assumptions of Proposition \ref{prop:est_auxiliary}, for any $t\in(0, T]$ we have
\begin{equation*}
    (\tilde{\rho}_\tau(t), \tilde{\eta}_\tau(t))\in \dot{H}^{1-s} (\R^d) \times \dot{H}^{1-r}(\R^d),
\end{equation*}
where $(\tilde{\rho}_\tau(t), \tilde{\eta}_\tau(t))$ is the De Giorgi variational interpolation defined in \eqref{e:degiorgi}.
\end{cor}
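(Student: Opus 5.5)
The plan is to run the flow interchange argument of Proposition \ref{prop:est_auxiliary} once more, now applied to the minimisation problem \eqref{e:degiorgi} that defines the De Giorgi interpolation. Fix $t\in((n-1)\tau,n\tau]$ and set $\sigma\coloneqq t-(n-1)\tau\in(0,\tau]$. By definition, $(\tilde{\rho}_\tau(t),\tilde{\eta}_\tau(t))$ minimises
\[
\frac{1}{2\sigma}\calW_2^2\big((\rho_\tau^{n-1},\eta_\tau^{n-1}),(\rho,\eta)\big)+\calF(\rho,\eta).
\]
This is exactly one JKO step with time step $\sigma$ in place of $\tau$, started from $(\rho_\tau^{n-1},\eta_\tau^{n-1})$.

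First I compare the minimiser with its heat-flow perturbation $(\calS_{1,h}^\calH\tilde{\rho}_\tau(t),\calS_{2,h}^\calH\tilde{\eta}_\tau(t))$. Dividing by $h$ and letting $h\downarrow0$, together with the E.V.I. \eqref{eq:evi} for the heat flow, gives
\[
\sigma\,\mathsf{D}^\calH\calF(\tilde{\rho}_\tau(t),\tilde{\eta}_\tau(t))\le \calH(\rho_\tau^{n-1},\eta_\tau^{n-1})-\calH(\tilde{\rho}_\tau(t),\tilde{\eta}_\tau(t)).
\]
Next I reuse the Fourier computation from the proof of Proposition \ref{prop:est_auxiliary}. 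It uses only $\calF(\tilde\rho_\tau(t),\tilde\eta_\tau(t))<\infty$, which holds because $\calF(\tilde\rho_\tau(t),\tilde\eta_\tau(t))\le\calF(\rho_\tau^{n-1},\eta_\tau^{n-1})\le\calF(\rho_0,\eta_0)$ by minimality. Combined with the non-negativity of the cross term $\langle\cdot,\cdot\rangle_{1-q}$ between the non-negative heat-regularised densities, it yields the lower bound
\[
\mathsf{D}^\calH\calF(\tilde{\rho}_\tau(t),\tilde{\eta}_\tau(t))\ge \norm{\tilde{\rho}_\tau(t)}^2_{\dot H^{1-s}(\R^d)}+\norm{\tilde{\eta}_\tau(t)}^2_{\dot H^{1-r}(\R^d)}.
\]

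It remains to show the right-hand side of the first inequality is finite. The key input is that $\calH(\rho_\tau^{n-1},\eta_\tau^{n-1})<\infty$. For $n=1$ this is the hypothesis $\calH(\rho_0,\eta_0)<\infty$. For $n\ge2$ it follows from the telescoping bound in the proof of Proposition \ref{prop:est_auxiliary}, which gives $\calH(\rho_\tau^{k},\eta_\tau^{k})\le\calH(\rho_0,\eta_0)$. In the other direction, $-\calH(\tilde\rho_\tau(t),\tilde\eta_\tau(t))$ is bounded above by \eqref{eq:log}, provided the second moments of the interpolation are finite. These moments are controlled since
\[
W_2^2\big((\tilde\rho_\tau(t),\tilde\eta_\tau(t)),(\rho_\tau^{n-1},\eta_\tau^{n-1})\big)\le 2\sigma\,\calF(\rho_0,\eta_0),
\]
and $(\rho_\tau^{n-1},\eta_\tau^{n-1})$ has bounded second moments. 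Dividing by $\sigma>0$ then gives finiteness of both norms, which is the claim.

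The main obstacle is the dissipation step: justifying that $h\mapsto\calF(\calS_h^\calH\tilde\rho_\tau(t),\calS_h^\calH\tilde\eta_\tau(t))$ is continuous at $0$ and differentiable for $h>0$, and that the cross term can be dropped. Exactly as in the proof of Proposition \ref{prop:est_auxiliary}, I would handle continuity at $0$ with the bound $0\le\hat\Gamma_h\le1$ together with the lower semicontinuity from Proposition \ref{prop:basic_propert}. The passage $h\downarrow0$ in the squared norms would then follow by lower semicontinuity of the $\dot H^{1-s}$ and $\dot H^{1-r}$ norms.
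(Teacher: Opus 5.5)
You are following the route the paper intends. The corollary is the flow interchange of Proposition~\ref{prop:est_auxiliary}, run on the minimisation \eqref{e:degiorgi} with step $\sigma=t-(n-1)\tau$. Your bookkeeping is sound and more explicit than the paper's: finiteness of $\calH(\rho_\tau^{n-1},\eta_\tau^{n-1})$, the bound $\calW_2^2\le 2\sigma\calF(\rho_0,\eta_0)$ fed into \eqref{eq:log}, and $\calF(\tilde\rho_\tau(t),\tilde\eta_\tau(t))\le\calF(\rho_0,\eta_0)$.

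\textbf{The gap.} The step that fails is the lower bound on $\mathsf{D}^\calH\calF$. You discard $2\langle\calS^\calH_{1,h}\tilde\rho_\tau(t),\calS^\calH_{2,h}\tilde\eta_\tau(t)\rangle_{1-q}$ on the grounds that it is non-negative for non-negative densities, and this is false. For $1-q\in(0,1)$ one has $\langle f,g\rangle_{1-q}=\intd f\,(-\Delta)^{1-q}g\d x$, and $(-\Delta)^{1-q}g<0$ off the support of a non-negative $g$. Equivalently, in \eqref{eq:product_H_s}, if $f,g\ge0$ have disjoint supports the integrand is $-f(x)g(y)-f(y)g(x)\le0$, so $\langle f,g\rangle_{1-q}<0$. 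By continuity the same holds for $\Gamma_h\ast f$ and $\Gamma_h\ast g$ with $h$ small. Segregated configurations are exactly what a repulsive cross-interaction favours, so this is not an exotic case. The paper's proof of Proposition~\ref{prop:est_auxiliary} rests on the same claim (``non-negativity of the product space in $H^{1-q}$''), so you inherited the gap rather than created it. It also undermines your bound $\calH(\rho_\tau^k,\eta_\tau^k)\le\calH(\rho_0,\eta_0)$, which needs $\mathsf{D}^\calH\calF\ge0$.

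\textbf{A repair when $q>(s+r)/2$.} The cross term has to be absorbed rather than dropped. By Cauchy--Schwarz in Fourier, $\abs{\langle f,g\rangle_{1-q}}\le\norm{f}_{\dot H^{a}(\R^d)}\norm{g}_{\dot H^{b}(\R^d)}$ whenever $a+b=2-2q$. Choose $-s\le a<1-s$ and $-r\le b<1-r$. Interpolate via \eqref{eq:interp_H_dot} against $\dot H^{-s}$ and $\dot H^{-r}$; both are bounded by $2\calF\le2\calF(\rho_0,\eta_0)$ and non-increasing along the heat flow. Young's inequality then gives
\[
\mathsf{D}^\calH\calF(\tilde\rho_\tau(t),\tilde\eta_\tau(t))\ge\tfrac12\bigl(\norm{\tilde\rho_\tau(t)}^2_{\dot H^{1-s}(\R^d)}+\norm{\tilde\eta_\tau(t)}^2_{\dot H^{1-r}(\R^d)}\bigr)-C\calF(\rho_0,\eta_0).
\]
Your argument then closes, with $\calH(\rho_\tau^k,\eta_\tau^k)\le\calH(\rho_0,\eta_0)+Ck\tau\calF(\rho_0,\eta_0)$ replacing monotonicity. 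Such $a,b$ exist if and only if $q>(s+r)/2$.

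\textbf{When $q\le(s+r)/2$.} This case is allowed by the corollary's hypotheses and even by the range in Theorem~\ref{thm:intro_main}, e.g.\ $s=r=4/5$, $q=1/2$. There the symbol $\abs{\xi}^{2-2s}\abs{z_1}^2+\abs{\xi}^{2-2r}\abs{z_2}^2+2\abs{\xi}^{2-2q}\mathrm{Re}(z_1\overline{z_2})$ is indefinite at high frequencies, and only degenerate when $2q=s+r$. Consequently no bound $\mathsf{D}^\calH\calF(\mu)\ge c\bigl(\norm{\mu_1}^2_{\dot H^{1-s}(\R^d)}+\norm{\mu_2}^2_{\dot H^{1-r}(\R^d)}\bigr)-C$ with $c>0$ holds for general non-negative $\mu$. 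You can see this by testing with interleaved non-negative densities oscillating at high frequency. So the heat-flow interchange needs an additional idea in that regime.
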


\subsection{Second flow interchange and \texorpdfstring{$L^\infty$}{L-infinity} estimate.}

In this subsection, we want to investigate a regularity result about the solutions to system \eqref{eq:main_syst_1}. In particular, our aim is to prove that if the initial datum is in $(\calP (\R^d) \cap L^p(\R^d))^2$, with $p \in (1, + \infty]$, then the solution $(\rho, \eta)$ keeps this regularity.

Consider the decoupled system
\begin{equation}
    \label{eq:auxiliary_p}
    \begin{dcases}
        \de_t u_1 = \Delta u_1^p + \varepsilon \Delta u_1, \\
        \de_t u_2 = \Delta u_2^p + \varepsilon \Delta u_2,
    \end{dcases}
\end{equation}
that can be seen as the gradient flow of
\[
    \calG(u_1, u_2) = \frac{1}{p-1} \intd [u_1(x)^p + u_2(x)^p]\d x + \varepsilon \intd [u_1(x) \log u_1(x)\d x + u_2(x) \log u_2(x) ]\d x
\]
in the space $(\calP_2(\R^d)\times\calP_2(\R^d), \calW_2)$, with $p \in (1, + \infty)$, and $\varepsilon >0$.
We set $\calS^\calG = (\calS_1^\calG, \calS_2^\calG)$ the semigroup generated be \eqref{eq:auxiliary_p}.

\begin{prop}
    Let $T >0$ and $p \in (1, + \infty)$. Assume $(\rho_0, \eta_0) \in \calD (\calF)$ such that $\calG (\rho_0, \eta_0) < + \infty$. Then, the piecewise constant curve $(\rho_\tau, \eta_\tau)$ defined in \eqref{eq:seq_interpol} fulfils
    \[
        \norm{\rho_\tau}_{L^\infty ((0,T); L^p (\R^d))} \norm{\eta_\tau}_{L^\infty ((0,T); L^p (\R^d))} \leq \norm{\rho_0}_{L^p(\R^d)} + \norm{\eta_0}_{L^p (\R^d)},
    \]
    and the limit curve $(\rho, \eta) \in L^\infty ((0,T), L^p(\R^d))$. 
    Furthermore, a similar estimate holds for the case $p=+\infty$.
\end{prop}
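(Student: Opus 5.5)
We run a second flow interchange, now with the auxiliary functional $\calG$ and its semigroup $\calS^\calG$, exactly as in Proposition \ref{prop:est_auxiliary}. Comparing the minimiser $(\rho_\tau^{n+1},\eta_\tau^{n+1})$ of \eqref{eq:jko_scheme} with the competitor $\calS^\calG_h(\rho_\tau^{n+1},\eta_\tau^{n+1})$, dividing by $h$, letting $h\downarrow 0$ and using the E.V.I. for the flow \eqref{eq:auxiliary_p} gives
\[
\tau\,\mathsf{D}^\calG\calF(\rho_\tau^{n+1},\eta_\tau^{n+1})\le \calG(\rho_\tau^n,\eta_\tau^n)-\calG(\rho_\tau^{n+1},\eta_\tau^{n+1}).
\]
The E.V.I. requires displacement convexity of $\calG$. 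This holds for $p>1$ by McCann's condition, and for the entropy part. Hence it suffices to show $\mathsf{D}^\calG\calF\ge 0$. Then $\calG$ is non-increasing along the scheme, and letting $\varepsilon\to0$ we get $\norm{\rho_\tau^n}_{L^p}^p+\norm{\eta_\tau^n}_{L^p}^p\le \norm{\rho_0}_{L^p}^p+\norm{\eta_0}_{L^p}^p$, which gives the claimed bound.

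To compute the dissipation, write $(u_1,u_2)=\calS^\calG_t(\rho,\eta)$, which is smooth and positive for $t>0$ thanks to the $\varepsilon$-viscosity. Formally,
\[
-\frac{d}{dt}\calF(u_1,u_2)=\langle u_1,u_1^p+\varepsilon u_1\rangle_{1-s}+\langle u_2,u_2^p+\varepsilon u_2\rangle_{1-r}+\langle u_1,u_2^p+\varepsilon u_2\rangle_{1-q}+\langle u_2,u_1^p+\varepsilon u_1\rangle_{1-q}.
\]
By Proposition \ref{prop:product} the diagonal terms are non-negative; Stroock–Varopoulos even gives $\norm{u_i^{(p+1)/2}}^2_{\dot H^{1-s}}$-type control. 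The $\varepsilon$-parts combine into $\varepsilon\bigl(\norm{u_1}^2_{\dot H^{1-s}}+\norm{u_2}^2_{\dot H^{1-r}}+2\langle u_1,u_2\rangle_{1-q}\bigr)$. This is non-negative because $\langle f,g\rangle$ in Fourier is a product of positive-definite kernels: $\widehat{\calF}$ as a quadratic form is $|\xi|^{-2s}|\hat f|^2+|\xi|^{-2r}|\hat g|^2+2|\xi|^{-2q}\mathrm{Re}\,\hat f\bar{\hat g}$. This form is pointwise non-negative if $|\xi|^{-4q}\le |\xi|^{-2s-2r}$, which fails in general. So I will instead use that the relevant weights are dominated, writing the cross term via Cauchy–Schwarz and interpolation \eqref{eq:interp_H_dot}. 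The condition $\max\{s,r\}/2<q<\min\{s+1,r+1\}/2$ places $1-q$ between the self-exponents appropriately.

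The main obstacle is the nonlinear cross terms $\langle u_1,u_2^p\rangle_{1-q}$, which have no sign in general. My first attempt is to use \eqref{eq:product_H_s}: the kernel $|x-y|^{-d-2(1-q)}$ is positive, and the integrand $(u_1(x)-u_1(y))(u_2^p(x)-u_2^p(y))$ is indefinite. I would bound it by Young's inequality against the Stroock–Varopoulos terms $\langle u_i,u_i^p\rangle$, at the level of the difference quotients. This uses $|a-b||c^p-d^p|$ with the monotonicity structure, and it only works when the kernels are comparable. That comparability is where the range of $q$ enters, through interpolation between the $\dot H^{1-s}$ and $\dot H^{1-r}$-type quantities and the mass/$L^p$ bounds via \eqref{ineq:intep}. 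If exact non-negativity fails, I aim for $\mathsf{D}^\calG\calF\ge -C\,\calG$. A discrete Gronwall would then give an $L^p$ bound with a factor $e^{CT}$. Obtaining the stated constant-free inequality would require the cross terms to be absorbed exactly, and I regard this as the delicate point.

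Finally we pass to the limit. The bound is uniform in $\tau$, so narrow convergence from Theorem \ref{thm:conv_sol} and lower semicontinuity of $L^p$ norms give $(\rho,\eta)\in L^\infty((0,T);L^p)$. For $p=\infty$, we take $p\to\infty$ in the estimate with constants uniform in $p$, using $\norm{f}_{L^p}\to\norm{f}_{L^\infty}$ and interpolation with mass $1$.
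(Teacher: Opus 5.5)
\textbf{There is a genuine gap, and it is the step you flag yourself.} Everything rests on $\mathsf{D}^\calG\calF\ge 0$, i.e.\ on the sign, for $(u_1,u_2)=\calS^\calG_t(\rho,\eta)$, of
\[
\langle u_1^p,u_1\rangle_{1-s}+\langle u_2^p,u_2\rangle_{1-r}+\langle u_1^p,u_2\rangle_{1-q}+\langle u_2^p,u_1\rangle_{1-q}+\varepsilon\bigl(\norm{u_1}^2_{\dot{H}^{1-s}}+\norm{u_2}^2_{\dot{H}^{1-r}}+2\langle u_1,u_2\rangle_{1-q}\bigr).
\]
You never establish this, and none of the repairs you sketch can work.

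\emph{The cross terms have no sign.} By \eqref{eq:product_H_s}, if $f,g\ge 0$ have disjoint supports, then $\langle f,g\rangle_\sigma=-2c\iint\abs{x-y}^{-d-2\sigma}f(x)g(y)\d x\d y<0$ with $c>0$.

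\emph{They cannot be absorbed into the diagonal terms.} Your remark that the hypothesis on $q$ places $1-q$ between $1-s$ and $1-r$ is false: take $s=r$ and $q\neq s$. More decisively, use scaling. Let $\rho=\phi_L=L^{-d}\phi(\cdot/L)$ and $\eta=\psi_L$, with $\phi,\psi$ smooth probability densities with disjoint supports. At $(\phi_L,\psi_L)$ the $\varepsilon$-free part equals $L^{-dp-2}(L^{2s}a+L^{2r}b+L^{2q}c)$ with $a,b\ge 0>c$. The $\varepsilon$-part is $\varepsilon L^{-d-2}(\cdots)$ with the same sign pattern. Hence the dissipation is strictly negative as $L\to\infty$ if $q>\max\{s,r\}$, and as $L\to 0$ if $q<\min\{s,r\}$. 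Both regimes are allowed by the hypothesis, e.g.\ $s=r$, $q\neq s$.

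\emph{Consequence for the estimate.} At $\varepsilon=0$ this expression is exactly $-\frac{1}{p-1}\frac{d}{dt}\intd(\rho^p+\eta^p)$ along smooth solutions of \eqref{eq:main_syst_1}. So for such data $\intd(\rho^p+\eta^p)$ genuinely increases at first, and no Young or interpolation argument can give the constant-free bound.

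\emph{The Gronwall fallback.} It would not give the stated estimate. Moreover, for $q<\min\{s,r\}$ the same family gives $\mathsf{D}^\calG\calF/\calG\to-\infty$ as $L\to 0$. So $\mathsf{D}^\calG\calF\ge -C\calG$ cannot hold with a state-independent $C$.

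\textbf{Comparison with the paper.} The paper does not overcome this obstacle. It asserts in one line that $\langle u_2,u_1^p\rangle_{1-q}$, $\langle u_1,u_2^p\rangle_{1-q}$ and $\langle u_1,u_2\rangle_{1-q}$ are non-negative, citing Proposition \ref{prop:product}. Part (i) of that proposition only gives $\langle f,F(f)\rangle_\sigma\ge 0$ for one and the same $f$. It says nothing about $\langle g,F(f)\rangle_\sigma$ with $g\neq f$, and the computation above shows the assertion is false. So you have not missed an idea that the paper supplies. Your suspicion is correct, and in this generality the stated inequality is not obtainable along this route; it would need an additional hypothesis or a different conclusion.

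\textbf{What is fine.} The remaining ingredients of your outline coincide with the paper's and are correct: displacement convexity of $\calG$ via McCann's condition, the E.V.I.\ comparison, lower semicontinuity as $\tau\to 0$, and the passage $p\to\infty$.

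\textbf{A smaller point.} Even a bound on $\intd(\rho^p+\eta^p)$ only controls each of $\norm{\rho}_{L^p}$ and $\norm{\eta}_{L^p}$ by $(\norm{\rho_0}^p_{L^p}+\norm{\eta_0}^p_{L^p})^{1/p}$. It does not give the displayed product bound, nor the sum of the two norms with constant one.
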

\begin{proof}
Considering the Fourier transform of the coordinates of the semigroup $\calS^\calG$, we have
\begin{align*}
    & \de_t \widehat{\calS_{1,t}^\calG \rho_\tau^k} (\xi) = - \abs{\xi}^2 \left[ \widehat{ (\calS_{1,t}^\calG \rho_\tau^k)^p} (\xi) + \varepsilon \widehat{\calS_{1,t}^\calG \rho_\tau^k} (\xi) \right], \\
    & \de_t \widehat{\calS_{2,t}^\calG \eta_\tau^k} (\xi) = - \abs{\xi}^2 \left[ \widehat{(\calS_{2,t}^\calG \eta_\tau^k})^p (\xi) +  \varepsilon \widehat{\calS_{2,t}^\calG \eta_\tau^k} (\xi)\right].
\end{align*}
Following \cite[Lemma 4.7]{lis_main_seg}, we have that both $\calS_{1,t}^\calG \rho_\tau^k$ and $\calS_{2,t}^\calG \eta_\tau^k$ are smooth, bounded and strictly positive for $t >0$; in particular 
\[
    \calS_{1,t}^\calG \rho_\tau^k \in H^{1-s}(\R^d) \cap H^{1-q}(\R^d) \qquad \mbox{and} \qquad \calS_{2,t}^\calG \eta_\tau^k \in H^{1-r}(\R^d) \cap H^{1-q}(\R^d).
\]
Arguing similarly as in Proposition \ref{prop:est_auxiliary}, we have that $\rho_\tau^k \in \dot{H}^{1-s} (\R^d)$ and $\eta_\tau^k \in \dot{H}^{1-r} (\R^d)$, and both $\rho_\tau^k$ and $\eta_\tau^k$ belong to $L^1(\R^d)$ by construction. By using inequality \eqref{ineq:intep} we deduce
\[
    \rho_\tau^k \in L^2(\R^d), \qquad \mbox{and} \qquad \eta_\tau^k \in L^2(\R^d).
\]
Furthermore, due to the monotonic increasing property of the map $f \mapsto f^p$ for $t>0$ and $p>1$, we get
\begin{align*}
    & \norm{\calS_{1,t}^\calG \rho_\tau^k} _{L^2 (\R^d)} \leq \norm{\rho_\tau^k}_{L^2(\R^d)}, \\
    & \norm{\calS_{2,t}^\calG \eta_\tau^k} _{L^2 (\R^d)} \leq \norm{\eta_\tau^k}_{L^2(\R^d)}.
\end{align*}
We obtain
\begin{equation}
    \label{eq:reg_flow}
    \calS_{1,t}^\calG \rho_\tau^k \in \dot{H}^{1-s}(\R^d)\cap \dot{H}^{1-q}(\R^d) \qquad \mbox{and} \qquad \calS_{2,t}^\calG \eta_\tau^k \in \dot{H}^{1-r}(\R^d)\cap \dot{H}^{1-q}(\R^d).
\end{equation}
Following the same strategy as in the Proposition \ref{prop:est_auxiliary}, we have
\begin{equation}
    \label{eq:proof_2aux_1}
        \tau \mathsf{D}^\calG \calF (\rho_\tau^{n+1}, \eta_\tau^{n+1} ) \leq \calG (\rho_\tau^n,\eta_\tau^n) - \calG (\rho_\tau^{n+1},\eta_\tau^{n+1}).
    \end{equation}
    and
    \begin{equation}
        \label{eq:proof_2aux_2}
        \mathsf{D}^\calG \calF (\rho_\tau^{n+1}, \eta_\tau^{n+1}) = \limsup_{h \downarrow 0} \int_0^1 \left( - \frac{d}{dz} \bigg\lvert_{z=ht} \calF (\calS_{1,z}^\calG \rho_\tau^{n+1}, \calS_{2,z}^\calG \eta_\tau^{n+1}) \right) \d t.
    \end{equation}
We then compute
\begin{align*}
    \frac{d}{dt}& \calF (\calS_{1,t}^\calG \rho_\tau^k, \calS_{2,t}^\calG \eta_\tau^k) \\
    & = -\frac{1}{(2\pi)^d} \intd \abs{\xi}^{-2(s-1)} \widehat{(\calS_{1,t}^\calG \rho_\tau^k)^p} (\xi) \overline{ \widehat{\calS_{1,t}^\calG \rho_\tau^k} (\xi) }\d \xi - \frac{1}{(2 \pi)^d} \varepsilon \intd \abs{\xi}^{-2(s-1)} \widehat{ \calS_{1,t}^\calG \rho_\tau^k } (\xi) \overline{ \widehat{\calS_{1,t}^\calG \rho_\tau^k} (\xi) }\d \xi \\
    & \quad -\frac{1}{(2\pi)^d} \intd \abs{\xi}^{-2(r-1)} \widehat{(\calS_{2,t}^\calG \eta_\tau^k)^p} (\xi) \overline{ \widehat{\calS_{2,t}^\calG \eta_\tau^k} (\xi) }\d \xi - \frac{1}{(2 \pi)^d} \varepsilon \intd \abs{\xi}^{-2(r-1)} \widehat{ \calS_{2,t}^\calG \eta_\tau^k } (\xi) \overline{ \widehat{\calS_{2,t}^\calG \eta_\tau^k} (\xi) }\d \xi \\
    & \quad - \frac{1}{(2 \pi)^d} \intd \abs{\xi}^{-2(q-1)} \overline{\widehat{ \calS_{2,t}^\calG \eta_\tau^k} (\xi) } \left( \widehat{ (\calS_{1,t}^\calG \rho_\tau^k)^p } (\xi) + \varepsilon \widehat{ \calS_{1,t}^\calG \rho_\tau^k } (\xi) \right) \d \xi \\
    & \quad - \frac{1}{(2 \pi)^d} \intd \abs{\xi}^{-2(q-1)} \widehat{ \calS_{1,t}^\calG \rho_\tau^k} (\xi)  \left( \overline{ \widehat{ (\calS_{2,t}^\calG \eta_\tau^k)^p } (\xi) } + \varepsilon \overline{ \widehat{ \calS_{2,t}^\calG \eta_\tau^k } (\xi) } \right) \d \xi \\
    & = - \left\langle (\calS_{1,t}^\calG \rho_\tau^k )^p, \calS_{1,t}^\calG \rho_\tau^k \right\rangle_{1-s} - \left\langle (\calS_{2,t}^\calG \eta_\tau^k )^p, \calS_{2,t}^\calG \eta_\tau^k \right\rangle_{1-r}\\
    & \quad - \varepsilon \left\langle \calS_{1,t}^\calG \rho_\tau^k, \calS_{1,t}^\calG \rho_\tau^k \right\rangle_{1-s} - \varepsilon \left\langle \calS_{2,t}^\calG \eta_\tau^k, \calS_{2,t}^\calG \eta_\tau^k \right\rangle_{1-r} \\
    & \quad - \left\langle \calS_{2,t}^\calG \eta_\tau^k , ( \calS_{1,t}^\calG \rho_\tau^k)^p \right\rangle_{1-q} - \left\langle \calS_{1,t}^\calG \rho_\tau^k, (\calS_{2,t}^\calG \eta_\tau^k)^p \right\rangle_{1-q} \\
    & \quad- \varepsilon \left\langle \calS_{2,t}^\calG \eta_\tau^k , \calS_{1,t}^\calG \rho_\tau^k \right\rangle_{1-q} - \varepsilon \left\langle \calS_{1,t}^\calG \rho_\tau^k, \calS_{2,t}^\calG \eta_\tau^k \right\rangle_{1-q}.
\end{align*}
Since the function $v \mapsto v^p$ is non-decreasing and locally Lipschitz as $p>1$, due to \eqref{eq:reg_flow} and Proposition \ref{prop:product} we get
\[
    (\calS_{1,t}^\calG \rho_\tau^k )^p \in \dot{H}^{1-s}(\R^d) \cap \dot{H}^{1-q}(\R^d), \quad \mbox{and} \quad (\calS_{2,t}^\calG \eta_\tau^k )^p \in \dot{H}^{1-r}(\R^d) \cap \dot{H}^{1-q}(\R^d),
\]
and, furthermore, the terms
\[
    \left\langle (\calS_{1,t}^\calG \rho_\tau^k )^p, \calS_{1,t}^\calG \rho_\tau^k \right\rangle_{1-s}, \quad \left\langle (\calS_{2,t}^\calG \eta_\tau^k )^p, \calS_{2,t}^\calG \eta_\tau^k \right\rangle_{1-r}, \quad 
    \left\langle \calS_{2,t}^\calG \eta_\tau^k , ( \calS_{1,t}^\calG \rho_\tau^k)^p \right\rangle_{1-q}, \quad \left\langle \calS_{1,t}^\calG \rho_\tau^k, (\calS_{2,t}^\calG \eta_\tau^k)^p \right\rangle_{1-q}
\]
are non-negative. More, due to \eqref{eq:reg_flow}, the quantities
\[
    \left\langle \calS_{1,t}^\calG \rho_\tau^k, \calS_{1,t}^\calG \rho_\tau^k \right\rangle_{1-s}, \quad \left\langle \calS_{2,t}^\calG \eta_\tau^k, \calS_{2,t}^\calG \eta_\tau^k \right\rangle_{1-r}, \quad \left\langle \calS_{2,t}^\calG \eta_\tau^k , \calS_{1,t}^\calG \rho_\tau^k \right\rangle_{1-q}
\]
are finite and non-negative. We then have 
\[
    \frac{d}{dt} \calF (\calS_{1,t}^\calG \rho_\tau^k, \calS_{2,t}^\calG \eta_\tau^k) \leq 0.
\]
Combining this with \eqref{eq:proof_2aux_1} and \eqref{eq:proof_2aux_2} we get
\[
    \calG (\rho_\tau^{n+1}, \eta_\tau^{n+1}) \leq \calG (\rho_\tau^n, \eta_\tau^n), 
\]
i.e., 
\[
    \calG (\rho_\tau^n, \eta_\tau^n) \leq \calG(\rho_0, \eta_0),
\]
for all $n \in \N$.
Now, since by \eqref{eq:log} we control the logarithmic part of $\calG$, by letting $\varepsilon \downarrow 0^+$, for all $p \in (1, + \infty)$ we deduce
\[
    \intd \left[ \rho_\tau^n (x) ^p + \eta_\tau^n (x) ^p \right] \d x \leq \intd \left[ \rho_0 (x) ^p + \eta_0 (x)^p \right] \d x, 
\]
that implies
\[
    \intd \left[ \rho_\tau (x,t) ^p + \eta_\tau (x,t) ^p \right] \d x \leq \intd \left[ \rho_0 (x) ^p + \eta_0 (x)^p \right] \d x, 
\]
for all $p \in (1, +\infty)$ and all $t \in [0,T]$. A similar estimate can be obtained for $p=+\infty$. Indeed,
\begin{align*}
    \norm{\rho_\tau (t, \cdot)}_{L^\infty (\R^d)} + \norm{\eta_\tau (t, \cdot)}_{L^\infty (\R^d)} & \leq \limsup_{p \to +\infty} \left[ \norm{\rho_\tau (t, \cdot)}_{L^p (\R^d)} + \norm{\eta_\tau (t, \cdot)}_{L^p (\R^d)}\right] \\
    & \leq \limsup_{p \to + \infty} \left[ \norm{\rho_0}_{L^p (\R^d)} + \norm{\eta_0}_{L^p (\R^d)} \right] \\
    & \leq \limsup_{p \to + \infty} \left[ \norm{\rho_0}^\frac{p-1}{p}_{L^\infty(\R^d)} \norm{\rho_0}^\frac{1}{p}_{L^1(\R^d)} + \norm{\eta_0}^\frac{p-1}{p}_{L^\infty(\R^d)} \norm{\eta_0}^\frac{1}{p}_{L^1(\R^d)} \right] \\
    & = \norm{\rho_0}_{L^\infty (\R^d)} + \norm{\eta_0}_{L^\infty (\R^d)}.
\end{align*}
We conclude that the subsequence $\{ (\rho_{\tau_k}, \eta_{\tau_k}) \}_k$ provided in Theorem \ref{thm:conv_sol} is uniformly bounded in $L^\infty ([0,T], L^p (\R^d))^2$, and then it admits a subsequence $\{ (\rho_{\tau_{k'}}, \eta_{\tau_{k'}}) \}_{k'}$ converging weakly for $p \in (1,+\infty)$ and weakly-$\ast$ for $p=+\infty$ in $L^p ([0,T], \R^d)^2$. Such limit coincides with $(\rho,\eta)$ on $[0,T]$ due to the narrow convergence proved in Theorem \ref{thm:conv_sol} and satisfies the same estimate.
\end{proof}

\section{Convergence to weak solutions} \label{sec:convergence}

In this section we deal with the consistency of the scheme, proving the convergence towards a weak solution to \eqref{eq:main_syst_1}. This result is made up by several steps that we included into the two following lemmas for the reader's convenience. Notice that we present here a detailed proof for the first species. A similar argumentation can be applied for the second species.
\begin{lemma}
    \label{lemma:convergence}
    Let $T>0$ be fixed and consider the initial datum $(\rho_0, \eta_0) \in X_2^{s,q}\times X_2^{r,q}$. Let $(\rho_\tau, \eta_\tau)$ be the piecewise constant interpolation defined in \eqref{eq:seq_interpol} and let  $(\rho_{\tau_n}, \eta_{\tau_n})_n$ be a subsequence that converges to the limit curve $(\rho, \eta)$ as $\tau_n \to 0$, as given in Theorem \ref{thm:conv_sol}. Then
    \begin{itemize}
        \item[(i)] $(\rho, \eta) \in H^{1-s}(\R^d) \times H^{1-r} (\R^d)$, and $\rho_{\tau_n} \to \rho$, and $\eta_{\tau_n} \to \eta$ strongly in $L^2 ((0,T); L^2_\mathrm{loc} (\R^d))$ as $n \to + \infty$. 
        \item[(ii)] Furthermore, if
        \[
            \max \left\{ \frac{s}{2}, \frac{r}{2} \right\} < q < \min \left\{ \frac{s+1}{2}, \frac{r+1}{2} \right\},
        \]
        then $\nabla K_s \ast \rho_{\tau_n} + \nabla K_q \ast \eta_{\tau_n} \rightarrow \nabla K_s \ast \rho + \nabla K_q \ast \eta$, and $ \nabla K_r \ast \eta_{\tau_n} + \nabla K_q \ast \rho_{\tau_n} \rightarrow \nabla K_r \ast \eta + \nabla K_q \ast \rho$ weakly $L^2((0,T), L^2 (\R^d))$ as $n \to + \infty$.
    \end{itemize}
\end{lemma}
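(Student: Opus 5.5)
For part (i) I would follow the Aubin--Lions strategy of \cite{lis_main_seg}, in its version for metric spaces (the Rossi--Savaré generalisation of Aubin--Lions). Two ingredients are already available. Proposition \ref{prop:est_auxiliary} gives the bound $\int_0^T \norm{\rho_\tau}_{\dot H^{1-s}}^2 \d t \le C$. Theorem \ref{thm:conv_sol} gives narrow equicontinuity in $W_2$ and uniform second moments.

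To pass from homogeneous to inhomogeneous norms, I would combine the energy bound $\norm{\rho_\tau(t)}_{\dot H^{-s}}^2 \le 2\calF(\rho_0,\eta_0)$ with the interpolation inequality \eqref{eq:interp_H_dot}. Since $0$ lies between $-s$ and $1-s$, this yields
\[
\int_0^T \norm{\rho_\tau}_{L^2}^2 \d t \le C .
\]
Hence $\rho_\tau$ is bounded in $L^2(0,T;H^{1-s})$.

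Next I would fix a cutoff $\phi\in\calS(\R^d)$. By Proposition \ref{prop:inequalities}(ii)--(iii), the functional $\rho\mapsto \norm{\phi\rho}_{H^{1-s}}^2$ (set to $+\infty$ off this space) has sublevels that are relatively compact in $L^2_{\mathrm{loc}}$. The $W_2$-equicontinuity plays the role of the weak time-derivative bound. Applying the Rossi--Savaré theorem with the pseudo-distance given by the $W_2$ distance (or a negative Sobolev norm after localising) then gives strong convergence in $L^2(0,T;L^2_{\mathrm{loc}})$. The limit is identified with $\rho$ through the narrow convergence.

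Lower semicontinuity of the norms, together with Fatou in time, gives $\rho(t)\in H^{1-s}$ for a.e.\ $t$. The same argument applies to $\eta$ with $r$ in place of $s$.

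For part (ii) I would split the velocity field into the self-term and the cross-term. For the self-term, $\widehat{\nabla K_s*\rho}=i\xi|\xi|^{-2s}\hat\rho$, so
\[
\norm{\nabla K_s*\rho_\tau}_{L^2}=\norm{\rho_\tau}_{\dot H^{1-2s}} .
\]
The exponent $1-2s$ lies between $-s$ and $1-s$, so \eqref{eq:interp_H_dot} together with the energy bound and Proposition \ref{prop:est_auxiliary} gives a uniform $L^2(0,T;L^2)$ bound. For the cross-term I need
\[
\norm{\nabla K_q*\eta_\tau}_{L^2}=\norm{\eta_\tau}_{\dot H^{1-2q}}
\]
to be controlled by interpolation between $\dot H^{-r}$ (from the energy) and $\dot H^{1-r}$ (from the flow interchange). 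This requires $-r\le 1-2q\le 1-r$, which is exactly $r/2\le q\le (r+1)/2$. Symmetrically, $\rho$ needs $s/2< q<(s+1)/2$, and this is where the hypothesis on $q$ enters.

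A subtlety I would check is that the energy does not by itself bound each term separately, because of the cross term $\langle\rho,\eta\rangle_{-q}$. However, that cross term is nonnegative, since the Fourier transform of $K_q$ is positive. So $\norm{\rho}_{\dot H^{-s}}^2\le 2\calF$, and likewise for $\eta$, and the interpolation only uses these separate bounds. The time integrability also works out, since the interpolation exponent $\theta\le1$ and Hölder in time suffices.

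Weak compactness in $L^2((0,T)\times\R^d)$ then gives weak subsequential limits. To identify the limit, I would test against $\psi\in C_c^\infty$ and move the operator onto the test function: $\int \nabla K_q*\eta_\tau\cdot\psi=-\int \eta_\tau\,(K_q*\mathrm{div}\,\psi)$. The function $K_q*\mathrm{div}\,\psi$ is continuous and decays, so narrow convergence combined with the strong $L^2_{\mathrm{loc}}$ convergence from (i), plus tail control via the uniform moments, identifies the limit as $\nabla K_q*\eta$. Uniqueness of the limit then upgrades subsequential convergence to convergence of the whole sequence.

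I expect the main obstacle to be the compactness step in (i). The difficulty is verifying the hypotheses of the Aubin--Lions/Rossi--Savaré theorem with only homogeneous bounds on an unbounded domain, and in particular checking that the localised functional is lower semicontinuous with respect to narrow convergence.
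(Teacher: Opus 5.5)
Your proposal is correct. In (ii) your estimates match the paper's. You split into self- and cross-terms, write $\|\nabla K_q*\eta_\tau\|_{L^2}=\|\eta_\tau\|_{\dot H^{1-2q}}$, interpolate between $\dot H^{-r}$ and $\dot H^{1-r}$ with $\theta=r+1-2q$, and then use H\"older in time. The genuine difference is in (i).

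The paper uses no Aubin--Lions/Rossi--Savar\'e theorem. Theorem~\ref{thm:conv_sol} already gives $\rho_{\tau_n}(t)\to\rho(t)$ narrowly for \emph{every} $t$, so only spatial compactness at fixed time is needed. That compactness comes from the uniform energy bound $\|\rho_\tau(t)\|^2_{\dot H^{-s}}\le 2\calF(\rho_0,\eta_0)$, not from the $H^{1-s}$ bound. The steps are:
\begin{itemize}
\item for $\psi\in\calS(\R^d)$, Proposition~\ref{prop:inequalities}(ii)--(iii) make $\{\psi\rho_\tau(t)\}_\tau$ precompact in $H^{-s-\varepsilon}$, and narrow convergence identifies the limit;
\item dominated convergence in $t$ gives $\psi\rho_{\tau_n}\to\psi\rho$ in $L^2(0,T;H^{-s-\varepsilon})$;
\item interpolation against the $L^2(0,T;H^{1-s})$ bound, with H\"older in time, upgrades this to $L^2(0,T;H^k)$ for every $k<1-s$, in particular $k=0$.
\end{itemize}
This avoids exactly the difficulties you anticipated: choosing a Banach space, and lower semicontinuity of the functional and pseudo-distance on an unbounded domain. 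The time compactness was already spent in the Ascoli--Arzel\`a step, which your route effectively redoes.

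Your route is more robust, since it would still work with only integrated-in-time information. It does, however, need two checks you leave implicit.
\begin{itemize}
\item A concrete setting, e.g.\ $X=L^2(B_R)$ with a localized $H^{1-s}$ functional and a localized dual-Lipschitz pseudo-distance. That pseudo-distance is lower semicontinuous on $X$ and dominated by $W_1\le W_2$.
\item The passage from convergence in measure in $t$ to convergence in $L^2(0,T;L^2(B_R))$, which requires equi-integrability in time. It holds because $\|\rho_\tau(t)\|^2_{L^2}\le(2\calF(\rho_0,\eta_0))^{1-s}\|\rho_\tau(t)\|^{2s}_{\dot H^{1-s}}$ is bounded in $L^{1/s}(0,T)$ with $1/s>1$. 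A mere $L^1(0,T)$ bound would not suffice.
\end{itemize}

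Two small points.
\begin{itemize}
\item Your physical-space identification $\int\nabla K_q*\eta_\tau\cdot\psi\,dx=-\int\eta_\tau\,(K_q*\mathrm{div}\,\psi)\,dx$ is the dual of the paper's Fourier-side dominated convergence argument. Since $K_q*\mathrm{div}\,\psi$ is continuous and vanishes at infinity, narrow convergence at each $t$ plus dominated convergence in time already suffice. Neither (i) nor the moment bounds are needed there.
\item The cross term $\langle\rho,\eta\rangle_{\dot H^{-q}}=\iint K_q(x-y)\,d\rho(x)\,d\eta(y)$ is nonnegative because $K_q>0$ pointwise and $\rho,\eta\ge 0$. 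Positivity of $\widehat{K_q}$ alone only gives $\langle f,f\rangle_{\dot H^{-q}}\ge 0$, not nonnegativity of the pairing of two different measures.
\end{itemize}
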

\begin{proof}
    \underline{Statement (i).} The proof of this first point follows the proof in \cite[Lemma 6.1]{lis_main_seg} and we report it for completeness. We start noticing that the interpolation inequality \eqref{eq:interp_H_dot} for $\theta = s$ reads
    \[
        \norm{\rho_\tau (t)}_{L^2(\R^d)} \leq \norm{\rho_\tau (t)}^{1-s}_{\dot{H}^{-s}(\R^d)} \norm{\rho_\tau (t)}^s_{\dot{H}^{1-s}(\R^d)}.
    \]
    By Holder inequality, estimate \eqref{eq:proof_energy_1} and Proposition \ref{prop:est_auxiliary} we get
    \begin{align*}
        \int_0^T \norm{\rho_\tau (t)}^2_{L^2(\R^d)}\d t 
        & \leq \left( \int_0^T \norm{\rho_\tau (t)}^2_{\dot{H}^{-s}(\R^d)} \d t \right)^{1-s}  \left( \int_0^T \norm{\rho_\tau (t)}^2_{\dot{H}^{1-s} (\R^d)} \d t \right)^s \\
        & \leq \left(2 T \calF (\rho_0, \eta_0) \right)^{1-s} \left( \calH (\rho_0, \eta_0) + \left( 1 + T \calF (\rho_0, \eta_0) + \mathrm{m}_2\left[\rho_0\right]+\mathrm{m}_2\left[\eta_0\right]\right) \right)^s,
    \end{align*}
    and by the lower semi-continuity of the norm, we deduce
    \begin{equation}
        \label{eq:conv_H_1-s}
        \rho_\tau, \rho \in L^2((0,T); H^{1-s} (\R^d)).
    \end{equation}
    Consider $\psi \in \calS (\R^d)$. By Proposition \ref{prop:inequalities} and inequality \eqref{eq:proof_energy_1}, we estimate 
    \[
        \norm{ \psi \rho_\tau (t)}^2_{H^{-s}(\R^d)} \leq \norm{\rho_\tau (t)}^2_{H^{-s}(\R^d)} \leq \norm{\rho_\tau (t)}^2_{\dot{H}^{-s}(\R^d)} \leq 2 \calF (\rho_0, \eta_0),
    \]
    obtaining that the family $\{ \psi \rho_\tau (t) \}_\tau$ is bounded in $H^{-s}(\R^d)$. By Proposition \ref{prop:inequalities}, it follows that that $\{ \psi \rho_\tau (t) \}_\tau$ is compact in $H^{-s-\varepsilon}(\R^d)$, for $\varepsilon >0$ small. Since, $\rho_\tau (t) \to \rho (t)$ narrowly, we can find a subsequence $\tau_n$ such that $\psi \rho_{\tau_n} (t) \to \psi \rho (t)$ strongly in $H^{-s-\varepsilon}(\R^d)$ for all $t>0$.
    By Proposition \ref{prop:inequalities}, there exists a constant $c$ such that, for all $t>0$,
    \[
        \norm{\psi \rho_\tau (t) - \psi \rho(t)}^2_{H^{-s-\varepsilon}(\R^d)} \leq c \left( \norm{\rho_\tau (t)}^2_{H^{-s}(\R^d)} + \norm{\rho (t)}^2_{H^{-s}(\R^d)} \right) \leq 4 c \calF (\rho_0, \eta_0),
    \]
    i.e., the $H^{-s-\varepsilon}$-norm is dominated. By the Dominated Convergence Theorem, we deduce
    \[
        \int_0^T \norm{\psi \rho_{\tau_n} (t) - \psi \rho(t)}^2_{H^{-s-\varepsilon}(\R^d)} \d t \rightarrow 0 \quad \mbox{as $n \to \infty$.}
    \]
    By the interpolation inequality, we infer
    \begin{equation}
        \label{eq:estim_H_k}
        \begin{aligned}
            \int_0^T & \norm{\psi \rho_\tau (t)- \psi \rho (t)}^2_{H^k(\R^d)} \d t \\
            & \leq \left( \int_0^T \norm{\psi \rho_\tau (t)- \psi \rho (t)}^2_{H^{-s-\varepsilon}(\R^d)} \d t \right)^{1-\theta} \left( \int_0^T \norm{\psi \rho_\tau (t)- \psi \rho (t)}^2_{H^{1-s}(\R^d)} \d t \right)^\theta,
        \end{aligned}
    \end{equation}
    with $k=(1+\theta) (-s-\varepsilon) + \theta(1-s)$, namely $\theta = (k+s+\varepsilon) / (1+\varepsilon)$. We notice that $0 < \theta < 1$ if and only if $k < 1-s$.
    Now, since
    \[
        \int_0^T \norm{\psi \rho_\tau (t) - \psi \rho (t)}^2_{H^{1-s}(\R^d)} \d t \leq C \int_0^T \norm{\rho_\tau (t) - \rho (t)}^2_{H^{1-s}(\R)} \d t, 
    \]
    due to \eqref{eq:conv_H_1-s}, we get that the left-hand side in \eqref{eq:estim_H_k} vanishes, i.e. 
    \begin{equation}
        \label{eq:conv_H_r}
        \psi \rho_{\tau_n} \to \psi \rho \quad \mbox{strongly in $L^2 ((0,T), H^k (\R^d))$ \ as \ $n \to + \infty$, \ for $k<1-s$ \ and $\psi \in \calS (\R^d)$}.
    \end{equation}
    Let $K\subset \R^d$ and $\varphi \in C_c^\infty (\R^d; \R)$ with $\varphi = 1$ on $K$. Taking $k=0$, we deduce that
    \[
        \norm{\rho_\tau (t) - \rho (t)}^2_{L^2 (K)} \leq \norm{\varphi \rho_\tau (t) - \varphi \rho (t)}^2_{L^2 (\R^d)},
    \]
    and due to \eqref{eq:conv_H_r} we get the first convergence result. 

    \noindent \underline{Statement (ii).} We now deal with the convergence of the convolution terms. Since $\widehat{K_s \ast \rho_\tau (t)}(\xi) =\abs{\xi}^{-2s} \widehat{\rho_\tau (t)} (\xi)$, we get
    \begin{equation} 
        \label{eq:estim_vel}
        \begin{aligned}
            \norm{\nabla K_s \ast \rho_\tau (t) + \nabla K_q \ast \eta_\tau (t)}_{L^2 (\R)} & \leq \norm{\nabla K_s \ast \rho_\tau (t)}_{L^2 (\R)} + \norm{\nabla K_q \ast \eta_\tau (t)}_{L^2 (\R)} \\
            & = \norm{\rho_\tau (t)}_{\dot{H}^{1-2s} (\R^d)} + \norm{\eta_\tau (t)}_{\dot{H}^{1-2q}(\R^d)}.
        \end{aligned}
    \end{equation}
    By the interpolation
    \[
        \norm{\rho_\tau (t)}_{\dot{H}^{1-2s}(\R^d)} \leq \norm{\rho_\tau (t)}^s_{\dot{H}^{-s}(\R^d)} \norm{\rho_\tau (t)}^{1-s}_{\dot{H}^{1-s}(\R^d)},
    \]
    we write
    \begin{align*}
        \int_0^T \norm{\rho_\tau (t)}^2_{\dot{H}^{1-2s} (\R^d)} \d t & \leq \left( \int_0^T \norm{\rho_\tau (t)}^2_{\dot{H}^{s} (\R^d)} \d t \right)^s \left( \int_0^T \norm{\rho_\tau (t)}^2_{\dot{H}^{1-s} (\R^d)} \d t \right)^{1-s} \\
        & \leq \left( 2 T \calF(\rho_0, \eta_0) \right)^s \left( \calH (\rho_0, \eta_0) + c \left( 1 + T \calF (\rho_0, \eta_0) + \mathrm{m}_2\left[\rho_0\right]+\mathrm{m}_2\left[\eta_0\right]\right) \right)^{1-s},
    \end{align*}
    thus the first term in the right hand side of \eqref{eq:estim_vel} is bounded. 
    In order to estimate the second one, we need to use the assumption, and thus distinguish the cases $s<r$ and $r<s$. If $r<s$ we deduce $-r < 1-2q < 1-s < 1-r$.
    By interpolation we obtain
    \begin{equation}
        \label{eq:est_cross_H_1-2q}
        \norm{\eta_\tau (t)}_{\dot{H}^{1-2q}(\R^d)} \leq \norm{\eta_\tau (t)}^{1-\theta}_{\dot{H}^{-r}(\R^d)} \norm{\eta_\tau (t)}^\theta_{\dot{H}^{1-r} (\R^d)},
    \end{equation}
    with $\theta = r+1-2q$. Notice that $0 < \theta < 1$. Applying the Holder's inequality with powers $\frac{1}{1-\theta}$ and $\frac{1}{\theta}$, we arrive at
    \begin{align*}
        \int_0^T \norm{\eta_\tau (t)}^2_{\dot{H}^{1-2q}(\R^d)}\d t & \leq  \left( \int_0^T \norm{\eta_\tau (t)}^2_{\dot{H}^{-r}(\R^d)}\d t \right)^{1-\theta} \left( \int_0^T \norm{\eta_\tau (t)}^2_{\dot{H}^{1-r}(\R^d)}\d t \right)^\theta \\
        & \leq \left( 2 T \calF (\rho_0, \eta_0) \right)^{1-\theta} \left( \calH (\rho_0, \eta_0) + c \left( 1 + T \calF (\rho_0, \eta_0) + \mathrm{m}_2\left[\rho_0\right]+\mathrm{m}_2\left[\eta_0\right]\right) \right)^\theta.
    \end{align*}
    It follows that $\nabla K_s \ast \rho_\tau (t) + \nabla K_q \ast \eta_\tau (t)$ is weakly compact in $L^2 ((0,T); L^2 (\R^d))$. The case $s<r$ is similar. In order to identify the limit, we take $\varphi \in C_c^\infty (\R^d)$. By the Plancherel formula
    \[
        (2\pi)^d \intd \left( \nabla K_s \ast \rho_{\tau_n} + \nabla K_q \ast \eta_{\tau_n} \right) \varphi \d x = - \intd \left( \abs{\xi}^{-2s} \hat{\rho}_{\tau_n} (-\xi) + \abs{\xi}^{-2q} \hat{\eta}_{\tau_n} (-\xi) \right) \xi \cdot \hat{\varphi} (\xi) \d \xi.
    \]
    Integrating in time as $t \in (0,T)$, we deduce
    \begin{align*}
        (2\pi)^d \int_0^T & \intd \left( \nabla K_s \ast \rho_{\tau_n} + \nabla K_q \ast \eta_{\tau_n} \right) \varphi \d x \d t  \\
        & = - i \int_0^T \intd \left( \abs{\xi}^{-2s} \widehat{\rho_{\tau_n} (t)} (-\xi) + \abs{\xi}^{-2q} \widehat{\eta_{\tau_n} (t)} (-\xi) \right) \xi \cdot \hat{\varphi} (t, \xi) \d \xi \d t.
    \end{align*}
    By Dominated Convergence Theorem, since the integrand in the right-hand side is bounded, namely
    \[
        \left\lvert \left( \abs{\xi}^{-2s} \widehat{\rho_{\tau_n} (t)} (-\xi) + \abs{\xi}^{-2q} \widehat{\eta_{\tau_n} (t)} (-\xi) \right) \xi \cdot \hat{\varphi} (t, \xi) \right\rvert \leq \left\lvert \abs{\xi}^{1-2s} + \abs{\xi}^{1-2q} \right\rvert \abs{\hat{\varphi} (\xi) }
    \]
    due to $\rho_{\tau_n}, \eta_{\tau_n} \in \calP(\R^d)$ and $\hat{\varphi} \in \calS (\R^d)$, by the narrow convergence of $(\rho_{\tau_n}, \eta_{\tau_n})$ stated in Theorem \ref{thm:conv_sol}, said term converges to
    \[
        - i \int_0^T \intd \left( \abs{\xi}^{-2s} \widehat{\rho (t)} (-\xi) + \abs{\xi}^{-2q} \widehat{\eta (t)} (-\xi) \right) \xi \cdot \hat{\varphi} (t, \xi) \d \xi \d t,
    \]
    i.e.,
    \[
        \int_0^T \intd \left( \nabla K_s \ast \rho_{\tau_n} + \nabla K_q \ast \eta_{\tau_n} \right) \varphi \d x \d t \rightarrow \int_0^T  \intd \left( \nabla K_s \ast \rho + \nabla K_q \ast \eta \right) \varphi \d x \d t.
    \]
    This proves the weak convergence of the convolutions terms, as stated.
\end{proof}

\begin{lemma}
    \label{lemma:trasport}
    Let $(\rho_0, \eta_0) \in X_2^{s,q}\times X_2^{r,q}$, and let $\{(\rho_\tau^n, \eta_\tau^n)\}_{n \in \N}$ be the minimising sequence constructed in Theorem \ref{thm:conv_sol}. Denote by $T_\rho$ and $T_\eta$ the optimal transportation maps between $\rho_\tau^{n+1}$ and $\rho_\tau^n$ and between $\eta_\tau^{n+1}$ and $\eta_\tau^n$ respectively. If $q$ satisfies
    \[
        \max \left\{ \frac{s}{2}, \frac{r}{2} \right\} \leq q \leq \min \left\{ \frac{s+1}{2}, \frac{r+1}{2} \right\},
    \]
    then
    \begin{align*}
        \intd \nabla \left( K_s \ast \rho_\tau^{n+1}(x) + K_q \ast \eta_\tau^{n+1}(x) \right) \cdot \varphi (x) \rho_\tau^{n+1} (x) \d x =  \frac{1}{\tau} \intd ( T_\rho - \mathrm{id}) \cdot \varphi \rho_\tau^{n+1} \d x, \\
        \intd \nabla \left( K_r \ast \eta_\tau^{n+1}(x) + K_q \ast \rho_\tau^{n+1}(x) \right) \cdot \chi (x) \eta_\tau^{n+1} (x) \d x =  \frac{1}{\tau} \intd ( T_\eta - \mathrm{id}) \cdot \chi \eta_\tau^{n+1} \d x,
    \end{align*}
    for all $\varphi, \chi \in C_c ^\infty (\R^d \times \R^d)$, where $\mathrm{id}$ denotes the identity map. Furthermore, 
    \begin{align*}
        \intd \abs{\nabla \left( K_s \ast \rho_\tau^{n+1}(x) + K_q \ast \eta_\tau^{n+1}(x) \right)}^2 \rho_\tau^{n+1} \d x = \frac{1}{\tau^2} W_2^2 (\rho_\tau^{n+1}, \rho_\tau^n), \\
        \intd \abs{\nabla \left( K_r \ast \eta_\tau^{n+1}(x) + K_q \ast \rho_\tau^{n+1}(x) \right)}^2 \eta_\tau^{n+1} \d x = \frac{1}{\tau^2} W_2^2 (\eta_\tau^{n+1}, \eta_\tau^n).
    \end{align*}
\end{lemma}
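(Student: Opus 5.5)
The plan is the standard Euler--Lagrange argument for the JKO step \eqref{eq:jko_scheme}, perturbing one species at a time by a smooth vector field. Fix $\varphi\in C_c^\infty(\R^d;\R^d)$ and, for $\varepsilon$ small, set $\Phi_\varepsilon=\mathrm{id}+\varepsilon\varphi$, which is a diffeomorphism. Define the competitor $(\rho_\varepsilon,\eta_\tau^{n+1})$ with $\rho_\varepsilon=(\Phi_\varepsilon)_\#\rho_\tau^{n+1}$. Minimality gives
\[
0\le \frac{1}{2\tau}\bigl(W_2^2(\rho_\varepsilon,\rho_\tau^n)-W_2^2(\rho_\tau^{n+1},\rho_\tau^n)\bigr)+\calF(\rho_\varepsilon,\eta_\tau^{n+1})-\calF(\rho_\tau^{n+1},\eta_\tau^{n+1}).
\]
For the Wasserstein part I use the plan $(\Phi_\varepsilon\circ T_\rho^{-1}\text{-type})$ built from $(\mathrm{id},T_\rho)_\#\rho_\tau^{n+1}$. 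This gives $\limsup_{\varepsilon\downarrow0}\varepsilon^{-1}[\dots]\le -\frac1\tau\int (T_\rho-\mathrm{id})\cdot\varphi\,\rho_\tau^{n+1}$. Because $\rho_\tau^{n+1}\in L^2$ is absolutely continuous, Brenier's map exists.

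For the energy part, the $K_q$ cross term and the $K_s$ self term change as
\[
\calF(\rho_\varepsilon,\eta)-\calF(\rho,\eta)=\tfrac12\iint\bigl(K_s(x+\varepsilon\varphi(x)-y-\varepsilon\varphi(y))-K_s(x-y)\bigr)\d\rho\d\rho+\iint\bigl(K_q(x+\varepsilon\varphi(x)-y)-K_q(x-y)\bigr)\d\rho\d\eta .
\]
Dividing by $\varepsilon$ and letting $\varepsilon\to0$ formally gives
\[
\tfrac12\iint\nabla K_s(x-y)\cdot(\varphi(x)-\varphi(y))\d\rho\d\rho+\iint\nabla K_q(x-y)\cdot\varphi(x)\d\rho\d\eta=\int(\nabla K_s\ast\rho+\nabla K_q\ast\eta)\cdot\varphi\,\rho .
\]
Replacing $\varphi$ by $-\varphi$ turns the inequality into the first identity. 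The $\chi$ identity follows in the same way by perturbing $\eta$.

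The main obstacle is justifying the passage to the limit in the singular kernels. For the self term, $|\nabla K_s(z)\cdot(\varphi(x)-\varphi(y))|\le C|z|^{-d+2s}$, and the difference quotients are dominated via the mean value theorem since $|\Phi_\varepsilon(x)-\Phi_\varepsilon(y)|\simeq|x-y|$. The bound $\iint K_s\d\rho\d\rho<\infty$ (finite energy) then gives dominated convergence. The cross term has no symmetrization, and its integrand is $|x-y|^{-d+2q-1}$, so I will not use domination there. Instead I will work in Fourier variables, as in Lemma~\ref{lemma:convergence}(ii), and show that the limit is the pairing $\int\nabla K_q\ast\eta\cdot\varphi\,\rho$. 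For this I need $\nabla K_q\ast\eta\in L^2$, i.e. $\eta\in\dot H^{1-2q}$, and $\rho\in L^2$. By the interpolation \eqref{eq:est_cross_H_1-2q} the first requirement holds exactly when $-r\le 1-2q\le 1-r$, which is where the hypothesis on $q$ enters. Both regularity facts are available because $\rho_\tau^{n+1},\eta_\tau^{n+1}$ lie in $\dot H^{1-s}\times\dot H^{1-r}$ by Proposition~\ref{prop:est_auxiliary}, and hence also in $L^2$. Concretely, I will write the difference quotient of the cross term as $\int \frac{K_q\ast\eta(x+\varepsilon\varphi)-K_q\ast\eta(x)}{\varepsilon}\rho\d x$. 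Because $K_q\ast\eta\in\dot H^{2-2q}_{\rm loc}$ and $\rho\in L^2$, the quotient converges to $\nabla(K_q\ast\eta)\cdot\varphi$ in $L^2_{\rm loc}$, which yields the limit.

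For the second pair of identities I use density. The identity extends from $C_c^\infty$ fields to all $\varphi\in L^2(\rho_\tau^{n+1};\R^d)$, provided the velocity $v=\nabla(K_s\ast\rho+K_q\ast\eta)$ lies in $L^2(\rho_\tau^{n+1})$. This holds since $v\in L^2$ by \eqref{eq:estim_vel}, $\rho\in L^\infty$ locally is not needed, and instead $v\in L^4$ follows by Sobolev if necessary; alternatively the first identity itself shows $v=(T_\rho-\mathrm{id})/\tau$ $\rho$-a.e. From $v=(T_\rho-\mathrm{id})/\tau$ we get $\int|v|^2\rho=\tau^{-2}\int|T_\rho-\mathrm{id}|^2\rho=\tau^{-2}W_2^2(\rho_\tau^{n+1},\rho_\tau^n)$, by optimality of $T_\rho$. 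The $\eta$ identity is obtained symmetrically.
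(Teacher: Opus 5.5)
Your proposal is correct. It follows the same Euler--Lagrange architecture as the paper but justifies the energy derivative by a different, physical-space route.

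\textbf{Common structure.} Both proofs perturb only $\rho_\tau^{n+1}$ by $(\mathrm{id}+\delta\varphi)_\#$. Both bound the Wasserstein increment using the plan built on $T_\rho$. Both conclude $\nabla(K_s\ast\rho_\tau^{n+1}+K_q\ast\eta_\tau^{n+1})=(T_\rho-\mathrm{id})/\tau$ $\rho_\tau^{n+1}$-a.e.\ via your ``alternatively'' route. Drop the first route through $v\in L^4$: it is not needed, and Sobolev does not give it in general dimension.

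\textbf{How the paper computes the energy derivative.} It works entirely on the Fourier side, following \cite{lis_main_seg}. It writes the self-interaction increment as $\int|\xi|^{1-2s}(\hat\rho_\delta+\hat\rho)(-\xi)\,|\xi|^{-1}(\hat\rho_\delta-\hat\rho)(\xi)\d\xi$. It then proves strong $L^2$ convergence of $|\xi|^{1-2s}\hat\rho_\delta$, which needs $H^{1-s}$ composition estimates for $\rho\circ B_\delta^{-1}$, Rellich compactness and interpolation. It pairs this with weak $L^2$ convergence of $|\xi|^{-1}\delta^{-1}(\hat\rho_\delta-\hat\rho)$. The cross term uses the same weak convergence against $|\xi|^{1-2q}\hat\eta_\tau^{n+1}\in L^2$.

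\textbf{How you compute it.} You stay in physical space.
\begin{itemize}
\item For the self term you use dominated convergence on the symmetrised kernel, which needs only finite energy.
\item For the cross term you use $L^2$ convergence of $\varepsilon^{-1}(u\circ\Phi_\varepsilon-u)\to\nabla u\cdot\varphi$ for $u=K_q\ast\eta_\tau^{n+1}\in H^1_{\mathrm{loc}}$, tested against $\rho_\tau^{n+1}\in L^2$. This is exactly dual to the paper's pairing and rests on the same two facts: $\eta_\tau^{n+1}\in\dot H^{1-2q}$ (where the hypothesis on $q$ enters) and $\rho_\tau^{n+1}\in L^2$. It is more elementary than the Fourier argument.
\item A small slip: what you need is $\nabla(K_q\ast\eta)\in L^2$, i.e.\ $K_q\ast\eta\in H^1_{\mathrm{loc}}$, not $\dot H^{2-2q}_{\mathrm{loc}}$.
\end{itemize}

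\textbf{One unjustified step.} The last equality in your self-term display is asserted, not proved. Dominated convergence gives the limit as $A=\frac12\iint\nabla K_s(x-y)\cdot(\varphi(x)-\varphi(y))\d\rho\d\rho$. For $s\le\frac12$, however, $|\nabla K_s(z)|\sim|z|^{-d-(1-2s)}\notin L^1_{\mathrm{loc}}$. So $\nabla K_s\ast\rho$ is not an absolutely convergent integral, and Fubini cannot be used to un-symmetrise.

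You can close this with your own tools. Set $u_s=K_s\ast\rho_\tau^{n+1}$. Then
\[
\tfrac12\norm{\rho_\varepsilon}^2_{\dot H^{-s}(\R^d)}-\tfrac12\norm{\rho_\tau^{n+1}}^2_{\dot H^{-s}(\R^d)}=\intd(u_s\circ\Phi_\varepsilon-u_s)\,\rho_\tau^{n+1}\d x+\tfrac12\norm{\rho_\varepsilon-\rho_\tau^{n+1}}^2_{\dot H^{-s}(\R^d)} .
\]
\begin{itemize}
\item Divided by $\varepsilon$, the first term on the right tends to $V=\intd\nabla u_s\cdot\varphi\,\rho_\tau^{n+1}\d x$ by your cross-term argument. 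This works because $\rho_\tau^{n+1}\in\dot H^{1-2s}$, by interpolation between $\dot H^{-s}$ and $\dot H^{1-s}$.
\item The left side divided by $\varepsilon$ tends to $A$, by your dominated convergence.
\item The remainder is non-negative, which gives $A\ge V$.
\item Repeating with $-\varphi$ gives $-A\ge -V$, hence $A=V$.
\end{itemize}
With this addition your argument is complete. It avoids the Sobolev composition and compactness machinery the paper imports for the strong $\dot H^{1-2s}$ convergence of $\rho_\delta$.
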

\begin{proof}
Let $\varphi \in C_c^\infty (\R^d;\R^d)$, and $\delta > 0$. We set $B_\delta (x) \coloneqq x + \delta \varphi (x)$. We consider two consecutive items in the JKO scheme \eqref{eq:jko_scheme}, namely $(\rho_\tau^n, \eta_\tau^n)$ and $(\rho_\tau^{n+1}, \eta_\tau^{n+1})$, we perturb the first component of the latter by considering the pair $((B_\delta)_\# \rho_\tau^{n+1}, \eta_\tau^{n+1})$. To lighten the notation, we set $\rho_\delta \coloneqq (B_\delta)_\# \rho_\tau^{n+1}$, then it holds
\begin{equation}
    \label{eq:push_forw}
    \intd \psi (y) \rho_\delta (y)\d y = \intd \psi (B_\delta (x)) \rho_\tau^{n+1}(x)\d x,
\end{equation}
for all measurable function $\psi$. Due to \eqref{eq:jko_scheme}, we write
\begin{equation}
    \label{eq:comp_1}
    \frac{1}{2\tau} \left[ \calW_2^2 ((\rho_\delta, \eta_\tau^{n+1}), (\rho_\tau^n, \eta_\tau^n)) - \calW_2^2  ((\rho_\tau^{n+1}, \eta_\tau^{n+1}), (\rho_\tau^n, \eta_\tau^n)) \right]  + \calF(\rho_\delta, \eta_\tau^{n+1}) - \calF(\rho_\tau^{n+1}, \eta_\tau^{n+1}) \geq 0.
\end{equation}

We start by estimating  the Wasserstein distance term in \eqref{eq:comp_1}. We have that
\[
    \calW_2^2 ((\rho_\delta, \eta_\tau^{n+1}), (\rho_\tau^n, \eta_\tau^n)) - \calW_2^2  ((\rho_\tau^{n+1}, \eta_\tau^{n+1}), (\rho_\tau^n, \eta_\tau^n)) = W_2^2 (\rho_\delta, \rho_\tau^n) - W_2^2 (\rho_\tau^{n+1}, \rho_\tau^n).
\]
Denoting by $T_\rho$ the optimal transport map between $\rho_\tau^{n+1}$ and $\rho_\tau^n$, we know that
\begin{equation}
    \label{eq:opt_W_2}
    W_2^2(\rho_\tau^n, \rho_\tau^{n+1}) = \intd \abs{x-T_\rho(x)}^2 \rho_\tau^{n+1}
    (x)\d x.
\end{equation}
In order to estimate the $W_2$-distance between $\rho_\delta$ and $\rho_\tau^n$, we notice that $T_\rho\circ (B_\delta)^{-1}$ is a (not necessarily optimal) transport map between $\rho_\delta$ and $\rho_\tau^n$, and we write
\[
    W_2^2 (\rho_\delta, \rho_\tau^n) \leq \intd \abs{x- T_\rho(B_\delta)^{-1}(x)}^2 \rho_\delta (x) \d x.
\]
Now, by using \eqref{eq:push_forw} and considering the expansion up to the first order in $\delta$, we compute
\begin{align*}
    W_2^2 (\rho_\delta, \rho_\tau^n) & \leq \intd \abs{B(x)-T_\rho(x)}^2 \rho_\tau^{n+1}(x) \d x \\
    & = \intd \abs{x + \delta \varphi(x) - T_\rho(x)}^2 \rho_\tau^{n+1}(x)\d x \\ 
    & = \intd \abs{x-T_\rho(x)}^2  \eta_\tau^{n+1}(x)\d x - 2 \delta \intd (x-T_\rho(x)) \varphi (x) \rho_\tau^{n+1}(x)\d x + o(\delta).
\end{align*}
Therefore
\[
    W_2^2 (\rho_\delta, \rho_\tau^n) - W_2^2 (\rho_\tau^{n+1}, \rho_\tau^n) = - 2 \delta \intd (x-T_\rho(x)) \varphi (x) \rho_\tau^{n+1}(x)\d x + o(\delta).
\]
Dividing by $\delta$ and performing the same computation with $-\delta$ in place of $\delta$, we end up with
\[
    \frac{1}{2\tau} \left[ \calW_2^2 ((\rho_\delta, \eta_\tau^{n+1}), (\rho_\tau^n, \eta_\tau^n)) - \calW_2^2  ((\rho_\tau^{n+1}, \eta_\tau^{n+1}), (\rho_\tau^n, \eta_\tau^n)) \right] = -\frac{1}{\tau} \intd (\mathrm{id} - T_\rho) \cdot \varphi \rho_\tau^{n+1} \d x.
\]
We start considering the terms involving the functional difference in \eqref{eq:comp_1}.
We notice that
\[ 
    \calF(\rho_\delta, \eta_\tau^{n+1}) - \calF(\rho_\tau^n, \eta_\tau^n) = \frac{1}{2} \left( \norm{\rho_\delta}^2_{\dot{H}^{-s}} - \norm{\rho_\tau^{n+1}}^2_{\dot{H}^{-s}} \right) + \frac{1}{\delta} \left( \langle \rho_\delta, \eta_\tau^{n+1} \rangle_{\dot{H}^{-q}(\R^d)} - \langle \rho_\tau^{n+1}, \eta_\tau^{n+1} \rangle_{\dot{H}^{-q}(\R^d)} \right).
\]
In order to estimate the self-interaction term, involving the $\dot{H}^{-s}$-norms, we follow \cite[Lemma 5.1]{lis_main_seg}. Recall that for all $a, b \in \mathbb{C}$, it holds $\abs{a}^2 - \abs{b}^2 = (\bar{a} + \bar{b})(a-b)+\bar{a}b-\bar{b}a$, and since 
\[
    \intd \abs{\xi}^{-2s} \hat{\rho}_\delta (-\xi) \hat{\rho}_\tau^{n+1} (\xi) \d \xi = \intd \abs{\xi}^{-2s} \hat{\rho}_\delta (\xi) \hat{\rho}_\tau^{n+1} (-\xi) \d \xi,
\]
we have
\[
    \frac{1}{2} \left( \norm{\rho_\delta}^2_{\dot{H}^{-s}} - \norm{\rho_\tau^{n+1}}^2_{\dot{H}^{-s}} \right) = \frac{1}{2} \frac{1}{(2 \pi)^d} \intd \abs{\xi}^{-2s} \left( \hat{\rho}_\delta (-\xi) + \hat{\rho}_\tau^{n+1} (-\xi) \right) \left( (\hat{\rho}_\delta (\xi) - \hat{\rho}_\tau^{n+1} (\xi) \right) \d \xi,
\]
and thus
\begin{equation}
    \label{eq:lim_self_term}
    (2\pi)^d \left( \norm{\rho_\delta}^2_{\dot{H}^{-s}} - \norm{\rho_\tau^{n+1}}^2_{\dot{H}^{-s}} \right) = \intd \abs{\xi}^{1-2s} \left( \hat{\rho}_\delta (-\xi) + \hat{\rho}_\tau^{n+1} (-\xi) \right) \abs{\xi}^{-1} \left( \hat{\rho}_\delta (\xi) - \hat{\rho}_\tau^{n+1} (\xi) \right) \d \xi.
\end{equation}
The first step in this direction is to prove that $\abs{\xi}^{1-2s} \hat{\rho}_\delta (-\xi)$ converges to $\abs{\xi}^{1-2s} \hat{\rho}_\tau^{n+1}(-\xi)$ strongly in $L^2 (\R^d)$ as $\delta \to 0$. In oreder to do it, we notice that $B_\delta$ is a global diffeomorphism, and in particular there is a $\delta_1 >0$ such that
\[
    \frac{1}{2} \leq \det (\nabla B_\delta (x)) \leq \frac{3}{2}, \qquad \mbox{for all $x \in \R^d$,} \quad \mbox{for $\delta \in [0, \delta_1]$},
\]
due to the fact that $\varphi \in C_c^\infty (\R^d ; \R^d)$. We have that also $B_\delta^{-1}$ is a global diffeomorphism, and $\abs{B_\delta (x) - B_\delta (y)} \geq c \abs{x-y}$ for all $\delta \in [0, \delta_1]$ and $x,y \in \R^d$, for some $c >0$. Now, since
\[
    \rho_\delta = h_\delta \rho_\tau^{n+1} \circ B_\delta ^{-1} + \rho_\tau^{n+1} \circ B_\delta^{-1},
\]
with $h_\delta = \det \nabla B_\delta^{-1} - 1$, due to \cite[Corollary 1.60, Theorem 1.62]{bahouri}, we have
\[
    \norm{\rho_\tau^{n+1} \circ B_\delta^{-1}}_{H^{1-s}(\R^d)} \leq c \norm{\rho_\tau^{n+1}}_{H^{1-s}(\R^d)}, \quad \mbox{\and} \quad \norm{h_\delta (\rho_\tau^{n+1} \circ B_\delta^{-1})}_{H^{1-s}(\R^d)} \leq c \norm{\rho_\tau^{n+1}}_{H^{1-s}(\R^d)},
\]
for all $\delta \in [0, \delta_1]$. Combining the two results above, we have
\[
    \norm{\rho_\delta - \rho_\tau^{n+1}}_{H^{1-s} (\R^d)} \leq c + \norm{\rho_\tau^{n+1}}_{H^{1-s}(\R^d)},
\]
for all $\delta \in [0, \delta_1]$. Since the support of $\rho_\delta - \rho_\tau^{n+1}$ is compact, i.e, $\mathrm{supp} (\rho_\delta - \rho_\tau^{n+1})= \mathrm{supp} \varphi$, and $\rho_\delta$ converges to $\rho_\tau^{n+1}$ narrowly when $\delta$ vanishes, then by Rellich-Kondrachov Theorem and the narrow convergence in Theorem \ref{thm:conv_sol}, we deduce
\[
    \norm{\rho_\delta - \rho_\tau^{n+1}}_{H^k(\R^d)}\to 0 \quad \mbox{strongly as $\delta \to 0$ for all $k < 1-s$.}
\]
By \eqref{eq:interp_H_dot} it holds that
\begin{align*}
    \norm{\abs{\xi}^{1-2s} \left(\hat{\rho}_\delta - \hat{\rho}_\tau^{n+1}\right)}_{L^2(\R^d)} & = \norm{\hat{\rho}_\delta - \hat{\rho}_\tau^{n+1}}_{\dot{H}^{1-2s} (\R^d)} \\
    & \leq \norm{\hat{\rho}_\delta - \hat{\rho}_\tau^{n+1}}^{1-\theta}_{\dot{H}^{-s} (\R^d)} \norm{\hat{\rho}_\delta - \hat{\rho}_\tau^{n+1}}^\theta_{\dot{H}^k (\R^d)},
\end{align*}
with $1-2s = (1-\theta) (-s) + \theta k$.  For $k \in (\max\{ 1-2s, 0 \},1-s)$, we obtain
\begin{equation}
    \label{eq:strong_self}
    \abs{\xi}^{1-2s} \hat{\rho}_\delta (-\xi) \rightarrow \abs{\xi}^{1-2s} \hat{\rho}_\tau^{n+1}(-\xi) \quad \mbox{strongly in $L^2 (\R^d)$ as $\delta\to 0$.}
\end{equation}
We now show that $\abs{\xi}^{-1} \frac{1}{\delta} (\hat{\rho}_\delta (\xi) - \hat{\rho}_\tau^{n+1}(\xi))$ converges to $-i \abs{\xi}^{-1} \xi \cdot (\widehat{\varphi \rho}_\tau^{n+1}) (\xi)$ weakly in $L^2(\R^d)$.
Defining the function $g_\xi (\delta) : [0, +\infty) \to \R$ as $g_\xi (\delta) \coloneqq \hat{\rho}_\delta (\xi)$, i.e.,
\[
    g_\xi (\delta)=\hat{\rho}_\delta (\xi)= \intd e^{-i \xi \cdot (x+\delta \varphi (x)} \varphi (x) \rho_\tau^{n+1}(x)\d x.
\]
we have that $g_\xi$ is $C^1$ in $\delta$ and
\[
    g'_\xi (\delta) = - i\xi \cdot \intd e^{-i \xi \cdot (x+\delta \varphi(x)  )} \varphi (x) \rho_\tau^{n+1}(x)\d x,
\]
where we have used the Dominated Convergence Theorem.
By the mean value Theorem, for all $\xi$ and $\delta>0$ there exists a $\delta_\xi \in [0, \delta)$ such that
\[
    \frac{1}{\delta} ( \hat{\rho}_\delta (\xi) - \hat{\rho}_\tau^{n+1} (\xi) )= g'_\xi (\delta_\xi) = - i \xi \cdot \widehat{(B_{\delta_\xi}) _\# (\varphi \rho_\tau^{n+1}}) (\xi),
\]
by using the definition of $B_\delta$. Since $g'_\xi (\delta_\xi)$ is dominated, namely $\abs{ g'_\xi (\delta_\xi)} \leq \abs{\xi} \norm{\varphi}_{L^\infty(\R^d)} \norm{\rho_\tau^{n+1}}_{L^1(\R^d)}$, then
\[
    \abs{\xi}^{-1} \frac{1}{\delta} \left( \hat{\rho}_\delta (\xi) - \hat{\rho}_\tau^{n+1} (\xi) \right) \rightarrow -i \abs{\xi}^{-1} \xi \cdot (\widehat{\varphi \rho}_\tau^{n+1}) (\xi)
\]
in the sense of distributions. Moreover, it holds that $\norm{(B_\delta)_\# (\varphi \rho_\tau^{n+1})}_{L^2(\R^d)} \leq \norm{\varphi \rho_\tau^{n+1}}_{L^2(\R^d)}$, therefore, $ \abs{\xi}^{-1} \frac{1}{\delta} ( \hat{\rho}_\delta (\xi) - \hat{\rho}_\tau^{n+1} (\xi) )$ is uniformly bounded in $L^2 (\R^d)$. As a consequence,
\begin{equation}
    \label{eq:weak_self}
    \abs{\xi}^{-1} \frac{1}{\delta} (\hat{\rho}_\delta (\xi) - \hat{\rho}_\tau^{n+1}(\xi) \rightharpoonup -i \abs{\xi}^{-1} \xi \cdot (\widehat{\varphi \rho}_\tau^{n+1}) (\xi) \quad \mbox{weakly in $L^2(\R^d)$.}
\end{equation}
Dividing by $\delta$ in \eqref{eq:lim_self_term} and passing to the limit as $\delta\to0$, using the pairing of strong convergence and weak convergence in \eqref{eq:strong_self} and \eqref{eq:weak_self}, we compute
\begin{align*}
    (2\pi)^d \lim_{\delta \to 0} \frac{1}{\delta} \left( \norm{\rho_\delta}^2_{\dot{H}^{-s}} - \norm{\rho_\tau^{n+1}}^2_{\dot{H}^{-s}} \right) 
   & = - i \intd \abs{\xi}^{-2s} \hat{\rho}_\tau^{n+1} (-\xi) \xi \cdot (\widehat{\varphi \rho}_\tau^{n+1}) (\xi) \d \xi \\
    & = -i \sum_{j=1}^d \intd \abs{\xi}^{-2s} \rho_\tau^{n+1}(-\xi) \xi_j \cdot (\widehat{\varphi_j \rho}_\tau^{n+1}) (\xi) \d \xi \\
    & = (2 \pi)^d \sum_{j=1}^d \intd \de_{x_j} \left( \abs{x}^{-2s} \rho_\tau^{n+1}(x) \right) \varphi_j (x) \rho_\tau^{n+1} (x) \d x \\
    & = (2 \pi)^d \intd \nabla \left( \abs{x}^{-2s} \rho_\tau^{n+1}(x) \right) \cdot \varphi (x) \rho_\tau^{n+1} (x) \d x \\
    & = (2 \pi)^d \intd \nabla \left( K_s \ast \rho_\tau^{n+1}(x) \right) \cdot \varphi (x) \rho_\tau^{n+1} (x) \d x,
\end{align*}
where we used the Plancherel Theorem and the relation $\widehat{K_s \ast \rho}_\tau^{n+1} (\xi) =\abs{\xi}^{-2s} \hat{\rho}_\tau^{n+1} (\xi)$.

We are now left to estimate the cross-interaction term in \eqref{eq:comp_1} involving the inner product in $\dot{H}^{-q}$.  We have that
\[
    \left\langle \rho_\delta - \rho_\tau^{n+1}, \eta_\tau^{n+1} \right\rangle_{\dot{H}^{-q}(\R^d)}  = \intd \abs{\xi}^{1-2q} \hat{\eta}_\tau^{n+1}(-\xi)\abs{\xi}^{-1} \left( \hat{\rho}_\delta (\xi) - \hat{\rho}_\tau^{n+1} (\xi) \right) \d \xi.
\]
By noticing that
\[
    \norm{\abs{\xi}^{1-2q} \hat{\eta}_\tau^{n+1}}_{L^2(\R^d)}=\norm{ \eta_\tau^{n+1}}_{\dot{H}^{1-2q} (\R^d)},
\]
using the same argument of the proof of Lemma \ref{lemma:convergence} and estimate \eqref{eq:est_cross_H_1-2q}, the assumption on $q$ ensures that $\abs{\xi}^{1-2q} \hat{\eta}_\tau^{n+1} \in L^2 (\R^d)$. Due to \eqref{eq:weak_self}, performing a similar computation as above, we get
\begin{align*}
    (2\pi)^d \lim_{\delta \to 0} \frac{1}{\delta} \left\langle \rho_\delta - \rho_\tau^{n+1}, \eta_\tau^{n+1} \right\rangle_{\dot{H}^{-q}(\R^d)} 
    & = - i \intd \abs{\xi}^{-2q} \hat{\eta}_\tau^{n+1} (-\xi) \xi \cdot (\widehat{\varphi \rho}_\tau^{n+1}) (\xi) \d \xi \\
    & = (2 \pi)^d \sum_{j=1}^d \intd \de_{x_j} \left( \abs{x}^{-2q} \eta_\tau^{n+1}(x) \right) \varphi_j (x) \rho_\tau^{n+1} (x) \d x \\
    & = (2 \pi)^d \intd \nabla \left( K_q \ast \eta_\tau^{n+1}(x) \right) \cdot \varphi (x) \rho_\tau^{n+1} (x) \d x.
\end{align*}
Combining together all the estimates, we conclude that
\begin{equation}
    \label{eq:est_for_sol}
    \intd \nabla \left( K_s \ast \rho_\tau^{n+1}(x) + K_q \ast \eta_\tau^{n+1}(x) \right) \cdot \varphi (x) \rho_\tau^{n+1} (x) \d x =  -\frac{1}{\tau} \intd (\mathrm{id} - T_\rho) \cdot \varphi \rho_\tau^{n+1} \d x,
\end{equation}
for all $\varphi \in C_c^\infty(\R^d ; \R^d)$. A consequence is that the two integrand functions are equal almost everywhere, namely
\[
    \tau \rho_\tau^{n+1} \nabla \left( K_s \ast \rho_\tau^{n+1}(x) + K_q \ast \eta_\tau^{n+1}(x) \right) = (T_\rho-\mathrm{id}) \rho_\tau^{n+1}.
\]
Combining this with \eqref{eq:opt_W_2}, the proof is complete.
\end{proof}
With a similar argument we can prove the following result.
\begin{cor}\label{cor:degiorgiid}
    Under the assumptions of Lemma \ref{lemma:trasport}, for any $t\in(n\tau,(n+1)\tau]$ we have that
        \begin{align*}
        \intd \abs{\nabla \left( K_s \ast \tilde{\rho}_\tau(t,x) + K_q \ast \tilde{\eta}_\tau(t,x) \right)}^2 \tilde{\rho}_\tau(t,x) \d x = \frac{1}{\left(t-n\tau\right)^2} W_2^2 (\tilde{\rho}_\tau(t), \rho_\tau^n), \\
            \intd \abs{\nabla \left( K_r \ast \tilde{\eta}_\tau(t,x) + K_q \ast \tilde{\rho}_\tau(t,x) \right)}^2 \tilde{\eta}_\tau(t,x) \d x = \frac{1}{\left(t-n\tau\right)^2} W_2^2 (\tilde{\eta}_\tau(t), \eta_\tau^n),
    \end{align*}
\end{cor}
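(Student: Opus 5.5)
The plan is to repeat the proof of Lemma \ref{lemma:trasport} with the De Giorgi variational interpolation \eqref{e:degiorgi} in place of the JKO step. Fix $t\in(n\tau,(n+1)\tau]$ and set $h\coloneqq t-n\tau\in(0,\tau]$. The pair $(\tilde{\rho}_\tau(t),\tilde{\eta}_\tau(t))$ minimises
\[
(\rho,\eta)\mapsto \frac{1}{2h}\calW_2^2((\rho_\tau^n,\eta_\tau^n),(\rho,\eta))+\calF(\rho,\eta).
\]
This is one JKO step of size $h$ starting from $(\rho_\tau^n,\eta_\tau^n)$. Every argument in Lemma \ref{lemma:trasport} applies to such a one-step minimiser, provided the minimiser has the regularity that Proposition \ref{prop:est_auxiliary} supplied for $(\rho_\tau^{n+1},\eta_\tau^{n+1})$. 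That regularity is exactly Corollary \ref{cor:degiorgireg}: $\tilde{\rho}_\tau(t)\in\dot{H}^{1-s}(\R^d)$ and $\tilde{\eta}_\tau(t)\in\dot{H}^{1-r}(\R^d)$. Since $\calF(\tilde\rho_\tau(t),\tilde\eta_\tau(t))\le\calF(\rho_0,\eta_0)$, we also have $\tilde\rho_\tau(t)\in\dot H^{-s}(\R^d)$ and $\tilde\eta_\tau(t)\in\dot H^{-r}(\R^d)$. The interpolation \eqref{eq:interp_H_dot} then places $\tilde\rho_\tau(t)$ in $H^{1-s}(\R^d)$ and, as in \eqref{eq:est_cross_H_1-2q}, places $\tilde\eta_\tau(t)$ in $\dot H^{1-2q}(\R^d)$ (and symmetrically $\tilde\rho_\tau(t)\in\dot H^{1-2q}(\R^d)$). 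This is where the assumption on $q$ is used.

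Step 1 is the perturbation. Take $\varphi\in C_c^\infty(\R^d;\R^d)$, set $B_\delta=\mathrm{id}+\delta\varphi$ and compare the minimiser with $((B_\delta)_\#\tilde\rho_\tau(t),\tilde\eta_\tau(t))$. Let $\tilde T_\rho$ be the optimal map from $\tilde\rho_\tau(t)$ to $\rho_\tau^n$. It exists because $\tilde\rho_\tau(t)\in L^2(\R^d)$ is absolutely continuous. Expanding the Wasserstein term to first order gives $-\frac{1}{h}\intd(\mathrm{id}-\tilde T_\rho)\cdot\varphi\,\tilde\rho_\tau(t)\d x$.

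Step 2 is the energy. The self-interaction variation is handled through \eqref{eq:lim_self_term}, pairing the strong convergence \eqref{eq:strong_self} with the weak convergence \eqref{eq:weak_self}. Both now rely on $\tilde\rho_\tau(t)\in H^{1-s}(\R^d)$ and the Rellich/interpolation argument of the earlier proof. The cross term $\langle\rho_\delta-\tilde\rho_\tau(t),\tilde\eta_\tau(t)\rangle_{\dot H^{-q}}$ is handled by pairing $|\xi|^{1-2q}\hat{\tilde\eta}_\tau(t)\in L^2(\R^d)$ with \eqref{eq:weak_self}.

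Step 3 is the conclusion. Using $\pm\delta$ gives the Euler--Lagrange identity
\[
h\,\tilde\rho_\tau(t)\,\nabla\bigl(K_s\ast\tilde\rho_\tau(t)+K_q\ast\tilde\eta_\tau(t)\bigr)=(\tilde T_\rho-\mathrm{id})\,\tilde\rho_\tau(t)\quad\text{a.e.}
\]
Squaring, dividing by $\tilde\rho_\tau(t)$ where it is positive, and integrating gives $\frac{1}{h^2}\intd|\tilde T_\rho-\mathrm{id}|^2\tilde\rho_\tau(t)\d x=\frac{1}{h^2}W_2^2(\tilde\rho_\tau(t),\rho_\tau^n)$. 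The same argument, perturbing the second component, gives the identity for $\tilde\eta_\tau(t)$.

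The main obstacle is the regularity input, namely Corollary \ref{cor:degiorgireg}. Its proof requires rerunning the heat-flow interchange of Proposition \ref{prop:est_auxiliary} with step $h$ instead of $\tau$. This gives $h\bigl(\|\tilde\rho_\tau(t)\|^2_{\dot H^{1-s}}+\|\tilde\eta_\tau(t)\|^2_{\dot H^{1-r}}\bigr)\le\calH(\rho_\tau^n,\eta_\tau^n)-\calH(\tilde\rho_\tau(t),\tilde\eta_\tau(t))$. For the right-hand side to be finite we need $\calH(\rho_\tau^n,\eta_\tau^n)<\infty$. This holds because the JKO entropy estimate is monotone, and $\calH(\tilde\rho_\tau(t),\tilde\eta_\tau(t))$ is bounded below through \eqref{eq:log} and the second-moment bound. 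Since $h>0$ is fixed, only qualitative finiteness is needed here, not a uniform bound.
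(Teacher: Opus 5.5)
Your proposal is correct and is essentially the paper's own proof, which consists only of the remark that the argument of Lemma \ref{lemma:trasport} carries over. You treat $(\tilde\rho_\tau(t),\tilde\eta_\tau(t))$ as a single JKO step of size $h=t-n\tau$ from $(\rho_\tau^n,\eta_\tau^n)$, use the regularity of Corollary \ref{cor:degiorgireg} together with the energy bound and the $q$-dependent interpolation, and rederive the Euler--Lagrange identity, which is exactly what is intended. One sign slip, also present in the paper's computation: since $\abs{x+\delta\varphi-\tilde T_\rho}^2=\abs{x-\tilde T_\rho}^2+2\delta(x-\tilde T_\rho)\cdot\varphi+O(\delta^2)$, the first-order Wasserstein term is $+\frac{1}{h}\intd(\mathrm{id}-\tilde T_\rho)\cdot\varphi\,\tilde\rho_\tau(t)\d x$, and this is the sign that actually yields your Step 3 identity with $\tilde T_\rho-\mathrm{id}$; the sign is harmless for the statement, which involves only the squared modulus.
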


We are now in the position to state the discrete energy equality.

\begin{prop}\label{prop:dei}    
Let $T>0$ be fixed and let $(\rho_\tau(t), \eta_\tau(t))$ and $(\tilde{\rho}_\tau(t), \tilde{\eta}_\tau(t))$ be the interpolations defined in \eqref{eq:seq_interpol} and \eqref{e:degiorgi} respectively, for $t\in (0,T]$. 
Then the following discrete energy equality holds
\[
\begin{aligned}
    &\frac{1}{2}\int_0^T\int_{\R^d} \abs{ \nabla \left( K_s\ast \rho_\tau+K_q \ast \eta_\tau \right) }^2\rho_\tau\d x\d t+  \frac{1}{2}\int_0^T\int_{\R^d}\abs{\nabla \left( K_r\ast \eta_\tau +K_q \ast \rho_\tau \right) }^2\eta_\tau\d x\d t\\
    & \quad + \frac{1}{2}\int_0^T\int_{\R^d}\abs{\nabla \left( K_s\ast \tilde{\rho}_\tau +K_q \ast \tilde{\eta}_\tau \right) }^2 \tilde{\rho}_\tau\d x\d t+  \frac{1}{2}\int_0^T\int_{\R^d}\abs{ \nabla \left( K_r\ast \tilde{\eta}_\tau +K_q \ast \tilde{\rho}_\tau \right) }^2\tilde{\eta}_\tau\d x\d t \\
    & \quad +\calF(\rho_\tau(T), \eta_\tau(T))=\calF(\rho_0, \eta_0).
\end{aligned}
\]
\end{prop}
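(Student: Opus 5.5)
The plan is the standard De Giorgi argument from \cite[Section 3.2]{ags}, applied on each interval $((n-1)\tau,n\tau]$ and then summed over $n$.

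Fix $n$ and, for $h\in(0,\tau]$, set
\[
\phi_n(h)\coloneqq \min_{(\rho,\eta)}\Bigl\{\frac{1}{2h}\calW_2^2\bigl((\rho_\tau^{n-1},\eta_\tau^{n-1}),(\rho,\eta)\bigr)+\calF(\rho,\eta)\Bigr\}.
\]
By the definition \eqref{e:degiorgi} of the De Giorgi interpolation, the minimum is attained at $(\tilde\rho_\tau(t),\tilde\eta_\tau(t))$ with $h=t-(n-1)\tau$.

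\textbf{Step 1 (derivative of $\phi_n$).} The function $\phi_n$ is nonincreasing, hence locally Lipschitz and a.e.\ differentiable on $(0,\tau]$. Comparing minimizers at $h$ and $h'$ gives the standard two-sided bounds. For a.e.\ $h$ this yields
\[
\frac{d}{dh}\phi_n(h)=-\frac{1}{2h^2}\calW_2^2\bigl((\rho_\tau^{n-1},\eta_\tau^{n-1}),(\tilde\rho_\tau,\tilde\eta_\tau)\bigr).
\]

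\textbf{Step 2 (continuity at the endpoints).} I would check that $\phi_n(h)\to\calF(\rho_\tau^{n-1},\eta_\tau^{n-1})$ as $h\downarrow0$. The upper bound follows by testing with $(\rho_\tau^{n-1},\eta_\tau^{n-1})$. The lower bound follows from the narrow lower semicontinuity of $\calF$ in Proposition \ref{prop:basic_propert}, because the minimizers converge to $(\rho_\tau^{n-1},\eta_\tau^{n-1})$ in $\calW_2$. Integrating the derivative from $0$ to $\tau$ then gives
\[
\frac{1}{2\tau}\calW_2^2\bigl((\rho_\tau^{n-1},\eta_\tau^{n-1}),(\rho_\tau^n,\eta_\tau^n)\bigr)+\calF(\rho_\tau^n,\eta_\tau^n)+\int_{(n-1)\tau}^{n\tau}\frac{\calW_2^2\bigl((\rho_\tau^{n-1},\eta_\tau^{n-1}),(\tilde\rho_\tau,\tilde\eta_\tau)\bigr)}{2(t-(n-1)\tau)^2}\d t=\calF(\rho_\tau^{n-1},\eta_\tau^{n-1}).
\]

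\textbf{Step 3 (identification of the metric terms).} Here I use the Euler--Lagrange identities. By Lemma \ref{lemma:trasport}, $\frac{1}{2\tau}\calW_2^2$ equals $\frac{\tau}{2}$ times the sum of the two dissipation integrals at step $n$, that is, $\frac12\int_{(n-1)\tau}^{n\tau}$ of the dissipation of the piecewise constant interpolation. By Corollary \ref{cor:degiorgiid}, which is available because of the regularity in Corollary \ref{cor:degiorgireg}, the integrand of the third term equals half the dissipation of $(\tilde\rho_\tau,\tilde\eta_\tau)$ at time $t$. Summing over $n=1,\dots,N$ telescopes the energies and gives the claimed equality with $\calF(\rho_\tau(T),\eta_\tau(T))$.

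The main obstacle is Steps 1 and 2: justifying the derivative of $\phi_n$ and the continuity of $\phi_n$ at $h=0$. The measurability of $t\mapsto$ dissipation of $(\tilde\rho_\tau,\tilde\eta_\tau)$ also requires care, since a measurable selection of minimizers is needed. For all of these I would rely on \cite[Lemma 3.2.2, Theorem 3.1.4]{ags}, which only require lower semicontinuity and coercivity of $\calF$; these are provided by Proposition \ref{prop:basic_propert} and the second moment bounds in the proof of Theorem \ref{thm:conv_sol}.
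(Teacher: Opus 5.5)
Your proposal is correct and follows the paper's proof. The paper cites \cite[Lemma 3.2.2]{ags} for the per-step De Giorgi identity (your Steps 1--2), sums over the steps, and converts the metric terms into dissipations via Lemma \ref{lemma:trasport} and Corollary \ref{cor:degiorgiid}, as in your Step 3. One small correction: monotonicity of $\phi_n$ alone does not give the local Lipschitz continuity you need to integrate $\phi_n'$; that comes from your two-sided comparison bounds, as in \cite[Theorem 3.1.4]{ags}.
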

\begin{proof}
Consider the De Giorgi variational interpolation introduced in \eqref{e:degiorgi}. By a classical argument, for $n\in \N$ such that $t\in \left(n\tau,(n+1)\tau\right]$ we have that
\begin{align*}
    \frac{1}{2\tau}\calW_2^2((\rho_\tau^n, \eta_\tau^n), (\rho_\tau^{n+1}, \eta_\tau^{n+1}))&+\frac{1}{2}\int_{n\tau}^{(n+1)\tau}\frac{\calW_2^2 ((\rho_\tau^n, \eta_\tau^n), (\tilde{\rho}_\tau(t), \tilde{\eta}_\tau(t)))}{(t-n\tau)^2}\,\d t\\
    &+\calF(\rho_\tau^{n+1}, \eta_\tau^{n+1})=\calF(\rho_\tau^n, \eta_\tau^n),
\end{align*}
see \cite[Lemma 3.2.2]{ags}. Summing for $n=0,\ldots,N-1$, using Lemma \ref{lemma:trasport} and Corollary \ref{cor:degiorgiid}  we have the thesis.
\end{proof}

We are now ready to prove the our main result stated in Theorem \ref{thm:intro_main}.

\begin{proof}
    Let $\varphi \in C_c^\infty ((0,+\infty), \R^d)$ be a given test function and consider the estimate \eqref{eq:est_for_sol} with 
    $\nabla \varphi$ in place of $\varphi$. We get
    \begin{equation}
        \label{eq:weak_sol_1}
        \int_0^\infty \intd \left( \nabla (K_s \ast \rho_\tau + K_q \ast \eta_\tau \right) \cdot \nabla \varphi \rho_\tau \d x \d t = \frac{1}{\tau} \int_0^\infty \intd (T_\rho-\mathrm{id} ) \cdot \nabla \varphi \rho_\tau \d x \d t,
    \end{equation}
    where $T_\rho (t)$ is the optimal transport map between $\rho_\tau^{n+1}$ and $\rho_\tau^n$ as $t \in (n\tau, (n+1)\tau]$. Concerning the left-hand side of \eqref{eq:weak_sol_1}, by Lemma \ref{lemma:convergence} there is a subsequence $\tau_n$ such that
    \[
        \int_0^\infty \intd  \nabla \left(K_s \ast \rho_\tau + K_q \ast \eta_\tau \right) \cdot \nabla \varphi \rho_\tau \d x \d t \rightarrow \int_0^\infty \intd  \nabla \left(K_s \ast \rho + K_q \ast \eta \right) \cdot \nabla \varphi \rho \d x \d t.
    \]
    By using the definition of optimal transport map $T_\rho$ and the notion of push-forward in \eqref{eq:push_forw}, by considering the Taylor expansion of $\varphi  (T_\rho(t))$ around $x$ and the estimates in the proof of Theorem \ref{thm:conv_sol}, the right-hand side of \eqref{eq:weak_sol_1} can be written as
    \[
        \intd \varphi (x) [\rho_\tau^n (x) - \rho_\tau^{n+1} (x)] \d x = \intd (x- T_\rho(x)) \cdot \nabla \varphi (T_\rho (x)) \rho_\tau^n (x) \d x + o (\tau).
    \]
    Assuming $\rho_\tau^n = \rho_\tau (t)$ as $t \in (n\tau, (n+1)\tau]$, taking $s<t$ and
    \[
        h= \left\lceil \frac{t}{\tau} \right\rceil, \qquad k= \left\lceil \frac{s}{\tau} \right\rceil,
    \]
    we sum as $n=h, \ldots, k-1$ and we get
    \[
        \intd \varphi(x) [\rho_\tau (t,x) - \rho_\tau (s,x)] \d x = \sum_{n=h}^{k-1} \intd (x- T_\rho(x)) \cdot \nabla \varphi (T_\rho (x)) \rho_\tau^n (x) \d x + \sum_{n=h}^{k-1} o (\tau).
    \]
    Diving by $s-t$, sending first $\tau \to 0$ and then $s \to t$, we deduce
    \[
        \intd \varphi \de_t \rho \d x = - \intd \rho \nabla \left( K_s \ast \rho + K_q \ast \eta \right) \cdot \nabla \varphi \d x.
    \]
    Finally, by considering a test function $\psi \in C_c^\infty (0,T)$, and integrating in time, we conclude
    \[
        \int_0^T \intd \varphi \psi \de_t \rho  \d x \d t = - \int_0^T \intd \rho \nabla \left( K_r \ast \eta + K_q \ast \rho \right) \cdot \nabla \varphi \psi \d x \d t,
    \]
    that proves convergence to weak solution. The energy dissipation inequality follows from a standard lower semi-continuity argument. Indeed, by invoking and \cite[Theorem 5.4.4]{ags} and \eqref{eq:proof_energy_1} we have
    \begin{equation*}
        \liminf_{n\to \infty}\int_0^T\int_{\R^d} \abs{ \nabla \left( K_s \ast \rho_{\tau_n} + K_q \ast \eta_{\tau_n} \right) }^2\rho_{\tau_n}\,\d x\geq \int_0^T\int_{\R^d}\abs{ \nabla \left( K_s \ast \rho + K_q \ast \eta\right) }^2\rho\,\d x.
    \end{equation*}
    The same inequality holds for the second species. Note that by a triangulation, \eqref{eq:proof_prop_2} and the definition in \eqref{e:degiorgi} we have
    \[
        \calW_2^2\left((\tilde{\rho}_\tau(t), \tilde{\eta}_\tau(t)),(\rho(t), \eta(t))\right)\leq C\tau \calF((\rho_0, \eta_0)).
    \]
    Thus, $(\tilde{\rho}_{\tau_n}(t), \tilde{\eta}_{\tau_n}(t))\to(\rho(t), \eta(t))$ narrowly as $n\to +\infty$. By a similar argument to the one used above we can argue the weak $L^2$-convergence of $\nabla \left( K_s \ast \tilde{\rho}_{\tau_n} + K_q \ast \tilde{\eta}_{\tau_n} \right)$ to the same limit of $\nabla \left( K_s \ast \rho_{\tau_n} + K_q \ast \eta_{\tau_n} \right)$ and similarly for the second species. Hence, we get
    \begin{align*}
     \calF(\rho(T), \eta(T)) & +\frac{1}{2}\int_0^T\int_{\R^d}\abs{\nabla \left( K_s \ast \rho(t)+K_q\ast \eta (t) \right)}^2\rho\d x\d t \\
     & \quad +  \frac{1}{2}\int_0^T\int_{\R^d}\abs{\nabla \left( K_r \ast \eta(t) + K_q \ast \rho(t) \right) }^2\eta\d x\d t\leq\calF(\rho_0, \eta_0),
    \end{align*}
    and the assertion is proved.
\end{proof}

\section{Conclusion and perspectives}
In this paper, we have investigated the existence of weak solutions for a two-species system of nonlocal continuity equations driven by Riesz potentials. A key aspect of our analysis has been the use of the formal gradient flow structure associated to the interaction energy functional in the Wasserstein product space. The main contribution of this work lies in the treatment of  singular Riesz kernels, particularly for the cross-interaction terms. While the system's coupling prevents a straightforward application of standard single-species gradient flow theory, we have shown that the variational framework provided by the JKO minimizing movement scheme remains a powerful tool for establishing existence. Our results extend the existing literature by allowing for singular cross-interaction kernels under a symmetry assumption, bridging the gap between smooth multi-species models and singular single-species equations. Future research could explore the cases where the system might deviate from a pure gradient flow structure, such as under non-symmetric singular cross-interactions or the small inertia limit in the spirit of \cite{choi_fagioli_iorio}.

\section*{Acknowledgments}
SF is partially supported by the Italian “National Centre for HPC, Big Data and Quantum Computing” - Spoke 5 “Environment and Natural Disasters” and by the Ministry of University and Research (MIUR) of Italy under the grant PRIN 2020- Project N. 20204NT8W4, Nonlinear Evolutions PDEs, fluid
dynamics and transport equations: theoretical foundations and applications.
SF and VI are partially supported by the InterMaths Network, \url{www.intermaths.eu}. SF and VI are also partially supported by the INdAM-GNAMPA project 2025 code CUP E5324001950001 ``Teoria e applicazioni dei modelli evolutivi:
trasporto ottimo, metodi variazionali e
approssimazioni particellari deterministiche'', and by the INdAM-GNAMPA project 2026 code CUP E53C25002010001 ``Modelli di reazione-diffusione-trasporto: dall'analisi alle applicazioni''.

\appendix
\section{Useful well-known results}\label{sec:appendix}
This section makes a list of well-known theorems and results that are useful for this manuscript.
\begin{thm}[A refined version of Ascoli-Arzel\`{a} Theorem \cite{ags}]\label{ascoli}
    Let $(\mathcal{S}, d)$ be a complete metric space and let $T>0.$ Let $K\subset \mathcal{S}$ be a sequentially compact set w.r.t. a weaker topology $\sigma$ on $\mathcal{S},$ and let $u_n:[0,T]\to \mathcal{S}$ be curves such that
    \begin{gather*}
    u_n(t)\in K \quad \forall n\in\N, \quad t\in[0,T], \\
    \limsup_{n\to \infty} d(u_n(s),u_n(t))\le \omega(s,t)\quad \forall s,t\in[0,T], 
    \end{gather*}
    for a (symmetric) function $\omega:[0,T]\times[0,T]\to [0,\infty),$ such that 
    \[
    \lim_{(s,t)\to(r,r)}\omega(s,t)=0 \quad \forall r\in[0,T]\setminus \mathcal{C},
    \]
    where $\mathcal{C}$ is an (at most) countable subset of $[0,T].$ Then there exists an increasing subsequence $k\mapsto n(k)$  and a limit curve $u:[0,T]\to \mathcal{S}$ such that 
    \[
        u_{n(k)}(t)\xrightarrow{\sigma} u(t)\quad \forall t\in[0,T],\quad u \quad \text{is d-continuous in}\quad  [0,T]\setminus \mathcal{C}.
    \]
\end{thm}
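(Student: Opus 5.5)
The statement is the refined Ascoli--Arzel\`a theorem from \cite{ags}. The plan is the classical diagonal argument. First, construct the limit on a countable dense set. Second, use the equicontinuity-type modulus $\omega$ to extend it to all of $[0,T]$. The metric $d$ controls the continuity, while the weaker topology $\sigma$ provides compactness.

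\textbf{Countable dense set.} Let $D\subset[0,T]$ be countable, dense, and containing $\mathcal{C}$ and $0,T$. For each $t\in D$ the sequence $u_n(t)$ lies in $K$, which is sequentially $\sigma$-compact. Enumerating $D$ and extracting nested subsequences, the diagonal subsequence $n(k)$ gives $u_{n(k)}(t)\xrightarrow{\sigma}u(t)$ for every $t\in D$.

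\textbf{Estimate on $D$.} Take $s,t\in D$. I use that $d$ is sequentially $\sigma$-lower semicontinuous. This is the standing compatibility assumption in \cite{ags}; for the Wasserstein distance and narrow convergence it is the usual lower semicontinuity. It gives
\[
d(u(s),u(t))\le\liminf_k d(u_{n(k)}(s),u_{n(k)}(t))\le\omega(s,t).
\]

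\textbf{Extension off $D$.} Fix $t\in[0,T]\setminus D$, so that $t\notin\mathcal{C}$. Choose $t_j\in D$ with $t_j\to t$. Since $\omega(t_i,t_j)\to0$ as $(t_i,t_j)\to(t,t)$, the sequence $u(t_j)$ is $d$-Cauchy. By completeness of $(\mathcal S,d)$ it has a $d$-limit, which I call $u(t)$; it does not depend on the chosen sequence.

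To show $u_{n(k)}(t)\xrightarrow{\sigma}u(t)$, note that $u_{n(k)}(t)\in K$, so every subsequence has a further $\sigma$-convergent subsequence, with limit $v$. By lower semicontinuity,
\[
d(v,u(t_j))\le\limsup_k d(u_{n(k)}(t),u_{n(k)}(t_j))\le\omega(t,t_j).
\]
Letting $j\to\infty$ gives $v=u(t)$. Uniqueness of the limit then yields convergence of the whole subsequence, provided $\sigma$-limits of sequences in $K$ are unique (assumed Hausdorff).

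\textbf{Continuity.} Passing the bound $d(u(s),u(t))\le\omega(s,t)$ from $D$ to all pairs in the same way, with $\omega(s,t)\to0$ as $(s,t)\to(r,r)$ for $r\notin\mathcal C$, gives $d$-continuity of $u$ on $[0,T]\setminus\mathcal{C}$.

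The main obstacle is the extension step, in identifying the $\sigma$-limit at non-dense points. It needs $\sigma$-lower semicontinuity of $d$, which is implicit in the statement but standard in \cite{ags}, and it needs compactness to pass from subsequences to the full subsequence. Points of $\mathcal{C}$ cause no difficulty, since they were included in $D$ from the start.
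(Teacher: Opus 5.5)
Your argument is correct. It is essentially the standard proof of \cite[Proposition 3.3.1]{ags}, which the paper quotes without proof: diagonal extraction on a countable dense set containing $\mathcal{C}$, transfer of the modulus $\omega$ to the limit via sequential $\sigma$-lower semicontinuity of $d$, extension by completeness, and identification of every $\sigma$-cluster point at the remaining times by letting $t_j\to t$. You are also right that the statement as reproduced here silently relies on the standing assumption in \cite{ags} that $\sigma$ is Hausdorff and compatible with $d$ (i.e.\ $d$ is sequentially $\sigma$-lower semicontinuous), which in the paper's application is the lower semicontinuity of $W_2$ under narrow convergence.
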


\begin{thm}[Extended Aubin-Lions Lemma]\label{aubin}
On a Banach space $(X,d)$, let  $\mathscr{Y}:X\to [0,\infty]$ be a given lower semi-continuous functional with relatively compact sub-levels in $X$. Let $\mathfrak{d}:X\times X\to [0,\infty]$ be a given pseudo-distance on $X$, that is, $\mathfrak{d}$ is lower semi-continuous and $\mathfrak{d}(\rho,\eta)=0$ for any $\rho,\eta\in X$ with $\mathscr{Y}[\rho],\mathscr{Y}[\eta]<\infty$ implies $\rho=\eta$.
    Let further $U$ be a set of measurable functions $u:[0,T]\to X,$ with a fixed $T>0.$ If 
    \[
    \sup_{u\in U}\int_0^T \mathscr{Y}[u(t)]dt<\infty \quad \text{and}\quad \lim_{h\downarrow 0}\sup_{u\in U}\int_0^{T-h} \mathfrak{d}(u(t+h),u(t))dt = 0,
    \]
    then $U$ contains an infinite sequence $\{u_n\}_{n\in\N}$ that converges in measure (w.r.t.\ $t\in[0,T]$) to a limit $u:[0,T]\to X.$

\end{thm}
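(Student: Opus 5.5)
The approach I would take to Theorem \ref{aubin} follows the Young-measure strategy of Rossi and Savar\'e. The idea is to regard each $u\in U$ as a random variable on $([0,T],\mathcal{L}^1/T)$ with values in $X$. The integral bound on $\mathscr{Y}$ gives tightness, so a parametrized (Young) measure limit exists. The time-equicontinuity hypothesis on $\mathfrak{d}$ then forces this limit to be a Dirac mass for a.e.\ $t$, and a Dirac Young-measure limit is exactly convergence in measure.

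\textbf{Step 1 (tightness).} Set $M:=\sup_{u\in U}\int_0^T\mathscr{Y}[u(t)]\d t$. Since $\mathscr{Y}$ is lower semi-continuous with relatively compact sublevels, the closures $K_c:=\overline{\{\mathscr{Y}\le c\}}$ are compact. Chebyshev's inequality gives $\mathcal{L}^1(\{t:u(t)\notin K_c\})\le M/c$ uniformly in $u\in U$. Hence the family of maps $t\mapsto(t,u(t))$ generates a tight family of measures $\nu_u:=(\mathrm{id},u)_\#\mathcal{L}^1$ on $[0,T]\times X$. Prokhorov's theorem then gives a sequence $u_n$ and a Young measure $\{\nu_t\}_{t\in[0,T]}$ with $\nu_{u_n}\to \d t\otimes\nu_t$ narrowly. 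The fundamental theorem for Young measures also gives the following:
\begin{itemize}
\item[(a)] $\nu_t$ is concentrated on $\{\mathscr{Y}<\infty\}$ for a.e.\ $t$, by lower semicontinuity and Fatou, since $\int_0^T\!\int\mathscr{Y}\d\nu_t\d t\le M$;
\item[(b)] $\nu_t$ is concentrated on the set of limit points of $\{u_n(t)\}$.
\end{itemize}

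\textbf{Step 2 (the limit is Dirac).} This is the main obstacle. I would consider the pairs $(u_n(t),u_n(t+h))$ and the Young measures on $X\times X$ they generate, after passing to a further diagonal subsequence over rational $h$. The lower semicontinuity of $\mathfrak{d}$ together with the hypothesis $\lim_{h\downarrow0}\sup_{u\in U}\int_0^{T-h}\mathfrak{d}(u(t+h),u(t))\d t=0$ should give
\[
\lim_{h\downarrow0}\int_0^{T-h}\!\!\iint\mathfrak{d}(x,y)\d\boldsymbol{\mu}_{t,h}(x,y)\d t=0 .
\]
Here $\boldsymbol{\mu}_{t,h}$ denotes the limiting two-point measures. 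The delicate part is relating $\boldsymbol{\mu}_{t,h}$ to $\nu_t\otimes\nu_t$, or at least to a coupling of $\nu_t$ with itself, as $h\to0$. I would first try to do this through Lebesgue points of $t\mapsto\nu_t$ in a weak sense, obtaining a measure $\boldsymbol{\mu}_t\in\Pi(\nu_t,\nu_t)$ with $\iint\mathfrak{d}\d\boldsymbol{\mu}_t=0$ for a.e.\ $t$.

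Once such a coupling is available, the argument closes as follows. If the coupling obtained is $\nu_t\otimes\nu_t$, then $\mathfrak{d}(x,y)=0$ for $\nu_t\otimes\nu_t$-a.e.\ $(x,y)$. Combined with (a) and the separation property of $\mathfrak{d}$ on finite-energy points, this gives $x=y$ for $\nu_t\otimes\nu_t$-a.e.\ pair, which forces $\nu_t=\delta_{u(t)}$. If only a general coupling is available, I would instead run an inductive splitting argument: disintegrate $\nu_t$ over disjoint compact pieces and compare the pieces, as Rossi--Savar\'e do.

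\textbf{Step 3 (conclusion).} A Young measure limit of the form $\nu_t=\delta_{u(t)}$ for a.e.\ $t$ is equivalent to convergence in measure $u_n\to u$. For each $\varepsilon>0$, test the narrow convergence against $\min\{1,\mathrm{dist}(x,u(t))/\varepsilon\}$; to justify this test function, approximate it in $t$ by continuous functions via Scorza--Dragoni. This yields $\mathcal{L}^1(\{t:\mathrm{dist}(u_n(t),u(t))>\varepsilon\})\to0$, which is the claim. Measurability of $u$ follows from its being the barycentre of the Young measure.
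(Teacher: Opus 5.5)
The paper gives no proof of this statement: it is recorded in the appendix as a known result (the compactness theorem of Rossi and Savar\'e), so there is no argument of the paper to compare with. Judged on its own, your Steps 1 and 3 are standard and fine. Step 2, however, has a genuine gap, exactly where you flagged the difficulty.

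\textbf{The gap.} By freezing $h$ and sending $n\to\infty$ first, the one-variable pairs $t\mapsto(u_n(t),u_n(t+h))$ only produce a \emph{coupling} $\boldsymbol{\mu}_{t,h}\in\Pi(\nu_t,\nu_{t+h})$. The only information that survives is, for each fixed $h$,
\[
\int_0^{T-h}\iint\mathfrak{d}\,\mathrm{d}\boldsymbol{\mu}_{t,h}\,\mathrm{d}t\le\sup_{u\in U}\int_0^{T-h}\mathfrak{d}\bigl(u(t+h),u(t)\bigr)\,\mathrm{d}t .
\]
The uniformity in $n$ of the hypothesis has been discarded, and what remains is too weak. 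Take $X=\R$, $\mathscr{Y}(x)=\abs{x}$, $\mathfrak{d}(x,y)=\abs{x-y}$ and $u_k(t)=\sin(2\pi\,k!\,t)$. For every rational $h=p/q$ and every $k\ge q$ one has $u_k(t+h)=u_k(t)$. Hence $\boldsymbol{\mu}_{t,h}=(\mathrm{id},\mathrm{id})_\#\nu$, with $\nu$ the arcsine law, and your displayed limit equals $0$. Yet $\nu_t\equiv\nu$ is not a Dirac mass, and no subsequence of $(u_k)$ converges in measure. This family violates the equicontinuity hypothesis, but it satisfies everything your Step 2 retains from it, so nothing downstream can recover the conclusion.

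The same observation rules out your fallback. Whenever $\mathfrak{d}$ vanishes on the diagonal (as here), the diagonal coupling $(\mathrm{id},\mathrm{id})_\#\nu_t$ is a zero-cost self-coupling for \emph{every} $\nu_t$. So no splitting argument can extract a Dirac mass from the mere existence of such a coupling. Nothing in your construction forces the limit coupling to be $\nu_t\otimes\nu_t$.

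\textbf{The fix.} Do not freeze $h$; work with the two-time map $(t,s)\mapsto(u_n(t),u_n(s))$ on $(0,T)^2$. It is tight with respect to $\mathscr{Y}(x)+\mathscr{Y}(y)$. It generates the \emph{product} Young measure $\nu_s\otimes\nu_t$, as one checks on test functions $\phi_1(t)\phi_2(s)\psi_1(x)\psi_2(y)$. Averaging the hypothesis over $h\in(0,h_0)$ gives a bound on the strip $\{0<s-t<h_0\}$ that is uniform in $n$:
\[
\frac{1}{h_0}\int_0^{h_0}\!\!\int_0^{T-h}\mathfrak{d}\bigl(u_n(t+h),u_n(t)\bigr)\,\mathrm{d}t\,\mathrm{d}h\le\omega(h_0)\coloneqq\sup_{0<h\le h_0}\,\sup_{u\in U}\int_0^{T-h}\mathfrak{d}\bigl(u(t+h),u(t)\bigr)\,\mathrm{d}t .
\]
Lower semicontinuity along Young-measure convergence then yields
\[
\int_0^{T-h_0}\iint\mathfrak{d}\,\mathrm{d}(\bar\nu_{t,h_0}\otimes\nu_t)\,\mathrm{d}t\le\omega(h_0),\qquad \bar\nu_{t,h_0}\coloneqq\frac{1}{h_0}\int_t^{t+h_0}\nu_s\,\mathrm{d}s .
\]
Almost every $t$ is a narrow Lebesgue point of $t\mapsto\nu_t$. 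This uses a countable convergence-determining family on the separable, $\sigma$-compact set $\bigcup_{c\in\mathbb{N}}\{\mathscr{Y}\le c\}$ that carries all the measures. At such points, Fatou's lemma and the lower semicontinuity of $\mathfrak{d}$ give $\int_0^T\iint\mathfrak{d}\,\mathrm{d}(\nu_t\otimes\nu_t)\,\mathrm{d}t=0$. That is exactly the input your first branch needs to conclude $\nu_t=\delta_{u(t)}$ for a.e.\ $t$.
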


\begin{definition}[$k$-flow]
    A semigroup $S^\Psi:[0,+\infty]\times \calP_2(\R^d)\to \calP_2(\R^d)$ is a \emph{$k$-flow} for a functional $\Psi:\calP_2(\R^d)\to \R\cup\{+\infty\}$ with respect to the $2$-Wasserstein distance $W_2$ if, for any arbitrary $\rho\in\calP_2(\R^d),$ the curve $t\mapsto S^\Psi_t\rho$ is absolutely continuous on $[0,+\infty]$ and satisfies the evolution variational inequality (E.V.I.)
    \begin{equation} \label{eq:evi}
        \frac{1}{2}\frac{d^+}{d t} W_2^2(S^\Psi_t\rho,\Tilde{\rho})+\frac{k}{2} W_2^2(S^\Psi_t \rho,\Tilde{\rho}) \leq \Psi(\Tilde{\rho})-\Psi(S^\Psi_t\rho),
    \end{equation}
    for all $t>0,$ and for any $\Tilde{\rho}\in\calP_2(\R^d)$ with $\Psi(\Tilde{\rho})<\infty.$
\end{definition}
The symbol $\frac{d^+}{d t}$ stands for the limit superior of the respective difference quotients and equals the derivative if the latter exists.

\begin{thm}
    Assume that a functional $\Psi:\calP_2(\R^d)\to \R\cup\{+\infty\}$ is $\lambda$-convex (along geodesics), with a modulus of convexity $\lambda\in \R,$ that is, along every constant speed geodesic $\rho:[0,1]\to \calP_2(\R^d),$
    \[
    \Psi[\rho(t)]\le (1-t)\Psi[\rho(0)]+t\Psi[\rho(1)]-\frac{\lambda}{2}t(1-t)W_2^2(\rho(0),\rho(1))
    \]
    holds for every $t\in[0,1].$ Then $\Psi$ posses a uniquely determined $k$-flow, with some $k\le \lambda.$ Conversely, if a functional $\Psi$ possesses a $k$-flow, and if it is monotonically non-increasing along that flow, then $\Psi$ is $\lambda$-convex, with some $\lambda\ge k.$
\end{thm}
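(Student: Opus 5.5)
The plan is to treat the two implications separately, relying on the metric theory of gradient flows in \cite{ags} and on the Daneri--Savar\'e characterisation of EVI flows. In both directions the central object is the one-step functional
\[
\Phi_\tau(\tau',\mu;\nu)\coloneqq \frac{1}{2\tau}W_2^2(\mu,\nu)+\Psi(\nu),
\]
together with its minimisers.

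\emph{Existence of a $k$-flow.} I would first construct the flow by the minimising movement scheme, exactly as in \eqref{eq:jko_scheme} but for a single functional. For a generic $\lambda$-convex $\Psi$ the key point is that $\nu\mapsto\Phi_\tau(\mu;\nu)$ should be $(\tfrac1\tau+\lambda)$-convex along some curve joining any two points. Plain $\lambda$-convexity along geodesics does not give this, because $W_2^2(\mu,\cdot)$ is not $2$-convex along geodesics in $\calP_2(\R^d)$. I therefore plan to use generalized geodesics with base point $\mu$, along which $\tfrac12W_2^2(\mu,\cdot)$ is $1$-convex. For the functionals of interest (e.g.\ the entropy and the internal energies used in the flow interchange of Section~\ref{sec:jko}) the $\lambda$-convexity extends to generalized geodesics, and I would add this as a standing assumption if needed. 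With that in hand, comparing the minimiser $\mu_\tau$ with points on the generalized geodesic towards a competitor $\tilde\rho$ gives the discrete EVI
\[
\frac{1}{2\tau}\bigl(W_2^2(\mu_\tau,\tilde\rho)-W_2^2(\mu,\tilde\rho)\bigr)+\frac{\lambda}{2}W_2^2(\mu_\tau,\tilde\rho)\le \Psi(\tilde\rho)-\Psi(\mu_\tau).
\]
Summing over steps yields uniform $W_2$-H\"older estimates, as in the proof of Theorem~\ref{thm:conv_sol}. Compactness then follows from Theorem~\ref{ascoli}, and lower semicontinuity of $\Psi$ lets one pass to the limit in the integrated form of the discrete EVI. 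This produces \eqref{eq:evi} with some $k\le\lambda$, where the loss comes only from the error terms.

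\emph{Uniqueness.} Given two solutions $\rho_t,\sigma_t$ of \eqref{eq:evi}, I would use the doubling-of-variables trick. Apply EVI for $\rho_t$ with test point $\sigma_s$, and EVI for $\sigma_s$ with test point $\rho_t$, then add the two inequalities and set $s=t$. This gives $\frac{d^+}{dt}W_2^2(\rho_t,\sigma_t)\le -2kW_2^2(\rho_t,\sigma_t)$. Gr\"onwall's inequality then yields the contraction $W_2(\rho_t,\sigma_t)\le e^{-kt}W_2(\rho_0,\sigma_0)$, hence uniqueness and the semigroup property.

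\emph{Converse.} Let $\rho:[0,1]\to\calP_2(\R^d)$ be a constant speed geodesic and fix $\theta\in(0,1)$. Apply \eqref{eq:evi} to the flow started at $\rho(\theta)$, once with test point $\rho(0)$ weighted by $1-\theta$ and once with test point $\rho(1)$ weighted by $\theta$, and add. At $t=0$, the geodesic identity $W_2(\rho(\theta),\rho(i))=|i-\theta|W_2(\rho(0),\rho(1))$ combined with the triangle inequality forces
\[
(1-\theta)W_2^2(S_t\rho(\theta),\rho(0))+\theta W_2^2(S_t\rho(\theta),\rho(1))\ge \theta(1-\theta)W_2^2(\rho(0),\rho(1)).
\]
At $t=0$ this is an equality, so the right derivative of the left-hand side is nonnegative. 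Monotonicity of $\Psi$ along $S$ lets me replace $\Psi(S_t\rho(\theta))$ by a quantity bounded above by $\Psi(\rho(\theta))$, so the inequality survives as $t\downarrow0$. Rearranging gives $\lambda$-convexity with $\lambda\ge k$.

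I expect the main obstacle to be this derivative step. One needs to justify that the $\limsup$ difference quotient of the weighted sum at $t=0^+$ is nonnegative, and to control $\Psi(S_t\rho(\theta))$ as $t\downarrow0$, which requires a lower semicontinuity argument combined with the monotonicity hypothesis. If the naive argument fails, my fallback is the Daneri--Savar\'e approach: integrate EVI on $[0,t]$, exploit the regularising estimate $\Psi(S_t\mu)\le\Psi(\nu)+\tfrac{1}{2t}W_2^2(\mu,\nu)$ that follows from EVI, and then let $t\downarrow0$.
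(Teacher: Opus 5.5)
The paper does not prove this statement. It is quoted in the appendix from \cite{matthes2009family}, going back to \cite{ags} and Daneri--Savar\'e. As written it is imprecise. Without lower semicontinuity, and with the flow demanded from every $\rho\in\calP_2(\R^d)$, it cannot hold: the indicator of $\{\delta_0\}$ is lower semicontinuous and geodesically convex, yet no curve issuing from $\rho\neq\delta_0$ can satisfy \eqref{eq:evi} and be continuous at $t=0$. So assuming lower semicontinuity, convexity along generalized geodesics and initial data in $\overline{D(\Psi)}$ is legitimate; these are the hypotheses of \cite[Section 11.2]{ags}. State them up front rather than ``if needed''.

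The genuine gap is the passage to the limit in your existence argument. Summing the discrete EVI gives, for the piecewise constant interpolant $\mu_\tau$ and grid times $s<t$,
\[
\tfrac12 W_2^2(\mu_\tau(t),\tilde\rho)-\tfrac12 W_2^2(\mu_\tau(s),\tilde\rho)+\tfrac{\lambda}{2}\int_s^t W_2^2(\mu_\tau(\sigma),\tilde\rho)\d\sigma\le (t-s)\Psi(\tilde\rho)-\int_s^t\Psi(\mu_\tau(\sigma))\d\sigma .
\]
Theorem~\ref{ascoli} only yields narrow convergence, under which $W_2^2(\cdot,\tilde\rho)$ is merely lower semicontinuous. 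That handles the term at time $t$. The term $-\tfrac12 W_2^2(\mu_\tau(s),\tilde\rho)$ for $s>0$, and the $\lambda$-term when $\lambda<0$, need the reverse inequality. So compactness plus lower semicontinuity gives neither \eqref{eq:evi} nor the semigroup property.

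What you need is convergence in $W_2$. In \cite[Chapter 4]{ags} this comes from the discrete inequality itself. Testing the scheme with step $\tau$ against the values of the scheme with step $\eta$ gives a Cauchy estimate $\sup_{[0,T]}W_2(\mu_\tau,\mu_\eta)\le C_T\sqrt{\tau+\eta}$ for $\mu_0\in D(\Psi)$. This extends to $\overline{D(\Psi)}$ by density and contraction, and it also shows there is no loss: $k=\lambda$. Your uniqueness argument by doubling variables is fine.

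In the converse, the derivative step does fail, as you feared. \eqref{eq:evi} holds only for $t>0$, and $\frac{d^+}{dt}$ of your weighted sum $f(t)$ has no sign at positive times, so you cannot evaluate at $t=0$. Moreover, monotonicity points the wrong way. It gives $\Psi(S_t\rho(\theta))\le\Psi(\rho(\theta))$, whereas you need $\Psi(\rho(\theta))\le\liminf_{t\downarrow0}\Psi(S_t\rho(\theta))$. That is lower semicontinuity, again an assumption to add.

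Your fallback works directly, with no regularising estimate:
\begin{itemize}
\item integrate \eqref{eq:evi} on $[0,t]$ for both test points and take the weighted sum;
\item drop $f(t)-f(0)\ge 0$, which is your three-point inequality;
\item divide by $t$ and let $t\downarrow0$, using lower semicontinuity on $\frac1t\int_0^t\Psi(S_\sigma\rho(\theta))\d\sigma$.
\end{itemize}
This yields
\[
\Psi(\rho(\theta))\le(1-\theta)\Psi(\rho(0))+\theta\Psi(\rho(1))-\tfrac{k}{2}\theta(1-\theta)W_2^2(\rho(0),\rho(1)),
\]
that is, $\lambda=k$. Monotonicity is only a convenience here: it lets you replace the time average by $\Psi(S_t\rho(\theta))$.
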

Below we state the flow interchange lemma, see \cite{matthes2009family} for more details.

\begin{lemma}(Flow interchange)\label{flowinterlem}
    Let $\Psi:\calP_2(\R^d)\to (-\infty,\infty]$ be a lower semi-continuous functional which possesses a $k$-flow $S^\Psi.$ Define further the dissipation of $\calF$ along $S^\Psi$ by 
    \[
        \mathsf{D}^\Psi\calF(\rho)\coloneqq \limsup_{s\downarrow0}\frac{1}{s}\left(\calF(\rho)-\calF(S^\Psi_s\rho)\right)
    \]
    for every $\rho\in\calP_2(\R^d).$ If $\rho_\tau^{n-1}$ and $\rho_\tau^n$ are two consecutive steps of the minimizing movement scheme \eqref{eq:jko_scheme} then 
    \[
        \Psi(\rho_\tau^{n-1})-\Psi(\rho_\tau^n)\ge \tau \mathsf{D}^\Psi\calF(\rho_\tau^n)+\frac{k}{2}W_2^2(\rho_\tau^n,\rho_\tau^{n-1}),
    \]
    In particular, if $\Psi(\rho_\tau^{n-1})<\infty$, then $\mathsf{D}^\Psi\calF(\rho_\tau^n)<\infty$.
\end{lemma}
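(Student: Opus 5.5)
We argue directly from the minimality of $\rho_\tau^n$ in \eqref{eq:jko_scheme}, using the flow $S^\Psi$ to build competitors, and then apply the E.V.I. \eqref{eq:evi} with reference measure $\tilde\rho=\rho_\tau^{n-1}$.

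\emph{Trivial case.} If $\Psi(\rho_\tau^{n-1})=+\infty$ the left-hand side is $+\infty$ and there is nothing to prove. We therefore assume $\Psi(\rho_\tau^{n-1})<\infty$, which is exactly what the E.V.I. requires of the reference measure.

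\emph{Step 1: competitor comparison.} For every $s>0$, the measure $S^\Psi_s\rho_\tau^n$ is an admissible competitor. Minimality gives
\[
\calF(\rho_\tau^n)+\frac{1}{2\tau}W_2^2(\rho_\tau^n,\rho_\tau^{n-1})\le \calF(S^\Psi_s\rho_\tau^n)+\frac{1}{2\tau}W_2^2(S^\Psi_s\rho_\tau^n,\rho_\tau^{n-1}).
\]
Rearranging, dividing by $s$ and taking $\limsup_{s\downarrow0}$ yields
\[
\tau\,\mathsf{D}^\Psi\calF(\rho_\tau^n)\le \frac12\limsup_{s\downarrow 0}\frac{W_2^2(S^\Psi_s\rho_\tau^n,\rho_\tau^{n-1})-W_2^2(\rho_\tau^n,\rho_\tau^{n-1})}{s}.
\]
This is the same manipulation performed in the proof of Proposition \ref{prop:est_auxiliary}.

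\emph{Step 2: bound the right derivative at $t=0$.} This is the main obstacle, because \eqref{eq:evi} is only asserted for $t>0$. We integrate it on $(\varepsilon,h)$, which is legitimate since $t\mapsto S^\Psi_t\rho_\tau^n$ is absolutely continuous, so $t\mapsto W_2^2(S^\Psi_t\rho_\tau^n,\rho_\tau^{n-1})$ is absolutely continuous as well. Letting $\varepsilon\downarrow0$ gives
\[
\frac12\Bigl[W_2^2(S^\Psi_h\rho_\tau^n,\rho_\tau^{n-1})-W_2^2(\rho_\tau^n,\rho_\tau^{n-1})\Bigr]\le\int_0^h\Bigl(\Psi(\rho_\tau^{n-1})-\Psi(S^\Psi_t\rho_\tau^n)-\frac k2W_2^2(S^\Psi_t\rho_\tau^n,\rho_\tau^{n-1})\Bigr)\d t.
\]
We then divide by $h$ and let $h\downarrow0$, treating the two terms on the right separately:
\begin{itemize}
\item By continuity of the curve at $t=0$, $W_2^2(S^\Psi_t\rho_\tau^n,\rho_\tau^{n-1})\to W_2^2(\rho_\tau^n,\rho_\tau^{n-1})$ as $t\downarrow0$.
\item By lower semicontinuity of $\Psi$, $\liminf_{t\downarrow0}\Psi(S^\Psi_t\rho_\tau^n)\ge\Psi(\rho_\tau^n)$. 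Hence the $\limsup$ of the averages of $-\Psi(S^\Psi_t\rho_\tau^n)$ is at most $-\Psi(\rho_\tau^n)$.
\end{itemize}
For the averaging step one should check that $\Psi$ is bounded below along the flow near $t=0$. This follows from lower semicontinuity together with continuity of the curve. If $\Psi(\rho_\tau^n)=+\infty$, the resulting inequality forces the desired conclusion trivially. Altogether,
\[
\frac12\frac{d^+}{dt}W_2^2(S^\Psi_t\rho_\tau^n,\rho_\tau^{n-1})\Big|_{t=0}\le \Psi(\rho_\tau^{n-1})-\Psi(\rho_\tau^n)-\frac k2W_2^2(\rho_\tau^n,\rho_\tau^{n-1}).
\]

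\emph{Step 3: conclusion.} Combining Steps 1 and 2 gives the claimed inequality. When $\Psi(\rho_\tau^{n-1})<\infty$, its right-hand side is finite: $\Psi(\rho_\tau^n)>-\infty$ by assumption, and the Wasserstein term is finite. Hence $\mathsf{D}^\Psi\calF(\rho_\tau^n)<\infty$.
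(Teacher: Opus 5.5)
Your proposal is correct. It follows the standard Matthes--McCann--Savar\'e argument that the paper cites for this lemma rather than proving it, and the paper reproduces the same steps inline in the proof of Proposition~\ref{prop:est_auxiliary}: compare the minimizer $\rho_\tau^n$ with the competitor $S^\Psi_s\rho_\tau^n$, then bound the right derivative of $W_2^2(S^\Psi_t\rho_\tau^n,\rho_\tau^{n-1})$ at $t=0$ using the E.V.I.\ \eqref{eq:evi}. The paper applies the E.V.I.\ directly at $t=0$ without comment; you justify that step by integrating \eqref{eq:evi} on $(\varepsilon,h)$ and using the lower semicontinuity of $\Psi$.
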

\begin{cor}\label{flowintercor}
    Under the assumptions of Lemma \ref{flowinterlem}, let $k$-flow $S^\Psi$ be such that for every $n\in\N$, the curves $t\mapsto S^\Psi_t\rho_\tau^n$ lies in $L^\gamma(\R^d)$, where it is differentiable for every $t>0$ and continuous at $t=0$. Moreover, let $\mathfrak{R}:\calP_2(\R^d)\to (-\infty,\infty]$ satisfy
    \begin{equation*}
        \liminf_{s\downarrow 0}\left(-\frac{d}{dt} \calF(S^\Psi_t \rho_\tau^n)\Big|_{t=s}\right)\ge \mathfrak{R}(\rho_\tau^n).
    \end{equation*}
    Then the following two estimates hold:
    \begin{align*}
        & \Psi(\rho_\tau^{n-1})-\Psi(\rho_\tau^n)\ge \tau\mathfrak{R}(\rho_\tau^n)+\frac{k}{2}W_2^2(\rho_\tau^n,\rho_\tau^{n-1})\quad \text{for every }\ n\in \N, \\
        & \Psi(\rho_\tau^N)\le \Psi(\rho_0)-\tau\sum_{n=1}^N \mathfrak{R}(\rho_\tau^n)+\tau\max(0,-k)\calF(\rho_0)\quad \text{for every }\ N\in\N.
    \end{align*}
\end{cor}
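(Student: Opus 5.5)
The statement just before this point is Corollary \ref{flowintercor}. Its first estimate follows from Lemma \ref{flowinterlem} once we show $\mathsf{D}^\Psi\calF(\rho_\tau^n)\ge \mathfrak{R}(\rho_\tau^n)$. The second estimate then comes from summing the first one in $n$ and controlling the term involving $k$.

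\textbf{Step 1: the lower bound on the dissipation.}
Because $t\mapsto \calF(S^\Psi_t\rho_\tau^n)$ is differentiable for $t>0$ and continuous at $t=0$, the mean value theorem (or the fundamental theorem of calculus on $[\epsilon,s]$, followed by $\epsilon\downarrow0$) gives
\[
\frac{1}{s}\bigl(\calF(\rho_\tau^n)-\calF(S^\Psi_s\rho_\tau^n)\bigr)=-\frac{d}{dt}\calF(S^\Psi_t\rho_\tau^n)\Big|_{t=\xi_s}
\]
for some $\xi_s\in(0,s)$. As $s\downarrow0$ we have $\xi_s\downarrow0$. Taking the $\limsup$ of the left-hand side, and bounding it below by the $\liminf$ of the derivative along $\xi_s$, the hypothesis yields $\mathsf{D}^\Psi\calF(\rho_\tau^n)\ge \mathfrak{R}(\rho_\tau^n)$. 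A subtle point is that the mean value theorem needs a real-valued function that is continuous on $[0,s]$ and differentiable on $(0,s)$; the assumptions provide exactly this.

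\textbf{Step 2: the first estimate.}
Inserting the bound from Step 1 into Lemma \ref{flowinterlem} gives the first estimate directly.

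\textbf{Step 3: summing to get the second estimate.}
Summing the first estimate over $n=1,\dots,N$ telescopes to
\[
\Psi(\rho_\tau^N)\le\Psi(\rho_0)-\tau\sum_{n=1}^N\mathfrak{R}(\rho_\tau^n)-\frac{k}{2}\sum_{n=1}^N W_2^2(\rho_\tau^n,\rho_\tau^{n-1}).
\]
If $k\ge0$, the last term is nonpositive and can be dropped. If $k<0$, we use the JKO energy bound already derived in the proof of Theorem \ref{thm:conv_sol}, namely \eqref{eq:proof_prop_2}:
\[
\sum_{n} W_2^2(\rho_\tau^n,\rho_\tau^{n-1})\le 2\tau\,\calF(\rho_0),
\]
which relies on $\calF\ge0$. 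This gives the bound $-\tfrac{k}{2}\cdot 2\tau\calF(\rho_0)=\tau\max(0,-k)\calF(\rho_0)$.

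\textbf{Main obstacle.}
The main difficulty is Step 1, specifically passing from the one-sided derivative information to the $\limsup$ difference quotient defining $\mathsf{D}^\Psi\calF$ when $\calF(S^\Psi_t\rho)$ may be $+\infty$. I would first check finiteness using $\calF(\rho_\tau^n)<\infty$ and the continuity at $0$. Then I would apply the integral form $\calF(\rho)-\calF(S_s\rho)=\int_0^s(-\tfrac{d}{dt}\calF)\,dt$, which avoids the mean value theorem altogether: for any $\epsilon>0$ and $s$ small, the integrand is at least $\mathfrak{R}-\epsilon$. The single-species setting of the appendix transfers verbatim to the product space, with $\calW_2$ in place of $W_2$.
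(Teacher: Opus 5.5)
Your argument is correct and is the standard proof of this flow-interchange corollary. The paper quotes it in the appendix without proof, but your mean-value-theorem step giving $\mathsf{D}^\Psi\calF(\rho_\tau^n)\ge\mathfrak{R}(\rho_\tau^n)$ is the same one the paper uses in the proof of Proposition~\ref{prop:est_auxiliary}, and the second estimate follows, as you say, by telescoping plus $\sum_{n}W_2^2(\rho_\tau^n,\rho_\tau^{n-1})\le 2\tau\calF(\rho_0)$, which does need $\calF\ge0$. One small remark: the integral-form alternative in your last paragraph does not really avoid the mean value theorem, because for a merely pointwise differentiable function the fundamental theorem of calculus needs integrability of $\frac{d}{dt}\calF(S^\Psi_t\rho_\tau^n)$, and establishing that requires an MVT-based monotonicity argument, so Step~1 as written is the cleaner route.
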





\begin{thebibliography}{10}

\bibitem{adams1999function}
D.~R. Adams and L.~I. Hedberg.
\newblock {\em Function Spaces and Potential Theory}.
\newblock Grundlehren der mathematischen Wissenschaften. Springer Berlin
  Heidelberg, 1999.

\bibitem{ags}
L.~Ambrosio, N.~Gigli, and G.~Savare.
\newblock {\em Gradient Flows: In Metric Spaces and in the Space of Probability
  Measures}.
\newblock Lectures in Mathematics. ETH Z{\"u}rich. Birkh{\"a}user Basel, 2005.

\bibitem{bahouri}
H.~Bahouri, J.~Y. Chemin, and R.~Danchin.
\newblock {\em {F}ourier Analysis and Nonlinear Partial Differential
  Equations}.
\newblock Grundlehren der mathematischen Wissenschaften. Springer Berlin
  Heidelberg, 2011.

\bibitem{bertozzi1}
A.~L. Bertozzi and J.~Brandman.
\newblock Finite-time blow-up of {$L^\infty$}-weak solutions of an aggregation
  equation.
\newblock {\em Commun. Math. Sci.}, 8(1):45--65, 2010.

\bibitem{bertozzi2}
A.~L. Bertozzi, J.~A. Carrillo, and T.~Laurent.
\newblock Blow-up in multidimensional aggregation equations with mildly
  singular interaction kernels.
\newblock {\em Nonlinearity}, 22(3):683--710, 2009.

\bibitem{bertozzi3}
A.~L. Bertozzi and T.~Laurent.
\newblock The behavior of solutions of multidimensional aggregation equations
  with mildly singular interaction kernels.
\newblock {\em Chin. Ann. Math. Ser. B}, 30(5):463--482, 2009.

\bibitem{bertozzi4}
A.~L. Bertozzi, T.~Laurent, and J.~Rosado.
\newblock {$L^p$} theory for the multidimensional aggregation equation.
\newblock {\em Comm. Pure Appl. Math.}, 64(1):45--83, 2011.

\bibitem{manev_1}
A.~V. Bobylev, P.~Dukes, R.~Illner, and H.~D. Victory.
\newblock On {V}lasov–{M}anev equations. I: Foundations, properties, and
  nonglobal existence.
\newblock {\em Journal of Statistical Physics}, 88(3):885--911, 1997.

\bibitem{manev_2}
A.~V. Bobylev, P.~Dukes, R.~Illner, and H.~D.~and Victory.
\newblock On {V}lasov–{M}anev equations, II: Local existence and uniqueness.
\newblock {\em Journal of Statistical Physics}, 91(3):625--654, 1998.

\bibitem{boi}
S.~Boi, V.~Capasso, and D.~Morale.
\newblock Modeling the aggregative behavior of ants of the species {\it
  {p}olyergus rufescens}.
\newblock {\em Nonlinear Anal. Real World Appl.}, 1(1):163--176, 2000.
\newblock Spatial heterogeneity in ecological models (Alcal{\'a} de Henares,
  1998).

\bibitem{caff_serf_vazq_reg_sol_pm}
L.~Caffarelli, F.~Soria, and J.~L. Vázquez.
\newblock Regularity of solutions of the fractional porous medium flow.
\newblock {\em Journal of the European Mathematical Society},
  015(5):1701--1746, 2013.

\bibitem{caff_vazq_nonlinear_pm}
L.~{Caffarelli} and J.~L. {Vazquez}.
\newblock Nonlinear porous medium flow with fractional potential pressure.
\newblock {\em Archive for Rational Mechanics and Analysis}, 202(2):537--565,
  November 2011.

\bibitem{CaCho21}
J.~A. Carrillo and Y.-P. Choi.
\newblock Mean-field limits: From particle descriptions to macroscopic
  equations.
\newblock {\em Archive for Rational Mechanics and Analysis}, 241(3):1529--1573,
  2021.

\bibitem{Carrillo2014}
J.~A. Carrillo, Y.-P. Choi, and M.~Hauray.
\newblock The derivation of swarming models: Mean-field limit and {W}asserstein
  distances.
\newblock In Adrian Muntean and Federico Toschi, editors, {\em Collective
  Dynamics from Bacteria to Crowds: An Excursion Through Modeling, Analysis and
  Simulation}, pages 1--46. Springer Vienna, Vienna, 2014.

\bibitem{cdfls}
J.~A. Carrillo, M.~Di~Francesco, A.~Figalli, T.~Laurent, and D.~Slepčev.
\newblock Global-in-time weak measure solutions and finite-time aggregation for
  nonlocal interaction equations.
\newblock {\em Duke Mathematical Journal}, 156, 02 2011.

\bibitem{choi_fagioli_iorio}
Y.-P. Choi, S.~Fagioli, and V.~Iorio.
\newblock Small inertia limit for coupled kinetic swarming models.
\newblock {\em Journal of Nonlinear Science}, 35(39), 2025.

\bibitem{choi_jeong_fractional}
Y.-P. Choi and I.-J. Jeong.
\newblock Classical solutions for fractional porous medium flow.
\newblock {\em Nonlinear Analysis}, 210:112393, 2021.

\bibitem{choi_j}
Y.-P. Choi and I.-J. Jeong.
\newblock Global-in-time existence of weak solutions for
  {V}lasov-{M}anev-{F}okker-{P}lanck system.
\newblock {\em Kinetic and Related Models}, 16(1):41--53, 2023.

\bibitem{difrafag}
M.~Di~Francesco and S.~Fagioli.
\newblock Measure solutions for non-local interaction {PDE}s with two species.
\newblock {\em Nonlinearity}, 26:2777, 2013.

\bibitem{difra_iorio}
M.~Di~Francesco and V.~Iorio.
\newblock A system of continuity equations with nonlocal interactions of
  {M}orse type.
\newblock {\em Communications on Pure and Applied Analysis}, 24(8):1381--1405,
  2025.

\bibitem{difra_i_schm}
M.~Di~Francesco, V.~Iorio, and M.~Schmidtchen.
\newblock The approximation of the quadratic porous medium equation via
  nonlocal interacting particles subject to repulsive {M}orse potential.
\newblock {\em SIAM Journal on Mathematical Analysis}, 57(5):4631--4679, 2025.

\bibitem{HauJab}
M.~Hauray and P.-E. Jabin.
\newblock {N}-particles approximation of the {V}lasov equations with singular
  potential.
\newblock {\em Archive for Rational Mechanics and Analysis}, 183(3):489--524,
  2007.

\bibitem{mainini_volzone}
Y.~Huang, E.~Mainini, J.~L. Vázquez, and B.~Volzone.
\newblock Nonlinear aggregation-diffusion equations with {R}iesz potentials.
\newblock {\em Journal of Functional Analysis}, 287(2):110465, 2024.

\bibitem{jko}
R.~Jordan, D.~Kinderlehrer, and F.~Otto.
\newblock The variational formulation of the {F}okker-{P}lanck equation.
\newblock {\em SIAM J. Math. Anal.}, 29(1):1--17, 1998.

\bibitem{lis_main_seg}
S.~Lisini, E.~Mainini, and A.~Segatti.
\newblock A gradient flow approach to the porous medium equation with
  fractional pressure.
\newblock {\em Archive for Rational Mechanics and Analysis}, 227:567--606,
  2018.

\bibitem{matthes2009family}
D.~Matthes, R.~J. McCann, and G.~Savaré.
\newblock A family of nonlinear fourth order equations of gradient flow type.
\newblock {\em Communications in Partial Differential Equations},
  34(11):1352--1397, 2009.

\bibitem{mogilner}
A.~Mogilner and L.~Edelstein-Keshet.
\newblock A non-local model for a swarm.
\newblock {\em J. Math. Biol.}, 38(6):534--570, 1999.

\bibitem{Golse2016}
A.~Muntean, J.~Rademacher, and A.~Zagaris.
\newblock {\em On the Dynamics of Large Particle Systems in the Mean Field
  Limit}.
\newblock Springer International Publishing, Cham, 2016.

\bibitem{okubo}
A.~Okubo and S.~A. Levin.
\newblock {\em Diffusion and ecological problems: modern perspectives},
  volume~14 of {\em Interdisciplinary Applied Mathematics}.
\newblock Springer-Verlag, New York, second edition, 2001.

\bibitem{santambrogio_book}
F.~Santambrogio.
\newblock {\em Optimal Transport for Applied Mathematicians: Calculus of
  Variations, PDEs, and Modeling}.
\newblock Progress in Nonlinear Differential Equations and Their Applications.
  Springer International Publishing, 2015.

\bibitem{Serfaty_vazq}
S.~Serfaty and J.~L. V{\'a}zquez.
\newblock A mean field equation as limit of nonlinear diffusions with
  fractional laplacian operators.
\newblock {\em Calculus of Variations and Partial Differential Equations},
  49:1091 -- 1120, 2013.

\bibitem{topaz}
C.~M. Topaz and A.~L. Bertozzi.
\newblock Swarming patterns in a two-dimensional kinematic model for biological
  groups.
\newblock {\em SIAM J. Appl. Math.}, 65(1):152--174, 2004.

\bibitem{villani}
C.~Villani.
\newblock {\em Topics in Optimal Transportation}, volume~58 of {\em Graduate
  Studies in Mathematics}.
\newblock American Mathematical Society, Providence, RI, 2003.

\end{thebibliography}

\end{document}